\documentclass[11pt]{amsart}

\usepackage{amsmath}
\usepackage{amssymb}
\usepackage{mathrsfs}
\usepackage[all]{xy}

\newtheorem{theorem}{Theorem}[section]
\newtheorem{claim}[theorem]{Claim}

\newtheorem{lemma}[theorem]{Lemma}

\newtheorem{proposition}[theorem]{Proposition}
\newtheorem{corollary}[theorem]{Corollary}

\newtheorem*{theorem*}{Theorem}
\newtheorem*{corollary*}{Corollary}

\theoremstyle{definition}
\newtheorem{definition}[theorem]{Definition}

\newtheorem{question}[theorem]{Question}

\theoremstyle{remark}
\newtheorem{remark}[theorem]{Remark}

\newtheorem{fact}[theorem]{Fact}

\def\l{{\langle}}
\def\r{{\rangle}}

\newcount\skewfactor
\def\mathunderaccent#1#2 {\let\theaccent#1\skewfactor#2
\mathpalette\putaccentunder}
\def\putaccentunder#1#2{\oalign{$#1#2$\crcr\hidewidth
\vbox to.2ex{\hbox{$#1\skew\skewfactor\theaccent{}$}\vss}\hidewidth}}

\def\smallbox#1{\leavevmode\thinspace\hbox{\vrule\vtop{\vbox
   {\hrule\kern1pt\hbox{\vphantom{\tt/}\thinspace{\tt#1}\thinspace}}
   \kern1pt\hrule}\vrule}\thinspace}

\DeclareMathOperator{\rng}{rng}
\newcommand{\cf}{{\rm cf}}

\title{Scales in the point spectrum}
\author{Tom Benhamou}
\thanks{The research of the first author was supported by the National Science Foundation under Grant
No. DMS-2346680}
\address[Benhamou]{Department of Mathematics, Rutgers University, New Brunswick, NJ, USA}
\email{tom.benhamou@rutgers.edu}

\begin{document}
\begin{abstract}
     We study the Point/Tukey spectrum of a general directed set using PCF theoretic tools and uncover basic connections between the theories. In particular, we prove that if the supremum of the Tukey spectrum is singular, then its cofinality must also be a member of the Tukey spectrum.
\end{abstract}
\maketitle
\section{Introduction}
Consider a directed set $\mathbb{P}=(P,\leq_P)$.  A prominent quantity which is attached to $\mathbb{P}$ is the \textit{character of $\mathbb{P}$} (also known as the \textit{cofinality} of $\mathbb{P}$) which is defined as follows:
$$\chi(\mathbb{P})=\min\{|\mathcal{B}|\mid \mathcal{B}\subseteq P, \ \forall p\in P\exists b\in \mathcal{B}, \ p\leq_Pb\}$$
A set $\mathcal{B}\subseteq P$ satisfying the above condition: $\forall p\in P\exists b\in\mathcal{B},p\leq_P b$ is called \textit{a $\mathbb{P}$-cofinal family}\footnote{We omit $\mathbb{P}$ when there is no risk of ambiguity.}. Problems which involved ordered sets often reduce to the study of cofinal families and characters. For example in Topology, first countable spaces are those topological spaces such that the filter of open neighborhoods $\mathcal{N}(x)$ of every point $x$ admits a countable filter bases, which is translated to requiring that $\chi(\mathcal{N}(x),\supseteq)\leq\omega$. More generally, the study of $(\mathcal{N}(x),\supseteq)$-cofinal families is intrinsically linked to the type of nets needed in order to characterize continuity in terms of sequencial-like continuity in the context of Moore-Smith convergence. The main tool to compare the complexity of cofinal families is the \textit{Tukey order} \cite{Tukey40} which is defined as follows: given two directed sets $\mathbb{P}=(P,\leq_P)$ and $\mathbb{Q}=(Q,\leq_Q)$ we say that $\mathbb{P}\leq_T\mathbb{Q}$ if there is a cofinal map $f:Q\to P$, that is, every $\mathbb{Q}$-cofinal family is mapped by $f$ to a $\mathbb{P}$-cofinal family. It is easy to see that the character is an invariant of the Tukey order, namely, if $\mathbb{P}\leq\mathbb{Q}$ then $\chi(\mathbb{P})\leq\chi(\mathbb{Q})$. We say that $\mathbb{P}\equiv_T\mathbb{Q}$ if $\mathbb{P}\leq_T\mathbb{Q}$ and $\mathbb{Q}\leq_T\mathbb{P}$. We call the $\equiv_T$-equivalent class of a poset $\mathbb{P}$ the \textit{Tukey type} or \textit{cofinal type} of $\mathbb{P}$. The Tukey type of poset of the form $(\mathcal{U},\supseteq)$, where $\mathcal{U}$ is an ultrafilters has been  a fruitful line of research \cite{Milovich08,Dobrinen/Todorcevic11,Raghavan/Todorcevic12} which is intensively studied in recent days \cite{Benhamou/Dobrinen24,BENHAMOU_DOBRINEN_2024,CancinoZaplatal,Benhamou_Goldberg_2025}. Yet, some fundamental questions such as the \textit{kunen problem} regarding the possible values of the character of an ultrafilter over $\omega_1$ remain open. When considering ultrafilters, a natural perspective of certain problems is the through the ultrapower construction, which encapsulates many of the combinatorial properties of the ultrafilter and provides a clear vision of certain properties of the ultrafilter e.g. completness, regularity, indecomposability, $p$-point, $q$-point etc. Given an ultrafilter $\mathcal{U}$ over a set $X$, the \textit{ultrapower of $V$ by $\mathcal{U}$} is denoted by $M_{\mathcal{U}}=V^X/\mathcal{U}$ and consist of all $=_{\mathcal{U}}$-equivalence classes  $[f]_{\mathcal{U}}$ of function $f:X\to V$, and the \textit{ultrapower embedding} $j_{\mathcal{U}}:V\to M_{\mathcal{U}}$ is defined by $j_{\mathcal{U}}(x)=[c_x]_{\mathcal{U}}$, where $c_x$ is the constant function with value $x$.
One of the motivating questions of this paper is the following:
\begin{question}\label{Queation1}
    What is the meaning of $\chi(\mathcal{U})$ in terms of $(M_{\mathcal{U}},j_{\mathcal{U}})$?
\end{question} To be more precise, the question is whether there is a reasonable way of expressing the quantity $\chi(\mathcal{U})$ via objects related to the ultrapower $(M_{\mathcal{U}},j_{\mathcal{U}})$. Heuristically, such a characterization ought to exists because of the following reason: recall that two ultrafilters $\mathcal{U},\mathcal{W}$ over $X,Y$ respectively are called \textit{Rudin-Keisler} equivalent and denoted by $\mathcal{U}\simeq _{RK}W$, if there $\varphi:X\to Y$ such that for some $A\in \mathcal{U}$, $\varphi\restriction A$ is one-to-one, and $\mathcal{W}=\{B\subseteq Y\mid \varphi^{-1}[B]\in \mathcal{U}\}$. It is well known that $\mathcal{U}\simeq_{RK}\mathcal{W}$ if and only if there is an isomorphism $k:M_{\mathcal{U}}\simeq M_{\mathcal{W}}$ such that $j_{\mathcal{W}}=k\circ j_{\mathcal{U}}$. Since Rudin-Keisler equivalence implies Tukey-equivalence (see for example \cite{DobrinenTukeySurvey15}), and since $\chi(\mathcal{U})$ is an invariant of the Tukey-equivalence, it follows that $\chi(\mathcal{U})$ indeed only depend on the $RK$-equivalent class of $\mathcal{U}$, or equivalently, the isomorphism class of $(M_{\mathcal{U}},j_{\mathcal{U}})$. In this paper, we proved evidence that such a characterization might relate to the notion of the \textit{Tukey spectrum} which is defined for an arbitrary directed set $\mathbb{P}$.

While determining the Tukey type of a directed set $\mathbb{P}$ might be considered a hard task (or even worst, its value can be independent of $\mathsf{ZFC}$), there are several yardstick cofinal types which are used as measures of cofinal complexity. These have been called in the past \textit{standard Tukey sets}  \cite{Isbell65,Schmidt55} and are defined to be posets of the form $([\lambda]^{<\mu},\subseteq)$ where $\mu\leq\lambda$ are cardinals and $[\lambda]^{<\mu}=\{X\subseteq\lambda\mid |X|<\mu\}$. It is well known that  if $\chi([\lambda]^{<\mu},\subseteq)=\lambda$. Then the following are equivalent:
    \begin{enumerate}
        \item $([\lambda]^{<\mu},\subseteq)\leq_T\mathbb{P}$.
        \item for any ${<}\mu$-directed poset $\mathbb{Q}$, $\mathbb{Q}\leq_T \mathbb{P}$.
    \end{enumerate}
In this paper we will mostely be interested in stadard Tukey sets in the case where $\lambda=\mu$ is a regular cardinal, in which case $([\lambda]^{<\lambda},\subseteq)\equiv_T (\lambda,<)$, where $<$ induces the usual well-order of $\lambda$ (i.e. $<$ is just $\in$). Hence the following sets a measure of cofinal complexity:
\begin{definition}
    The Tukey (or point) spectrum of $\mathbb{P}$ is defined by $$Sp_T(\mathbb{P})=\{\lambda\in Reg\mid \lambda\leq_T \mathbb{P}\}$$
\end{definition}
The higher $\mathbb{P}$  is in the Tukey order, the reacher is its Tukey spectrum. This notion has been studied in several occasions \cite{Isbell65,Gartside/Mamatelashvili15,TomCohesive,Gilton2024,MamatelashviliThesis}. The basic relation between $\chi(\mathbb{P})$ and $Sp_T(\mathbb{P})$ is that $\sup(Sp_T(\mathbb{P}))\leq \chi(\mathbb{P})$, while $cf(\chi(\mathbb{P}))\in Sp_T(\mathbb{P})$.
As a corollary we immediately see that if $\chi(\mathbb{P})$ is a regular cardinal, then $\chi(\mathbb{P})=\sup(Sp_T(\mathbb{P}))$.
The reason that the Tukey spectrum is helpful here is due to \cite{TomCohesive}, where the following characterization of $Sp_T(\mathcal{U})$ in terms of $(M_{\mathcal{U}},j_{\mathcal{U}})$ was given:
\begin{theorem}
    Let $\mathcal{U}$ be an ultrafilter and $\lambda$ be a regular cardinal. Then the following are equivalent:
    \begin{enumerate}
        \item $\lambda\in Sp_T(\mathcal{U})$.
        \item There is $X\in M_{\mathcal{U}}$ such that:
        \begin{enumerate}
            \item $X$ covers $j_{\mathcal{U}}''\lambda$ i.e. for every $\alpha<\lambda$, $M_{\mathcal{U}}\models j_{\mathcal{U}}(\alpha)\in X$.
            \item $X$ is $\lambda$-thin i.e. for every $I\in [\lambda]^\lambda$, $M_{\mathcal{U}}\models j_{\mathcal{U}}(I)\not\subseteq X$.
        \end{enumerate}
    \end{enumerate}
\end{theorem}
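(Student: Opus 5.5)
We argue both directions through the standard unbounded-map characterization: for regular $\lambda$, $\lambda\leq_T(\mathcal{U},\supseteq)$ iff there is a sequence $\langle A_\alpha\mid \alpha<\lambda\rangle$ of sets in $\mathcal{U}$ such that for every $I\in[\lambda]^\lambda$, $\bigcap_{\alpha\in I}A_\alpha\notin\mathcal{U}$. To see this, recall that $\lambda\leq_T\mathbb{P}$ iff there is a map $\lambda\to P$ sending unbounded sets to unbounded sets. In $(\mathcal{U},\supseteq)$, a family is bounded iff its intersection lies in $\mathcal{U}$. Since $\lambda$ is regular, the unbounded subsets of $\lambda$ are exactly the sets in $[\lambda]^\lambda$. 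This reduces the theorem to translating the existence of such a sequence into the ultrapower.

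\textbf{(1)$\Rightarrow$(2).} Fix such a sequence $\langle A_\alpha\mid\alpha<\lambda\rangle$ and define $f(x)=\{\alpha<\lambda\mid x\in A_\alpha\}$ for $x\in X$; let $X^*=[f]_{\mathcal{U}}$. By \L o\'s, $j_{\mathcal{U}}(\alpha)\in X^*$ iff $\{x\mid \alpha\in f(x)\}=A_\alpha\in\mathcal{U}$, which holds, so $X^*$ covers $j_{\mathcal{U}}''\lambda$. For thinness, let $I\in[\lambda]^\lambda$. Then $j_{\mathcal{U}}(I)\subseteq X^*$ iff $\{x\mid I\subseteq f(x)\}\in\mathcal{U}$, and $\{x\mid I\subseteq f(x)\}=\bigcap_{\alpha\in I}A_\alpha\notin\mathcal{U}$. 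Hence $j_{\mathcal{U}}(I)\not\subseteq X^*$.

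\textbf{(2)$\Rightarrow$(1).} Given $X^*=[f]_{\mathcal{U}}$ covering $j_{\mathcal{U}}''\lambda$ and $\lambda$-thin, set $A_\alpha=\{x\mid \alpha\in f(x)\}$. Each $A_\alpha$ lies in $\mathcal{U}$ by the covering property and \L o\'s. For $I\in[\lambda]^\lambda$ we have $\bigcap_{\alpha\in I}A_\alpha=\{x\mid I\subseteq f(x)\}$, and this set lies in $\mathcal{U}$ iff $M_{\mathcal{U}}\models j_{\mathcal{U}}(I)\subseteq X^*$. By thinness it does not, so $\bigcap_{\alpha\in I}A_\alpha\notin\mathcal{U}$. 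The map $\alpha\mapsto A_\alpha$ is therefore unbounded-to-unbounded, and $\lambda\leq_T\mathcal{U}$.

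The main point requiring care is the characterization used at the start. One direction of it uses that a Tukey map can be taken to send unbounded sets to unbounded sets. The other direction uses that, for regular $\lambda$, the unbounded subsets of $(\lambda,<)$ are exactly the sets of size $\lambda$. This is where regularity of $\lambda$ enters.
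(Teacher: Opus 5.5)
Your proof is correct and is essentially the paper's own argument. The paper proves the general $\mathcal{I}$-cohesive version in Section 3 using the same map $x\mapsto\{\alpha<\lambda\mid x\in A_\alpha\}$, \L o\'{s}'s theorem, and the identity $\bigcap_{\alpha\in I}A_\alpha=\{x\mid I\subseteq f(x)\}$. The present statement is the case $\mathcal{I}=\mathcal{J}^\lambda_{bd}$, which is exactly your reduction of $\lambda\leq_T\mathcal{U}$ to a $\lambda$-sequence in $\mathcal{U}$ none of whose $\lambda$-sized subfamilies has intersection in $\mathcal{U}$.
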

\begin{corollary}
    If $\chi(\mathcal{U})=\sup(Sp_T(\mathcal{U}))$, then $\chi(\mathcal{U})$ is the the supremum of all regular $\lambda$'s for which $M_{\mathcal{U}}$ has a $\lambda$-this cover for $j_{\mathcal{U}}''\lambda$.
\end{corollary}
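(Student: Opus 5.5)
The corollary is essentially a direct translation of the hypothesis through the preceding theorem, so I will argue by unpacking definitions. Let $S$ be the set of regular cardinals $\lambda$ such that there is $X\in M_{\mathcal{U}}$ which covers $j_{\mathcal{U}}''\lambda$ and is $\lambda$-thin; this is exactly what ``$M_{\mathcal{U}}$ has a $\lambda$-thin cover for $j_{\mathcal{U}}''\lambda$'' means. The first step is to observe that, by the theorem just stated, applied to each regular $\lambda$ separately, a regular $\lambda$ belongs to $Sp_T(\mathcal{U})$ if and only if conditions (a) and (b) hold for some $X\in M_{\mathcal{U}}$. Hence $S=Sp_T(\mathcal{U})$ as sets of cardinals. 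Both sets consist only of regular cardinals by definition, so there is no discrepancy at singular cardinals.

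The second step is to take suprema: $\sup S=\sup(Sp_T(\mathcal{U}))$. Combining this with the hypothesis $\chi(\mathcal{U})=\sup(Sp_T(\mathcal{U}))$ gives $\chi(\mathcal{U})=\sup S$, which is precisely the claim.

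There is no real obstacle. The only point to check is that the theorem is applied to the poset $(\mathcal{U},\supseteq)$, which is what $Sp_T(\mathcal{U})$ refers to; the theorem is stated in exactly that setting. It is also worth remarking, as in the introduction, that the hypothesis holds automatically whenever $\chi(\mathcal{U})$ is regular, since $\cf(\chi(\mathcal{U}))\in Sp_T(\mathcal{U})$ and $\sup(Sp_T(\mathcal{U}))\leq\chi(\mathcal{U})$. In that case the corollary yields an unconditional answer to Question~\ref{Queation1}.
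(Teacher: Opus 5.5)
Your argument is correct and is the intended one. The preceding theorem shows that, for each regular $\lambda$, $\lambda\in Sp_T(\mathcal{U})$ if and only if $M_{\mathcal{U}}$ contains a $\lambda$-thin cover of $j_{\mathcal{U}}''\lambda$, so the two sets of cardinals coincide and the hypothesis $\chi(\mathcal{U})=\sup(Sp_T(\mathcal{U}))$ transfers directly; the paper gives no separate proof and treats the corollary as immediate from that theorem in this way.
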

In particular if $\chi(\mathcal{U})$ is regular then we obtain an ultrapower characterization of it in terms of $(M_{\mathcal{U}},j_{\mathcal{U}})$. This brings us to our second central question of this paper:
\begin{question}\label{question2}
    Is it $\mathsf{ZFC}$-provable that $\sup(Sp_T(\mathbb{P}))=\chi(\mathbb{P})$? can we prove it specifically for ultrafilters?
\end{question}
To answer this question, we are left to deal with the case where $\chi(\mathbb{P})$ is singular. Isbell \cite{Isbell65} gave a positive answer under cardinal arithmetic assumptions:
\begin{theorem}[Isbell]\label{Isbells Thm}
    Suppose that every singular $\lambda\leq\chi(\mathbb{P})$ is a strong limit cardinal. Then $\chi(\mathbb{P})=\sup(Sp_T(\mathbb{P}))$. 
\end{theorem}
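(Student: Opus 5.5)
We argue by contradiction. Let $\kappa=\chi(\mathbb{P})$ and suppose $\sup(Sp_T(\mathbb{P}))<\kappa$. By the remarks preceding the theorem, $\cf(\kappa)\in Sp_T(\mathbb{P})$; if $\kappa$ were regular we would already have $\kappa\in Sp_T(\mathbb{P})$, so $\kappa$ is singular, and by hypothesis a strong limit. Fix a regular cardinal $\theta<\kappa$ with $\sup(Sp_T(\mathbb{P}))<\theta$, and fix a cofinal family $\mathcal{B}\subseteq P$ with $|\mathcal{B}|=\kappa$.

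The key tool is the following characterization. For a regular $\lambda$, $\lambda\not\leq_T\mathbb{P}$ if and only if every subset of $P$ of size $\lambda$ contains a subset of size $\lambda$ that is bounded in $\mathbb{P}$. Indeed, a map $f:\lambda\to P$ is Tukey (sends unbounded sets to unbounded sets) exactly when every $\lambda$-sized subset of its range is unbounded. Hence for every regular $\lambda\in(\sup Sp_T(\mathbb{P}),\kappa)$, every $\lambda$-sized subset of $\mathcal{B}$ contains a bounded $\lambda$-sized subset.

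The main step is to shrink $\mathcal{B}$ to a cofinal family of size $<\kappa$, which contradicts $\chi(\mathbb{P})=\kappa$. Write $\mathcal{B}=\bigcup_{i<\cf(\kappa)}\mathcal{B}_i$ as an increasing union with $|\mathcal{B}_i|=\kappa_i<\kappa$. Using the strong limit hypothesis, $2^{\kappa_i}<\kappa$. I would try to show that every set $A\subseteq P$ with $|A|<\kappa$ is covered by fewer than $\kappa$ elements, where "covered" means each element of $A$ lies below one of them. The first idea is a closure/elementary submodel argument. Take $M\prec H(\chi)$ with $|M|=\mu<\kappa$, $\mu^{\theta}=\mu$ (available since $\kappa$ is strong limit), $M$ closed under $\theta$-sequences, and $\mathcal{B}_i\subseteq M$. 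For a given $p\in P$, consider the set $\{b\in M\cap\mathcal{B} : b\not\geq p\}$. We want to show that some $b\in M$ lies above $p$. Iterating the bounded-subset principle inside $M$, any $\theta$-sized subset of $M$ has a bound, and by elementarity and closure such a bound can be found in $M$. So $M\cap P$ is ${<}\theta^+$-directed-ish relative to its own subsets, and $M\cap P$ generates a family that bounds every subset of itself of size $\leq\theta$. One must then show that $M\cap P$ is cofinal in $P$, i.e.\ that the $\kappa$ many elements of $\mathcal{B}$ are absorbed. To do this, take $b\in\mathcal{B}$ not bounded by $M\cap P$ and build $\theta$ many such elements $\langle b_\xi:\xi<\theta\rangle$ forming an unbounded set, which contradicts $\theta\notin Sp_T(\mathbb{P})$.

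The hardest step, where I expect the real work to lie, is making this last construction succeed. I would aim to use the counting $2^{\kappa_i}<\kappa$ to pass from "each $\lambda$-sized set has a bounded $\lambda$-sized subset" for all regular $\lambda$ in the interval to "every set of size $<\kappa$ is covered by fewer than $\kappa$ elements". The route is an induction on the size $\lambda$ of the set. At a regular $\lambda$, we repeatedly peel off bounded $\lambda$-sized pieces; the leftover has size $<\lambda$, and after $\lambda$ steps the pieces are covered by $\lambda$ bounds. At a singular $\lambda$, we use cofinality together with the inductive hypothesis. Once we have covers of size $f(\lambda)<\kappa$ for each $\lambda<\kappa$, we still need a single family of size $<\kappa$ covering all of $\mathcal{B}$. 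I would take $\mathcal{B}_i$ to grow along $\cf(\kappa)$ and exploit that $\cf(\kappa)\in Sp_T(\mathbb{P})$ only gives a lower bound. The contradiction should instead come from choosing $\theta>\cf(\kappa)$ and noting that the covers of the $\mathcal{B}_i$ stabilize below $\kappa$ because $\kappa$ is a strong limit, so that a union over $\cf(\kappa)$ pieces of covers of size $\le 2^{\kappa_{i_0}}$ collapses to a uniform bound. This stabilization is the delicate point I expect to be the main obstacle.
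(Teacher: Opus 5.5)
\textbf{There is a genuine gap.} The central step (a cofinal family of size ${<}\kappa$) is never proved, and the sub-steps you sketch toward it fail.
\begin{enumerate}
\item $\theta\notin Sp_T(\mathbb{P})$ only says that every $\theta$-sized set \emph{contains} a bounded $\theta$-sized subset. It does not make every $\theta$-sized subset of $M$ bounded. Indeed, $\cf(\kappa)\in Sp_T(\mathbb{P})$ with $\cf(\kappa)<\theta$ already gives unbounded sets of size $\cf(\kappa)$.
\item Producing an unbounded family $\langle b_\xi:\xi<\theta\rangle$ contradicts nothing. To contradict $\theta\notin Sp_T(\mathbb{P})$ you would need all of its $\theta$-sized subfamilies to be unbounded, and your construction provides no such thing.
\item The covering induction is vacuous. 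Peeling bounded $\lambda$-sized pieces off a $\lambda$-sized set covers it by $\lambda$ elements, which the set itself already does. So $f(\lambda)=\lambda$, and the union of covers of the $\mathcal{B}_i$ has size $\kappa$.
\item The hoped-for stabilization is provably impossible. Let $\mathrm{cov}(A)$ be the least size of $C\subseteq P$ such that every element of $A$ lies below a member of $C$. If $\mathrm{cov}(\mathcal{B}_i)\le\nu<\kappa$ for all $i$, the union of these covers would be a cofinal family of size $\nu\cdot\cf(\kappa)<\kappa$. Hence $\mathrm{cov}(\mathcal{B}_i)$ must be unbounded in $\kappa$. This is a fact to exploit, not a route to a contradiction.
\end{enumerate}
Note also that your plan never uses the hypothesis on singular cardinals strictly below $\chi(\mathbb{P})$. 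That is a sign the argument should be an induction on the character.

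\textbf{The missing idea} is to transfer the large covering numbers into the spectrum through small directed sets that are Tukey below $\mathbb{P}$. The paper cites this theorem without proof, but the tool it attributes to Isbell in Section 3 does exactly this.
\begin{enumerate}
\item Let $\mathbb{Q}_i=(J_{bd}(\mathcal{B}_i,\mathbb{P}),\subseteq)$. Then $\mathbb{Q}_i\leq_T\mathbb{P}$, so $Sp_T(\mathbb{Q}_i)\subseteq Sp_T(\mathbb{P})$.
\item Choose a bound for each member of a cofinal subfamily of $\mathbb{Q}_i$. Looking at singletons, these bounds cover $\mathcal{B}_i$, so $\chi(\mathbb{Q}_i)\ge\mathrm{cov}(\mathcal{B}_i)$.
\item Since $\kappa$ is a strong limit, $\chi(\mathbb{Q}_i)\le|\mathbb{Q}_i|\le 2^{|\mathcal{B}_i|}<\kappa$.
\item Now induct on $\chi(\mathbb{P})$. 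The regular case is the diagonal argument giving $\cf(\chi(\mathbb{P}))\in Sp_T(\mathbb{P})$.
\item In the singular case, the induction hypothesis applies to each $\mathbb{Q}_i$, because every singular cardinal $\le\chi(\mathbb{Q}_i)$ is a strong limit. This is exactly where the hypothesis below $\chi(\mathbb{P})$ is used. It gives $\sup Sp_T(\mathbb{Q}_i)=\chi(\mathbb{Q}_i)$.
\item The characters $\chi(\mathbb{Q}_i)$ are unbounded in $\kappa$, so $Sp_T(\mathbb{P})$ is unbounded in $\kappa=\chi(\mathbb{P})$.
\end{enumerate}
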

In this paper, without any cardinal arithmetic assumption, we provide  several $\mathsf{ZFC}$-restrictions on $\sup(Sp_T(\mathbb{P}))$. 
The proof establishes a connection to another main branch of the study of cofinalities-- PCF theory. While some connections betweent the Tukey order and PCF theory have been studied in the past by Gartside- Mamatelashvili \cite{Gartside/Mamatelashvili15}, Gilton \cite{Gilton2024} and very recently by Gartside-Gilton \cite{GarsideGilton}, the line of research pursued in this paper are quite different; we demonstrate how PCF theory can be used to obtain new $\mathsf{ZFC}$-restrictions on the structure of $Sp_T(\mathbb{P})$ for a general directed set $\mathbb{P}$. 
\begin{theorem*}
    Suppose that $\lambda$ is a limit cardinal and  $B\subseteq Sp_T(\mathbb{P})\cap\lambda$ are such that:
    \begin{enumerate}
        \item [(i)] $B$ is unbounded in $\lambda$.
        \item [(ii)] $cf(|B|)=|B|\notin Sp_T(\mathbb{P})$.
        \item [(iii)] $tcf(\prod B/\mathcal{J}^B_{bd})=\mu$.
    \end{enumerate}  
Then $\mu\in Sp_T(\mathbb{P})$.
\end{theorem*}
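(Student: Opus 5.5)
We will use the standard reformulation of membership in the spectrum. For regular $\kappa$, $\kappa\leq_T\mathbb{P}$ holds if and only if there is a family $\{p_\alpha\mid\alpha<\kappa\}\subseteq P$ such that every $q\in P$ bounds only boundedly many $p_\alpha$. Equivalently, every $X\in[\kappa]^\kappa$ has $\{p_\alpha\mid\alpha\in X\}$ unbounded in $\mathbb{P}$. For each $\theta\in B$ we fix such an unbounded-witnessing sequence $\langle p^\theta_\alpha\mid\alpha<\theta\rangle$. We also fix a scale $\langle f_\xi\mid\xi<\mu\rangle$ in $\prod B$ that is $<_{\mathcal{J}^B_{bd}}$-increasing and cofinal. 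Here $\mathcal{J}^B_{bd}$ is the ideal of bounded subsets of $B$, which is meaningful because $B$ is unbounded in $\lambda$ and has order type at least $|B|$.

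Our candidate for the $\mu$-sequence cannot be the naive $q_\xi=$ ``an upper bound of $\{p^\theta_{f_\xi(\theta)}\}_{\theta\in B}$'', since such bounds need not exist. Instead we argue by contradiction via the Tukey characterization. Suppose $\mu\notin Sp_T(\mathbb{P})$. Then for any $\mu$-sequence in $P$ there is $X\in[\mu]^\mu$ and a single $q$ bounding it. For each $\xi<\mu$ and $\theta\in B$ we use the $\theta$-sequence $p^\theta_{f_\xi(\theta)}$.

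The key step uses hypothesis (ii) to convert the failure at $|B|$ into bounds. Since $\kappa:=|B|$ is regular and $\kappa\notin Sp_T(\mathbb{P})$, any $\kappa$-indexed family in $P$ has a $\kappa$-sized subfamily with an upper bound. Enumerate $B=\{\theta_i\mid i<\kappa\}$. For fixed $\xi$, apply this to $\langle p^{\theta_i}_{f_\xi(\theta_i)}\mid i<\kappa\rangle$. We obtain an unbounded $A_\xi\subseteq B$ (of size $\kappa$, hence unbounded in $\lambda$) and $q_\xi\in P$ bounding $p^\theta_{f_\xi(\theta)}$ for all $\theta\in A_\xi$.

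Now assume toward contradiction that $\mu\notin Sp_T(\mathbb{P})$. We get $X\in[\mu]^\mu$ and $r$ with $q_\xi\leq r$ for all $\xi\in X$. For each $\theta\in B$, the set $\{\alpha<\theta\mid p^\theta_\alpha\leq r\}$ is bounded in $\theta$, say by $g(\theta)<\theta$. Then $g\in\prod B$, so some $\xi^*<\mu$ has $g<_{\mathcal{J}^B_{bd}}f_{\xi^*}$. Since $X$ is cofinal in $\mu$ and the scale is increasing mod bounded, each $\xi\in X$ above $\xi^*$ satisfies $f_\xi(\theta)>g(\theta)$ for all $\theta\in B$ beyond some bound $\beta_\xi<\lambda$. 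But $A_\xi$ is unbounded, so some $\theta\in A_\xi$ beyond $\beta_\xi$ has $p^\theta_{f_\xi(\theta)}\leq q_\xi\leq r$ with $f_\xi(\theta)>g(\theta)$. This contradicts the choice of $g$. Hence $\mu\in Sp_T(\mathbb{P})$.

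The delicate point, which we expect to be the main obstacle, is the interplay between ``$A_\xi$ unbounded'' and ``bounded exceptional set''. It requires $\mathcal{J}^B_{bd}$ to consist exactly of subsets of $B$ bounded in $\lambda$, and $A_\xi$ of size $\kappa$ to be unbounded in $\lambda$. The latter needs that every subset of $B$ of size $|B|$ is unbounded in $\lambda$, which we arrange by first thinning $B$ to order type $\kappa$. This is possible when $\mathrm{cf}(\lambda)=\kappa$. In general we must check that the tcf hypothesis transfers to the thinned set. If it does not, we instead use a $\kappa$-sized subfamily chosen as above and argue that $\mathcal{J}^B_{bd}$-positivity is what actually matters. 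We also verify the routine equivalence between $\kappa\leq_T\mathbb{P}$ and the ``every $\kappa$-subfamily unbounded'' condition for regular $\kappa$.
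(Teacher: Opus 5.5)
Your core argument is the paper's: fix witnesses $\langle p^\theta_\alpha\mid\alpha<\theta\rangle$ and a scale $\langle f_\xi\mid\xi<\mu\rangle$. Use $|B|\notin Sp_T(\mathbb{P})$ to bound $\{p^\theta_{f_\xi(\theta)}\mid\theta\in A_\xi\}$ by $q_\xi$, and refute a bounded $\mu$-subfamily by domination. Your $g(\theta)=\sup\{\alpha<\theta\mid p^\theta_\alpha\le r\}$ is a slightly more direct form of the paper's Claim.

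The point you leave open is a genuine gap, and the paper's proof has it too: the paper simply asserts that $I_\beta\in[B]^{|B|}$ is ``in particular'' unbounded in $\lambda$.

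Hypothesis (iii) is not the obstacle. If $B'\subseteq B$ is unbounded, restricting a scale to $B'$ gives a scale. Bounded sets restrict to bounded sets, and any $h\in\prod B'$ extended by $0$ is dominated by some $f_\xi$. So when $|B|=\mathrm{cf}(\lambda)$, thinning to a cofinal subset of order type $\mathrm{cf}(\lambda)$ keeps (i)--(iii), every $|B|$-sized subset becomes cofinal, and your proof is complete. The same works whenever $\mathrm{cf}(\lambda)\notin Sp_T(\mathbb{P})$. What thinning can destroy is (ii), which refers to $|B|$.

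When $|B|>\mathrm{cf}(\lambda)$, your fallback (``$\mathcal{J}^B_{bd}$-positivity is what actually matters'') cannot be carried out, because the statement is false there. Here is a counterexample.

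\begin{itemize}
\item Let $A$ be any set of $\aleph_1$ regular cardinals above $\aleph_1$, and let $\mathbb{R}=\prod A$ with the pointwise order. Then $\mathbb{R}$ is $\aleph_2$-directed, so $\aleph_1\notin Sp_T(\mathbb{R})$. The projections give $A\subseteq Sp_T(\mathbb{R})$, and every member of $Sp_T(\mathbb{R})$ is at most $|\mathbb{R}|$.
\item Take $\lambda>|\mathbb{R}|$ singular of countable cofinality. By Shelah's theorem, choose regular $\theta_0<\theta_1<\cdots$ in $(|\mathbb{R}|,\lambda)$, cofinal in $\lambda$, with $tcf(\prod_n\theta_n/\mathrm{fin})=\lambda^+$.
\item Let $\Sigma$ be the finitely supported members of $\prod_n\theta_n$, ordered pointwise. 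A routine thinning (to a common finite support, then coordinate by coordinate) gives $Sp_T(\Sigma)=\{\omega\}\cup\{\theta_n\mid n<\omega\}$. Successive thinning also gives $Sp_T(\Sigma\times\mathbb{R})=Sp_T(\Sigma)\cup Sp_T(\mathbb{R})$.
\item Put $\mathbb{P}=\Sigma\times\mathbb{R}$ and $B=A\cup\{\theta_n\mid n<\omega\}$. Then $B\subseteq Sp_T(\mathbb{P})\cap\lambda$ is unbounded, and $|B|=\aleph_1\notin Sp_T(\mathbb{P})$.
\item A subset of $B$ is bounded in $\lambda$ iff it contains only finitely many $\theta_n$. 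Hence $tcf(\prod B/\mathcal{J}^B_{bd})=tcf(\prod_n\theta_n/\mathrm{fin})=\lambda^+$.
\item Yet $\lambda^+\notin Sp_T(\mathbb{P})$.
\end{itemize}

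You can see the failure inside your proof. With the natural witnesses, $A_\xi=A\in\mathcal{J}^B_{bd}$ is a legitimate bounded $\aleph_1$-sized set for every $\xi$. The domination step therefore never reaches the cofinal part of $B$.

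So the statement needs the extra hypothesis $|B|=\mathrm{cf}(\lambda)$, or equivalently, after thinning, $\mathrm{cf}(\lambda)\notin Sp_T(\mathbb{P})$. That is the only case the paper's corollaries use. With that hypothesis added, your proof is correct.
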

In particular we improve several results from \cite{Isbell65} and show that the cofinality of $\sup(Sp_T(\mathbb{P}))$ behaves similarly to the cofinality of $\chi(\mathbb{P})$ in the following sense:
\begin{corollary*}
    Suppose that $\sup(Sp_T(\mathbb{P}))$ is singular. Then $cf(\sup(Sp_T(\mathbb{P})))\in Sp_T(\mathbb{P})$.
\end{corollary*}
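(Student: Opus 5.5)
We argue by contradiction. Let $\lambda=\sup(Sp_T(\mathbb{P}))$, assume $\lambda$ is singular, and put $\kappa=cf(\lambda)<\lambda$. Suppose toward a contradiction that $\kappa\notin Sp_T(\mathbb{P})$. The aim is to apply the Theorem above with $\mu=\lambda^+$. That would give $\lambda^+\in Sp_T(\mathbb{P})$, which is impossible because $\lambda^+>\lambda=\sup(Sp_T(\mathbb{P}))$.

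First, since $\lambda$ is singular, it is a limit cardinal. Since it is the supremum of $Sp_T(\mathbb{P})$ and is not itself a member (it is not regular), $Sp_T(\mathbb{P})\cap\lambda$ is unbounded in $\lambda$. So we can choose a strictly increasing sequence $\langle\lambda_i\mid i<\kappa\rangle$ of cardinals in $Sp_T(\mathbb{P})$, cofinal in $\lambda$, with $\lambda_0>\kappa$. Let $A=\{\lambda_i\mid i<\kappa\}$. Then $A$ is a progressive set of regular cardinals: $|A|=\kappa<\min A$, $A$ is unbounded in $\lambda$, and it has order type $\kappa$.

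The key step is to thin $A$ out to some $B\subseteq A$ that is still unbounded in $\lambda$, has $|B|=\kappa$, and satisfies $tcf(\prod B/\mathcal{J}^B_{bd})=\lambda^+$. This is the standard pcf fact (Shelah; see Abraham--Magidor's handbook chapter) that for a progressive set $A$ of regular cardinals, unbounded in the singular $\lambda$ and of order type $cf(\lambda)$, such an unbounded $B\subseteq A$ exists. I would recall its proof via generators:
\begin{enumerate}
\item Because $\prod A/\mathcal{J}^A_{bd}$ is $\lambda$-directed (in fact $\lambda^+$-directed, using $|A|<\min A$ and the singularity of $\lambda$), some ultrafilter extending the dual of $\mathcal{J}^A_{bd}$ has cofinality $\lambda^+$. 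Hence $\lambda^+\in pcf(A)$.
\item Take the pcf generator $B=B_{\lambda^+}[A]$. Then $\prod B/\mathcal{J}_{<\lambda^+}[A]$ has true cofinality $\lambda^+$.
\item Every bounded subset of $A$ has $\max pcf<\lambda$. Hence $\mathcal{J}^A_{bd}\subseteq\mathcal{J}_{<\lambda^+}[A]$, and $B$ is unbounded in $\lambda$.
\end{enumerate}
Upgrading from the ideal $\mathcal{J}_{<\lambda^+}$ to $\mathcal{J}^B_{bd}$ on a further unbounded subset is exactly the content of the cited theorem, namely the existence of a scale on an unbounded subset of any cofinal sequence. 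I expect this step to be the main obstacle, and I would import it rather than reprove it.

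With $B$ in hand, all hypotheses of the Theorem hold:
\begin{enumerate}
\item[(i)] $B\subseteq Sp_T(\mathbb{P})\cap\lambda$ and $B$ is unbounded in the limit cardinal $\lambda$.
\item[(ii)] $|B|=\kappa$ is regular, since it is a cofinality, and $\kappa\notin Sp_T(\mathbb{P})$ by our assumption.
\item[(iii)] $tcf(\prod B/\mathcal{J}^B_{bd})=\lambda^+$.
\end{enumerate}
So the Theorem yields $\lambda^+\in Sp_T(\mathbb{P})$. This contradicts $\sup(Sp_T(\mathbb{P}))=\lambda$, and therefore $cf(\lambda)\in Sp_T(\mathbb{P})$.
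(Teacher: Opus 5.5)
\textbf{There is a genuine gap, and it sits exactly at the step you planned to import.} It is not a ZFC theorem that an arbitrary progressive $A$ of order type $cf(\lambda)$, cofinal in a singular $\lambda$, contains an unbounded $B$ with $tcf(\prod B/\mathcal{J}^B_{bd})=\lambda^+$. Shelah's representation theorem only provides \emph{some} cofinal set with this property (e.g.\ $C^{(+)}$ for a suitable club $C$). Your $A$, however, is dictated by $Sp_T(\mathbb{P})$.

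The statement consistently fails. Take Magidor's model of GCH below $\aleph_\omega$ with $2^{\aleph_\omega}=\aleph_{\omega+2}$, and let $A_0=\{\aleph_n\mid 1\leq n<\omega\}$. There $\max Pcf(A_0)=\aleph_{\omega+2}$, and $\mathcal{J}^{A_0}_{<\aleph_{\omega+1}}$ is the ideal of finite sets. So $Y=B^{A_0}_{\aleph_{\omega+2}}\setminus B^{A_0}_{\aleph_{\omega+1}}$ is infinite, every nonprincipal ultrafilter on $Y$ has cofinality $\aleph_{\omega+2}$, and no infinite $B\subseteq Y$ has true cofinality $\aleph_{\omega+1}$.

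Your sketch breaks in two places accordingly.
\begin{itemize}
\item In (1), $\lambda^+$-directedness of $\prod A/\mathcal{J}^A_{bd}$ only shows that every ultrafilter extending the cobounded filter has cofinality \emph{at least} $\lambda^+$. It does not give one of cofinality exactly $\lambda^+$.
\item In (3), a bounded $X\subseteq A$ may have $\max Pcf(X)>\lambda$; you only know $\max Pcf(X)\leq(\sup X)^{|X|}$. Moreover, to turn a scale modulo $\mathcal{J}^A_{<\mu}$ into a scale modulo the bounded ideal you need $\mathcal{J}^A_{<\mu}\subseteq\mathcal{J}^\lambda_{bd}$, which is the reverse of the inclusion you state.
\end{itemize}
A warning sign: your argument would show $\lambda^+\in Sp_T(\mathbb{P})$ whenever $Sp_T(\mathbb{P})$ is unbounded in a singular $\lambda$ with $cf(\lambda)\notin Sp_T(\mathbb{P})$. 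That is precisely the question the paper leaves open.

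\textbf{The repair is to stop insisting on $\lambda^+$.} Any $\mu>\lambda$ in $Sp_T(\mathbb{P})$ already contradicts $\lambda=\sup(Sp_T(\mathbb{P}))$. This is the paper's route: the lemma credited to Cummings, then the preceding corollary.
\begin{enumerate}
\item Let $\mu\in Pcf(A)$ be least such that the generator $B^A_\mu$ is unbounded in $\lambda$. It exists because $A$ is covered by finitely many generators.
\item $\mathcal{J}^A_{<\mu}$ is generated by the sets $B^A_\theta$ with $\theta<\mu$, and by minimality all of them are bounded. Hence $\mathcal{J}^A_{<\mu}\subseteq\mathcal{J}^\lambda_{bd}$.
\item Therefore a scale in $\prod B^A_\mu/\mathcal{J}^A_{<\mu}$ is also a scale modulo the bounded ideal, so $tcf(\prod B^A_\mu/\mathcal{J}_{bd})=\mu$.
\item Your own directedness observation, applied to the unbounded set $B^A_\mu$, now correctly gives $\mu>\lambda$: a scale of length $\leq\lambda$ in a $\lambda^+$-directed order would have an upper bound, contradicting cofinality.
\item Since $|B^A_\mu|=cf(\lambda)\notin Sp_T(\mathbb{P})$, Proposition~\ref{Main tool} gives $\mu\in Sp_T(\mathbb{P})$ with $\mu>\lambda$, which is the desired contradiction.
\end{enumerate}
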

This also allows to transfer results from traditionary PCF theory to results about the point spectrum:
\begin{corollary*}
     Suppose $\lambda$ is a singular limit cardinal such that $Sp_T(\mathbb{P})\cap \lambda$ is unbounded in $\lambda$.
    \begin{enumerate}
        \item  $Sp_T(\mathbb{P})$ contains a cardinal in $\{cf(\lambda)\}\cup Pcf(Sp_T(\mathbb{P})\cap\lambda)$. 
        \item If $\mathbb{P}$ is $cf(\lambda)^+$-directed, then $\lambda^+\in Sp_T(\mathbb{P})$.
        \item If $cf(\lambda)\notin Sp_T(\mathbb{P})$ and $Sp_T(\mathbb{P})$ contains a tail of regular cardinals (or just $C^{(+)}$ for some club $C$ in $\lambda$) then $\lambda^+\in Sp_T(\mathbb{P})$.
    \end{enumerate} 
\end{corollary*}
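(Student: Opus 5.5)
The plan is to derive all three items from the main Theorem* above. In each case we choose the set $B$ so that hypotheses (i)--(iii) can be checked with standard PCF facts. Throughout, let $\kappa=cf(\lambda)<\lambda$, and fix $B_0\subseteq Sp_T(\mathbb{P})\cap\lambda$ unbounded in $\lambda$ of order type $\kappa$ with $\min B_0>\kappa$. Then $B_0$ is a progressive set of regular cardinals, and every unbounded $B\subseteq B_0$ has $|B|=\kappa$, which is regular. Hence (i) and the cardinality part of (ii) hold automatically for unbounded subsets of $B_0$. The remaining work is to find an unbounded $B\subseteq B_0$ such that $\prod B/\mathcal{J}^B_{bd}$ has a true cofinality, and in items (2) and (3) to make that true cofinality equal $\lambda^+$.

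For (1), if $\kappa\in Sp_T(\mathbb{P})$ we are done, so assume $\kappa\notin Sp_T(\mathbb{P})$. Let $\mu$ be the minimum of $cf(\prod B_0/D)$ as $D$ ranges over uniform ultrafilters on $B_0$, i.e. ultrafilters disjoint from $\mathcal{J}^{B_0}_{bd}$.
\begin{enumerate}
\item By the choice of $\mu$, every unbounded subset of $B_0$ lies in some uniform $D$, whose cofinality is $\geq\mu$. Hence $\mathcal{J}_{<\mu}[B_0]\subseteq\mathcal{J}^{B_0}_{bd}$.
\item By the existence of pcf generators for progressive sets, there is $B=b_\mu[B_0]\notin\mathcal{J}_{<\mu}[B_0]$ with $tcf(\prod B/\mathcal{J}_{<\mu}[B])=\mu$. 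We may take $B$ unbounded: $b_\mu$ lies in the uniform ultrafilter $D$ realizing $\mu$, and all bounded sets lie in the dual of $D$.
\item A sequence that is increasing and cofinal modulo a smaller proper ideal remains so modulo a larger proper ideal. By (1) the ideal $\mathcal{J}^B_{bd}$ is at least as large as $\mathcal{J}_{<\mu}[B]$, so $tcf(\prod B/\mathcal{J}^B_{bd})=\mu$.
\end{enumerate}
The main Theorem* now gives $\mu\in Sp_T(\mathbb{P})$, and $\mu\in Pcf(Sp_T(\mathbb{P})\cap\lambda)$.

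For (3), assume $\kappa\notin Sp_T(\mathbb{P})$. If $Sp_T(\mathbb{P})$ contains a tail of regulars, take any club $C\subseteq\lambda$ of order type $\kappa$ above that tail; otherwise use the given club $C$, shrunk to order type $\kappa$ and to lie above $\kappa$. Put $B=C^{(+)}=\{c^+\mid c\in C\}\subseteq Sp_T(\mathbb{P})$. The basic theorem of PCF theory (Shelah; see Abraham--Magidor) gives $tcf(\prod_{c\in C}c^+/\mathcal{J}_{bd})=\lambda^+$, so the main Theorem* yields $\lambda^+\in Sp_T(\mathbb{P})$.

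For (2), directedness gives (ii): if $\mathbb{P}$ is $\kappa^+$-directed, then every subset of size $\leq\kappa$ is bounded, so $(\kappa,<)\not\leq_T\mathbb{P}$, i.e. $\kappa\notin Sp_T(\mathbb{P})$. Here we do not control which regulars lie in the spectrum, so we need the stronger form of Shelah's theorem: for any unbounded set $A$ of regular cardinals below singular $\lambda$, there is an unbounded $A'\subseteq A$ of order type $cf(\lambda)$ with $tcf(\prod A'/\mathcal{J}_{bd})=\lambda^+$. Apply it with $A=B_0$; the main Theorem* then gives $\lambda^+\in Sp_T(\mathbb{P})$.

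This is the step I expect to be the main obstacle. The familiar statements produce some scale of length $\lambda^+$, or a sequence $\langle c^+\rangle$ along a club, not one supported inside an arbitrary prescribed unbounded set. The fallback is to argue as in (1) with $\mu$ replaced by $\lambda^+$. Let $D$ be a uniform ultrafilter on $B_0$ with $cf(\prod B_0/D)\geq\lambda^+$; such $D$ exists because $\lambda^+\in pcf(B_0)$ for unbounded $B_0$. Using the generator $b_{\lambda^+}$ and the trichotomy/ideal structure, the goal is to show that $\mathcal{J}_{<\lambda^+}[B_0]$ consists of bounded sets. This holds because any unbounded set $A\subseteq B_0$ has $\max pcf(A)>\lambda$, so $A\notin\mathcal{J}_{<\lambda^+}$.
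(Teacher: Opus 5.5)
Your item (1) is correct and is essentially the paper's argument. Your $\mu$, the least cofinality of $\prod B_0/D$ over uniform $D$, is the same as the $\mu$ in the paper's lemma credited to Cummings (the least $\mu\in Pcf(A)$ with $B^A_\mu$ unbounded). Transferring the scale from $\mathcal{J}_{<\mu}$ to the bounded ideal and applying Proposition~\ref{Main tool} is exactly the paper's proof.

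In (3) you misquote Shelah's representation theorem. It does not give $tcf(\prod_{c\in C}c^+/\mathcal{J}_{bd})=\lambda^+$ for an arbitrary club $C$. It gives this only for some subclub $C_0\subseteq C$, and only when $cf(\lambda)>\omega$. That is how the paper argues, and it suffices because $C_0^{(+)}\subseteq C^{(+)}\subseteq Sp_T(\mathbb{P})$. In the tail case you should instead take an increasing sequence of regular cardinals representing $\lambda^+$ (one exists for every cofinality) and truncate it into the tail.

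The genuine gap is in (2), at exactly the step you flagged, and your fallback does not close it. The inclusion $\mathcal{J}_{<\lambda^+}[B_0]\subseteq\mathcal{J}_{bd}$ is automatic, since every uniform ultrafilter gives cofinality $\geq\lambda^+$. To repeat (1) with $\lambda^+$ in place of $\mu$, however, you need a uniform ultrafilter on $B_0$ of cofinality exactly $\lambda^+$, i.e.\ $b_{\lambda^+}[B_0]$ unbounded. The claim that $\lambda^+\in Pcf(B_0)$ for every unbounded progressive $B_0$ is false, and so is your ``stronger form'' of Shelah's theorem.

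Here is a counterexample. Suppose $\aleph_\omega$ is a strong limit with $2^{\aleph_\omega}>\aleph_{\omega+1}$ (consistent relative to large cardinals). Let $A_0=\{\aleph_n\mid 1\leq n<\omega\}$, $b=b_{\aleph_{\omega+1}}[A_0]$ and $A=A_0\setminus b$.
An ultrafilter of cofinality $\max Pcf(A_0)=2^{\aleph_\omega}$ avoids $b\in\mathcal{J}_{<\aleph_{\omega+2}}[A_0]$, so $A$ is infinite. Since $\mathcal{J}_{<\aleph_{\omega+2}}[A_0]$ is generated by $b$ and the finite sets, every nonprincipal ultrafilter on $A$ has cofinality $\geq\aleph_{\omega+2}$, so $\mathcal{J}_{<\aleph_{\omega+2}}[A]=[A]^{<\omega}$.
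Now let $\mathbb{P}=\prod A$ with the pointwise order. It is $\aleph_1$-directed and $A\subseteq Sp_T(\mathbb{P})$. Take any $\aleph_{\omega+1}$ elements of $\mathbb{P}$. By $\aleph_{\omega+2}$-directedness of $\prod A/\mathcal{J}_{<\aleph_{\omega+2}}[A]$, a single $g$ bounds them modulo finite sets. Pigeonholing on the finite exceptional set, and then on the values there, yields $\aleph_{\omega+1}$ of them with a common pointwise bound. Hence $\aleph_{\omega+1}\notin Sp_T(\mathbb{P})$.

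So (2) is not a theorem of $\mathsf{ZFC}$ (relative to the large cardinals needed to violate SCH at $\aleph_\omega$), and no choice of $B$ can repair your argument. The paper's intended route, via Theorem~\ref{Simple case} and its corollary $Pcf(B)\subseteq Sp_T(\mathbb{P})$, needs the same missing fact: $\lambda^+\in Pcf(B)$ for some $B\subseteq Sp_T(\mathbb{P})\cap\lambda$ of size $cf(\lambda)$. The same example shows that the club version of (3) fails when $cf(\lambda)=\omega$: take $A=C^{(+)}$ for an unbounded $C$ of order type $\omega$, which is trivially closed.
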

In the case that $\lambda$ is an inaccessible limit of $Sp_T(\mathbb{P})$, we have the following corollary:
\begin{corollary*}
    Suppose that $\lambda$ is weakly inaccessible and $B\subseteq Sp_T(\mathbb{P})\cap\lambda$ is unbounded and  $tcf(\prod B/\mathcal{J}^\lambda_{bd})$ exists. $Sp_T(\mathbb{P})\cap[\lambda,2^\lambda)\neq\emptyset$. 
\end{corollary*}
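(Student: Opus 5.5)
The plan is to reduce this to the main Theorem stated above, with $\mu:=tcf(\prod B/\mathcal{J}^\lambda_{bd})$. The remaining work is to locate $\mu$ in the interval $[\lambda,2^\lambda)$.

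\emph{Step 1: $|B|=\lambda$.} Since $\lambda$ is weakly inaccessible, it is regular. $B$ is unbounded in $\lambda$, so $|B|\geq cf(\lambda)=\lambda$. Also $B\subseteq\lambda$, so $|B|=\lambda$, and $cf(|B|)=|B|$. In particular $\mathcal{J}^\lambda_{bd}$ restricted to $B$ is exactly $\mathcal{J}^B_{bd}$, so hypothesis (iii) of the main Theorem holds with our $\mu$. Hypothesis (i) is given.

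\emph{Step 2: case split on whether $\lambda\in Sp_T(\mathbb{P})$.} If $\lambda\in Sp_T(\mathbb{P})$, then $\lambda$ itself witnesses $Sp_T(\mathbb{P})\cap[\lambda,2^\lambda)\neq\emptyset$, since $\lambda<2^\lambda$. Otherwise $|B|=\lambda\notin Sp_T(\mathbb{P})$, which is hypothesis (ii). The main Theorem then gives $\mu\in Sp_T(\mathbb{P})$.

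\emph{Step 3: $\lambda\leq\mu\leq 2^\lambda$.}
\begin{itemize}
\item For the lower bound, I show that $\prod B/\mathcal{J}^\lambda_{bd}$ is $\lambda$-directed. Take $\kappa<\lambda$ functions $\langle f_i\mid i<\kappa\rangle$ in $\prod B$. For each $b\in B$ with $b>\kappa$, regularity of $b$ lets us set $g(b)=\sup_i f_i(b)<b$. Define $g$ arbitrarily on the bounded set $B\cap(\kappa+1)$. Then $g$ dominates every $f_i$ modulo $\mathcal{J}^\lambda_{bd}$. A cofinal sequence of length $\mu$ in a $\lambda$-directed order forces $\mu\geq\lambda$.
\item For the upper bound, any $<_{\mathcal{J}_{bd}}$-increasing cofinal sequence consists of distinct elements of $\prod B$. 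Hence $\mu\leq|\prod B|\leq\lambda^\lambda=2^\lambda$.
\end{itemize}

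\emph{Step 4: excluding $\mu=2^\lambda$.} This is the step I expect to be the main obstacle. The bound $\mu<2^\lambda$ does not follow from counting alone: if $2^\lambda$ is regular, nothing above rules out $\mu=2^\lambda$.

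My first attempt is a sharper counting argument. I would bound $\mu$ by the cofinality of $(\prod B,<_{\mathcal{J}_{bd}})$ and try to show this cofinality is strictly below $2^\lambda$. The idea is to use that $\lambda$ is a limit of the regular cardinals in $B$, and to compare with $\prod_{b\in B\cap\alpha}b$ for $\alpha<\lambda$ via a diagonal or club-guessing style argument.

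If that fails, the fallback is to pass to the interval $[\lambda,2^\lambda]$, which Steps 2 and 3 already establish. The case $\mu=2^\lambda$ would then be isolated as exactly the situation requiring an additional argument.
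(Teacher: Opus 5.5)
Your Steps 1--3 are how the paper gets this corollary from Proposition~\ref{Main tool}. Since $|B|=\lambda$ is regular, either $\lambda\in Sp_T(\mathbb{P})$, or hypothesis (ii) holds and $\mu=tcf(\prod B/\mathcal{J}^\lambda_{bd})\in Sp_T(\mathbb{P})$ with $\lambda\le\mu\le|\prod B|=2^\lambda$. That argument proves $Sp_T(\mathbb{P})\cap[\lambda,2^\lambda]\neq\emptyset$. The closed interval is what the statement should say; compare the paper's corollary for singular $\lambda$, which is stated with $(\lambda,2^\lambda]$. With that reading your proof is complete.

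Step 4 should be dropped rather than repaired, because $\mu<2^\lambda$ is false in general. If $2^\lambda=\lambda^+$, then $\prod B/\mathcal{J}^\lambda_{bd}$ is $\lambda^+$-directed: diagonalize with $g(b)=\sup\{f_i(b)+1\mid i\le\sup(B\cap b)\}$ on the non-accumulation points $b$ of $B$. So $\mu=\lambda^+=2^\lambda$, and the paper's own corollary about an inaccessible $\sup(Sp_T(\mathbb{P}))$ rests on exactly this.

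In fact the half-open version can fail outright. Assume GCH, let $\lambda$ be measurable with normal measure $U$, let $B=\{\kappa^+\mid\kappa<\lambda\}$, and let $\mathbb{P}=\prod B$ ordered pointwise. The projections give $B\subseteq Sp_T(\mathbb{P})$, and $tcf(\prod B/\mathcal{J}^\lambda_{bd})=\lambda^+$. Now take any $\langle h_\alpha\mid\alpha<\lambda\rangle\subseteq\mathbb{P}$:
\begin{enumerate}
\item By $\lambda$-completeness, pick $\gamma_b<b$ with $A_b=\{\alpha\mid h_\alpha(b)<\gamma_b\}\in U$.
\item Let $S$ be the set of limit cardinals $\alpha<\lambda$ with $\alpha\in A_b$ for all $b\in B\cap\alpha$; then $S\in U$ by normality.
\item The family $\{h_\alpha\mid\alpha\in S\}$ is pointwise bounded. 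At $b=\kappa^+$, indices above $b$ give values below $\gamma_b$, and at most $\kappa$ indices in $S$ lie below $b$.
\end{enumerate}
Hence $\lambda\notin Sp_T(\mathbb{P})$. Since $\lambda$ is the only regular cardinal in $[\lambda,\lambda^+)$, we get $Sp_T(\mathbb{P})\cap[\lambda,2^\lambda)=\emptyset$. So your fallback is the correct theorem, and Steps 1--3 already prove it.
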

The results above can also be seen as \textit{stepping up} type of results for limit steps of $Sp_T(\mathbb{P})$, in attempt to provide a positive answer to Question \ref{question2}. Unfortunately, our results give no information about the case where $\max(Sp_T(\mathbb{P}))$ is a successor cardinal.

In the second part of this paper we provide a new strategy to address Question \ref{Queation1} by considering two generalizations. In the first, we study some ideals which naturally arise from the order of $\mathbb{P}$ and demonstrate how the PCF results can be proved in a more general set-up:
\begin{theorem*}
    Let $\mathbb{P},\mathbb{Q}$ and $(\mathbb{R}^p)_{p\in\mathbb{P}}$ we a collection of directed sets such that:
    \begin{enumerate}
        \item $\mathbb{P}\not\leq_T\mathbb{Q}$.
        \item For each $p\in \mathbb{P}$, $\mathbb{R}^p\leq_T\mathbb{Q}$.
    \end{enumerate} There is a natural map $F:\prod_{p\in\mathbb{P}}\mathbb{P}^p/J_{bd}(\mathbb{P})\to\mathbb{Q}$ mapping a cofinal subset of $(\prod_{p\in\mathbb{P}}\mathbb{P}^p/J_{bd}(\mathbb{P}),\leq_{J_{bd}(\mathbb{P})})$ to an unbounded subset of $\mathbb{Q}$. In particular if $\prod_{p\in\mathbb{P}}\mathbb{P}^p/J_{bd}(\mathbb{P})$ has a linear cofinal subset, then $\prod_{p\in\mathbb{P}}\mathbb{P}^p/J_{bd}(\mathbb{P})\leq_T\mathbb{Q}$
\end{theorem*}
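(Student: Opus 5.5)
Read literally, the statement has $\mathbb{P}^p$ where $\mathbb{R}^p$ is clearly intended, so I take the product to be $\prod_{p\in P}\mathbb{R}^p$. I would run the classical PCF ``tcf transfer'' argument with $\mathbb{P}$ as index set, replacing the well-order of $\lambda$ by the directed order of $\mathbb{P}$. I take $J_{bd}(\mathbb{P})$ to be the ideal of subsets of $P$ disjoint from some cone $\{p\in P\mid p\geq_P q\}$; directedness of $\mathbb{P}$ makes this an ideal. The proof will use the Schmidt characterization of the Tukey order: $\mathbb{A}\leq_T\mathbb{B}$ iff there is a \emph{convergent} map $f:B\to A$, i.e.\ for every $a\in A$ there is $b_0\in B$ with $f(b)\geq a$ for all $b\geq b_0$. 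Equivalently, $\mathbb{A}\leq_T\mathbb{B}$ iff there is a map $A\to B$ sending unbounded sets to unbounded sets.

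First, using hypothesis (2), fix for each $p$ a convergent map $f_p:Q\to R^p$. Define $\Phi:Q\to\prod_p R^p$ by $\Phi(q)(p)=f_p(q)$. The intended ``natural map'' $F$ goes the other way. For $h\in\prod_pR^p$ and each $p$, pick a witness $q_p(h)\in Q$ such that $f_p(q)\geq h(p)$ for all $q\geq q_p(h)$.

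The key step uses hypothesis (1). Since $\mathbb{P}\not\leq_T\mathbb{Q}$, the map $p\mapsto q_p(h)$ is not unbounded-to-unbounded. So there is an unbounded $A_h\subseteq P$ whose image is bounded by some $q^*_h\in Q$. Then $\Phi(q)(p)\geq h(p)$ for all $p\in A_h$ and all $q\geq q^*_h$. I set $F(h)=q^*_h$.

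To show that $F$ maps a cofinal subset $\mathcal{C}$ of $(\prod_pR^p,\leq_{J_{bd}(\mathbb{P})})$ onto an unbounded set, suppose $F[\mathcal{C}]$ were bounded by some $q'$. Then $\Phi(q')$ would dominate every $h\in\mathcal{C}$ on a large set. Taking $h\in\mathcal{C}$ above a function strictly exceeding $\Phi(q')$ mod $J_{bd}$ gives a contradiction, provided the $\mathbb{R}^p$ have no maximum (otherwise the product is trivial modulo the ideal).

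For the ``in particular'' clause, suppose $\prod_pR^p/J_{bd}(\mathbb{P})$ has a linear cofinal subset $\mathcal{C}$. We may take $\mathcal{C}$ well-ordered of regular type $\kappa$. Every unbounded subset of $\mathcal{C}$ is then cofinal in $\mathcal{C}$. Applying the previous paragraph to each cofinal subfamily shows that $F\restriction\mathcal{C}$ is unbounded-to-unbounded. Hence $\prod_pR^p/J_{bd}(\mathbb{P})\equiv_T\mathcal{C}\leq_T\mathbb{Q}$.

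The main obstacle is the mismatch between ``$A_h$ unbounded in $\mathbb{P}$'' and what the quotient order $\leq_{J_{bd}(\mathbb{P})}$ requires, namely that the failure set lie in $J_{bd}(\mathbb{P})$. For linear $\mathbb{P}=\lambda$ the two agree, since the complement of an unbounded set need not be bounded but the classical argument only needs domination on a positive set. For general $\mathbb{P}$ I would first try to replace the dominance in the definition of $F$ by ``$\Phi(q)\geq h$ on a $J_{bd}$-positive set''. I would then check that the unboundedness argument only uses this weaker relation, exploiting that cofinality of $\mathcal{C}$ lets us choose $h$ that beats $\Phi(q')$ off a $J_{bd}$-null set.
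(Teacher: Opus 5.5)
\textbf{Gap: the choice of ideal.} The problem is your choice of ideal, and it is exactly the ``main obstacle'' you flag at the end and leave unresolved. In the paper, $J_{bd}(\mathbb{P})$ is the ideal of \emph{bounded} subsets of $P$, so $J_{bd}(\mathbb{P})^+$ is exactly the family of unbounded sets. You instead use the ideal of sets missing some cone $\{p\mid p\geq_P r\}$, i.e.\ the non-cofinal sets, whose positive sets are the cofinal ones.

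With your ideal the contradiction step fails. $g\leq_{J} h$ only gives $g(p)\leq h(p)$ on a cone. The set $A_h$ supplied by $\mathbb{P}\not\leq_T\mathbb{Q}$ is merely unbounded and need not meet that cone. Your proposed repair, choosing $A_h$ to be positive (i.e.\ cofinal), is impossible in general, and the statement is in fact false for your ideal.

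\textbf{Counterexample.} Take $\mathbb{P}=\omega\times\omega_1$ with the product order, $\mathbb{Q}=\omega$, and all $\mathbb{R}^p=\omega$ with identity maps. Then $\mathbb{P}\not\leq_T\omega$, since $\chi(\mathbb{P})=\omega_1$.
\begin{itemize}
\item The repair breaks down: for $h(n,\alpha)=n$ there is no cofinal set on which $h$ is bounded, so the repaired $F(h)$ is undefined.
\item Your original $F$ also fails. Let $\mathcal{C}=\{h\mid h\restriction(\{0\}\times\omega_1)\equiv 0\}$. This family is cofinal modulo the cone ideal, since a member can agree with any $g$ on the cone above $(1,0)$. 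Yet $A_h=\{0\}\times\omega_1$ is unbounded in $\mathbb{P}$ and is a legitimate choice, giving $F\equiv 0$ on $\mathcal{C}$.
\end{itemize}

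\textbf{The fix.} Use the paper's definition. Then $A_h\in J_{bd}(\mathbb{P})^+$ automatically, and your argument closes. Suppose $h\in\mathcal{C}$ and $g\leq_{J_{bd}(\mathbb{P})}h$. The exceptional set $\{p\mid g(p)\not\leq h(p)\}$ is bounded by some $p^*$. Any $p_0\in A_h$ with $p_0\not\leq p^*$ then gives $g(p_0)\leq h(p_0)\leq\Phi(q')(p_0)$, contradicting the choice of $g$.

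With this change your proof is essentially the paper's: your $A_h$ is its $I_{\vec a}$. The one difference is how the bound on $E_p=\{h(p)\mid p\in A_h,\ h\in\mathcal{C}\}$ is obtained. Your convergent maps give the uniform bound $\Phi(q')(p)$ directly. The paper works with unbounded maps $\pi^p$ and splits into two cases: either some $E_p$ is unbounded, which contradicts the unboundedness of $\pi^p$, or every $E_p$ has a bound $b_p\notin E_p$. Your handling of the ``in particular'' clause is fine once the main claim is in place.
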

In the second generalization we introduce the $\mathcal{I}$-cohesive property for some ideal $\mathcal{I}$  over $\lambda$, such that:
\begin{enumerate}
    \item If $\lambda\in Sp_T(\mathbb{P})$ then for any $\mathcal{I}$ extending the $\mathcal{J}^\lambda_{bd}$, $\mathbb{P}$ is $\mathcal{I}$-cohesive.
    \item $\mathcal{I}$-cohesivness has an ultrapower characterization when $\mathbb{P}=\mathcal{U}$ is an ultrafilter.
    \item If $\mathbb{P}$ is $\mathcal{I}$-cohesive for some $\lambda$-complete ideal $\mathcal{I}$ over $\lambda$, then $\lambda\leq \chi(\mathbb{P})$. 
\end{enumerate}
We then prove a strengthening of the previous stepping up results:
\begin{theorem*}
    Suppose that $B\subseteq Sp_T(\mathbb{P})$ and $cf(|B|)=|B|$ and let $\mathcal{I}$ be an ideal on $|B|$ such that:
    \begin{enumerate}
        \item  $\mathbb{P}$ is not $\mathcal{I}$-cohesive.
        \item $tcf(\prod B /\mathcal{I})=\mu$.
    \end{enumerate}
    Then $\mu\in Sp_T(\mathbb{P})$.
\end{theorem*}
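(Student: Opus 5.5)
The plan is to build a Tukey map $\mu\to\mathbb{P}$ directly, gluing together the Tukey maps witnessing $\lambda\in Sp_T(\mathbb{P})$ for $\lambda\in B$ along a scale in $\prod B/\mathcal{I}$. The bridge is the failure of $\mathcal{I}$-cohesiveness.

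The definition of $\mathcal{I}$-cohesive has not appeared yet, so I take the one suggested by properties (1)--(3) listed before the statement: $\mathbb{P}$ is $\mathcal{I}$-cohesive (for $\mathcal{I}$ an ideal on $\kappa$) if there is a sequence $\langle p_i\mid i<\kappa\rangle$ in $P$ such that $\{p_i\mid i\in A\}$ is unbounded in $\mathbb{P}$ for every $\mathcal{I}$-positive $A$. Hypothesis (1) then reads: for every sequence $\langle p_i\mid i<\kappa\rangle$ there is an $\mathcal{I}$-positive $A$ with $\{p_i\mid i\in A\}$ bounded. This definition is the step I am least sure matches the paper; if the actual definition differs, the argument below should be adapted accordingly.

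Setup. Let $\kappa=|B|$, which is regular, and enumerate $B=\{\lambda_i\mid i<\kappa\}$ increasingly. Since each $\lambda_i$ is regular and $\lambda_i\leq_T\mathbb{P}$, fix $f_i:\lambda_i\to P$ sending unbounded subsets of $\lambda_i$ to unbounded subsets of $\mathbb{P}$. By hypothesis (2), fix a $<_{\mathcal{I}}$-increasing sequence $\langle g_\xi\mid \xi<\mu\rangle$ in $\prod_{i<\kappa}\lambda_i$ that is cofinal modulo $\mathcal{I}$; here $\mu$ is regular, as a true cofinality.

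Definition of the map. For each $\xi<\mu$, apply the failure of cohesiveness to the sequence $\langle f_i(g_\xi(i))\mid i<\kappa\rangle$. This gives an $\mathcal{I}$-positive set $A_\xi\subseteq\kappa$ and an element $h(\xi)\in P$ with $f_i(g_\xi(i))\leq_P h(\xi)$ for all $i\in A_\xi$. The claim is that $h:\mu\to P$ maps unbounded sets to unbounded sets, which gives $\mu\leq_T\mathbb{P}$ and so $\mu\in Sp_T(\mathbb{P})$.

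Verification. Suppose $X\subseteq\mu$ is unbounded but $h[X]$ is bounded by some $q\in P$.
\begin{enumerate}
\item For each $i$, put $S_i=\{\alpha<\lambda_i\mid f_i(\alpha)\leq_P q\}$. Since $f_i[S_i]$ is bounded by $q$ and $f_i$ is Tukey, $S_i$ is bounded in $\lambda_i$.
\item Let $g(i)=\sup S_i+1<\lambda_i$, so $g\in\prod B$.
\item By cofinality of the scale there is $\xi_0<\mu$ with $g<_{\mathcal{I}}g_{\xi_0}$. Choose $\xi\in X$ with $\xi\geq\xi_0$; then $g<_{\mathcal{I}}g_\xi$, i.e.\ $\{i\mid g_\xi(i)\leq g(i)\}\in\mathcal{I}$.
\item For $i\in A_\xi$ we have $f_i(g_\xi(i))\leq_P h(\xi)\leq_P q$, so $g_\xi(i)\in S_i$ and hence $g_\xi(i)<g(i)$.
\item Thus $A_\xi\subseteq\{i\mid g_\xi(i)\leq g(i)\}\in\mathcal{I}$, contradicting the $\mathcal{I}$-positivity of $A_\xi$.
\end{enumerate}
This contradiction completes the argument. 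Note that the whole argument only needs $\mathcal{I}$ to be a proper ideal, together with non-cohesiveness and the existence of a scale of length $\mu$.
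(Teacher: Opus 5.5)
Your proof is correct and follows the paper's argument. Your map $\xi\mapsto h(\xi)$ is exactly the paper's $\beta\mapsto p^*_\beta$. Your domination argument along the scale, which contradicts the $\mathcal{I}$-positivity of $A_\xi$, is the paper's claim rearranged: you bound $S_i=\{\alpha<\lambda_i\mid f_i(\alpha)\leq_P q\}$ directly via the Tukey property, instead of first showing that some $E_b=\{f_\beta(b)\mid \beta\in J,\ b\in I_\beta\}$ is unbounded.

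Your choice $g(i)=\sup S_i+1$ secures the strict inequality that the paper's write-up leaves slightly loose, since it takes only $\sup E_b\leq g(b)$ and $g\leq_{\mathcal{I}}f_\beta$. One cosmetic fix: index $B$ by an arbitrary bijection with $\kappa$ rather than an increasing enumeration, since $\text{otp}(B)$ may exceed $\kappa$ and you never use monotonicity.
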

\section{On the Tukey spectrum and PCF theory}
Let us start with some preliminary results regarding the Tukey spectrum. After proving Theorem \ref{Isbells Thm}, Isbell provided several other results regarding limit points of the point spectrum.
\begin{theorem}[Isbell]\label{thm more isbell}
    Let $\mathbb{P}$ be a directed set and let $\lambda$ be a limit point of $Sp_T(\mathbb{P})$. 
    \begin{enumerate}
        \item There is $\lambda\leq \mu\leq 2^\lambda$ such that $cf(\mu)\in Sp_T(\mathbb{P})$.
        \item If $Sp_T(\mathbb{P})\cap\lambda$ is stationary in $\lambda$ (in particular $cf(\lambda)>\aleph_0$), then $cf(\lambda)\in Sp_T(\mathbb{P})$.
        \item If $\lambda$ is singular, and for a stationary set of $m<\lambda$, $\{cf(\mu)\mid \mu\in(m,2^m]\}\cap Sp_T(\lambda)=\emptyset$ then $cf(\lambda)\in Sp_T(\mathbb{P})$.
    \end{enumerate}
\end{theorem}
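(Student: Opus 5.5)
The main tool throughout is the standard reformulation of $\kappa\leq_T\mathbb{P}$ for regular $\kappa$: \emph{$\kappa\in Sp_T(\mathbb{P})$ iff there is a family $\{p_\alpha\mid\alpha<\kappa\}\subseteq P$ every $\kappa$-sized subfamily of which is unbounded in $\mathbb{P}$}. Together with it I will use the fact recalled above that $cf(\chi(\mathbb{Q}))\in Sp_T(\mathbb{Q})$ for every directed $\mathbb{Q}$, and that $\mathbb{Q}\leq_T\mathbb{P}$ gives $Sp_T(\mathbb{Q})\subseteq Sp_T(\mathbb{P})$. For each regular $\kappa\in Sp_T(\mathbb{P})\cap\lambda$ fix such a witness $\vec p^{\,\kappa}=\langle p^\kappa_\alpha\mid\alpha<\kappa\rangle$, and let $X$ be the union of these witnesses, so $|X|=\lambda$.

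For (1) I pass to the \emph{trace poset}. For $b\in P$ put $S_b=\{x\in X\mid x\leq_P b\}$ and $\mathbb{T}=(\{S_b\mid b\in P\},\subseteq)$. By directedness of $\mathbb{P}$, $\mathbb{T}$ is directed. The map $b\mapsto S_b$ is monotone with image all of $\mathbb{T}$, so it is cofinal and $\mathbb{T}\leq_T\mathbb{P}$. Put $\mu=\chi(\mathbb{T})$; since $\mathbb{T}\subseteq\mathcal{P}(X)$, we get $\mu\leq 2^\lambda$.
\begin{itemize}
\item[$\bullet$] \emph{Lower bound $\mu\geq\lambda$.} A cofinal subfamily of $\mathbb{T}$ covers $X$. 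Each $S_b$ meets each $\vec p^{\,\kappa}$ in fewer than $\kappa$ points. If the cofinal family had size $\nu<\lambda$, pick a regular $\kappa\in Sp_T(\mathbb{P})$ with $\nu<\kappa<\lambda$ (possible since $\lambda$ is a limit point). Then $\kappa$ points would be covered by $\nu$ sets each meeting $\vec p^{\,\kappa}$ in ${<}\kappa$ points, contradicting regularity of $\kappa$.
\item[$\bullet$] \emph{Conclusion.} $cf(\mu)\in Sp_T(\mathbb{T})\subseteq Sp_T(\mathbb{P})$, which proves (1).
\end{itemize}

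For (2), I first note that $\lambda$ must be regular. If $\lambda$ were singular, a club in $\lambda$ of order type $cf(\lambda)$ consisting of singular cardinals above $cf(\lambda)$ would avoid all regular cardinals, so $Sp_T(\mathbb{P})\cap\lambda$ could not be stationary. Hence the goal is $\lambda\in Sp_T(\mathbb{P})$. Suppose not. Then every $\lambda$-sequence in $P$ has a bounded $\lambda$-sized subsequence. For any $b$, the trace $S_b$ meets $\vec p^{\,\kappa}$ in a bounded subset of $\kappa$. Hence $g_b(\kappa)=\sup\{\alpha<\kappa\mid p^\kappa_\alpha\leq b\}$ is regressive on the stationary set $S=Sp_T(\mathbb{P})\cap\lambda$.

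The plan is to build a single $\lambda$-sequence, diagonalizing through the witnesses, so that a bound $b$ for a $\lambda$-subsequence makes $g_b$ unbounded below $\kappa$ on a stationary set. Fodor's lemma applied to $g_b$ (and to $b$-indexed variants along a continuous chain of elementary submodels of size ${<}\lambda$) should then produce the contradiction. One candidate is to let the $\alpha$-th term combine elements $p^{\kappa}_{\alpha}$ for $\kappa\in S$ above $\alpha$, using directedness so that one bound covers many witnesses at once. Choosing this sequence correctly is the step I expect to be the main obstacle. Naive choices only put one point of each witness below $b$. The argument has to exploit stationarity, not mere unboundedness, presumably through a pressing-down argument on the sets $\{\alpha\mid p^\kappa_\alpha\leq b\}$ for a club of elementary submodels $M$ with $M\cap\lambda=\kappa\in S$.

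For (3), let $\theta=cf(\lambda)<\lambda$ and assume $\theta\notin Sp_T(\mathbb{P})$. Fix a continuous cofinal sequence $\langle\lambda_i\mid i<\theta\rangle$. For each $i$, apply the trace construction of (1) to the witnesses below $\lambda_i$. This gives posets $\mathbb{T}_i\leq_T\mathbb{P}$ whose character lies in $(m,2^m]$ for $m=\lambda_i$, whenever $\lambda_i$ is a limit of $Sp_T(\mathbb{P})$. Then $cf(\chi(\mathbb{T}_i))\in Sp_T(\mathbb{P})$, which contradicts the hypothesis on a stationary set of $i$ unless those $\lambda_i$ are not limit points. I would use $\theta\notin Sp_T(\mathbb{P})$ to show that a single $b$ bounds $\theta$ many chosen representatives, one from the witnesses below each $\lambda_i$. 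This forces the limit points to appear on a club, and intersecting that club with the given stationary set yields the contradiction.
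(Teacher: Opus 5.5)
Your proof of (1) is correct and is essentially the paper's. The trace poset $\mathbb{T}$ is a monotone surjective image of $\mathbb{P}$ that records where $b$ sits inside each witness, so it is a form of Isbell's subdirect product $E$. Taking $\mu=\chi(\mathbb{T})$ with $\lambda\le\mu\le 2^\lambda$ is exactly the paper's choice $\mu=\chi(E)$.

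The genuine gap is (2), where you give no argument. Your reduction to regular $\lambda$ is right, and so is your observation that $g_b$ is regressive on $S=Sp_T(\mathbb{P})\cap\lambda$. But you then look for an unbounded $\lambda$-sequence, and the candidates you describe cannot work. Directedness only guarantees that a term lies above finitely many prescribed witness points. A $\Delta$-system argument (applied first to the prescribed sets, then to the columns their tails meet) then always gives a $\lambda$-subfamily whose prescribed points meet every column in a finite set. So the bound you get from $\lambda\notin Sp_T(\mathbb{P})$ need not lie above $\kappa$ many points of any column.

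The paper goes in the dual direction and uses Fodor to define a monotone map into $\lambda$. In your notation it is $h(b)=\min\{i<\lambda\mid g_b(\kappa)<i\text{ for unboundedly many }\kappa\in S\}$. This is $<\lambda$ by Fodor, and it is monotone because $b\le b'$ implies $g_b\le g_{b'}$ pointwise. A monotone cofinal map $\mathbb{P}\to\lambda$ is exactly a witness to $\lambda\le_T\mathbb{P}$.

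However, all the content lies in the cofinality of $h$. You need, for each $j<\lambda$, some $b$ with $g_b\ge j$ off a bounded (or nonstationary) subset of $S$, and this does not follow from directedness. Take $\bigoplus_{\kappa\in S}\kappa$ (finite-support functions, pointwise order) with the obvious witnesses. There $g_b=b$ vanishes off a finite set, so $h$ is constant, even though $\lambda\in Sp_T$ for a different reason (the supports). A complete proof of (2) therefore needs a separate argument for the case where no single $b$ is large on almost all of $S$. Your proposal does not provide one, and neither does the paper's sketch, which simply asserts cofinality from the listed properties of $E$.

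For (3) the paper gives no proof at all (it only attributes the result to Isbell), and your sketch does not close. Applying (1) at $m=\lambda_i$ gives $\chi(\mathbb{T}_i)\in[\lambda_i,2^{\lambda_i}]$, not $(\lambda_i,2^{\lambda_i}]$. So the hypothesis produces no contradiction; it only forces $\chi(\mathbb{T}_i)=\lambda_i$, hence $cf(\lambda_i)\in Sp_T(\mathbb{P})$, for stationarily many $i$.

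You also do not need $\theta\notin Sp_T(\mathbb{P})$ to make the $\lambda_i$ limit points of the spectrum. When $cf(\lambda)>\omega$, the accumulation points of $Sp_T(\mathbb{P})\cap\lambda$ already form a club in $\lambda$. The step that actually uses $\theta\notin Sp_T(\mathbb{P})$ to pass from the information at the $\lambda_i$ to $cf(\lambda)\in Sp_T(\mathbb{P})$ is left entirely unspecified.
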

Isbell's result is obtained by combining monotone cofinal maps from $\mathbb{P}$. Namely, suppose that for each $\alpha$, $\varphi_\alpha:\mathbb{P}\to \mathbb{Q}_\alpha$ is a monotone and cofinal map. Then Isbell defines $\varphi^*:\mathbb{P}\to \prod_{\alpha}\mathbb{Q}_\alpha$ naturally $\varphi^*(p)=(\varphi_\alpha(p))_\alpha$. The map $\varphi^*$ has three crucial properties:
\begin{enumerate}
    \item It is monotone.
    \item $E:=\rng(\varphi^*)$ is directed.
    \item $\pi_{\alpha}\circ\varphi^*=\varphi_\alpha$, where $\pi_\alpha$ is the projection to the $\alpha^{\text{th}}$ coordinate.
    \item $E\leq_T \mathbb{P}$.
\end{enumerate}
The set $E$ is called a \textit{subdirect product}. Then Theorem \ref{thm more isbell} follows by analyzing $E$ in the case where $\mathbb{Q}_\alpha\in Sp_T(\mathbb{P})$. For example the $\mu$ witnessing $(1)$ is just $\chi(E)$ (indeed $\lambda\leq\chi(E)\leq |E|\leq 2^\lambda$). Also $(2)$ follows by defining a monotone cofinal map from $E$ to $\lambda$, which takes $e\in E$ and outputs the minimal $i<\lambda$ such that for unboundedly many  $\alpha$, $e(\alpha)<i$. The properties above guarantee that this is a monotone and cofinal map. The disadvantage of this method is that we do not have much control on the set $E$ and how it relates to the full product $\prod_\alpha\mathbb{Q}_\alpha$. 

In this paper we are mostely interested in the dual of this method. Namely, given $\psi_\alpha:\mathbb{Q}_\alpha\to \mathbb{P}$ which are unbounded, combining these maps into a single $\psi$ which is defined on interesting sets.  
The main source of such sets comes from PCF theory. This will ultimately result in improvment of the above result. Before moving forward in that direction, let us start with a simple case, when $\mathbb{P}$ is sufficiently directed:
\begin{theorem}\label{Simple case}
    Suppose that $(\mathbb{Q}_\alpha)_{\alpha<\lambda}$ is a collection of directed sets such that for every $\alpha<\lambda$, $\mathbb{Q}_\alpha\leq_T \mathbb{P}$. Suppose moreover that $\mathbb{P}$ is $\lambda^+$-directed, then $\prod_{\alpha<\lambda}\mathbb{Q}_\alpha\leq_T\mathbb{P}$.
\end{theorem}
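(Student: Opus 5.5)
We will use the unbounded-map formulation of the Tukey order. Recall that $\mathbb{Q}\leq_T\mathbb{P}$ holds iff there is a map $\psi:\mathbb{Q}\to\mathbb{P}$ sending unbounded subsets of $\mathbb{Q}$ to unbounded subsets of $\mathbb{P}$; this is the standard dual of the cofinal-map definition. For each $\alpha<\lambda$ we fix such an unbounded map $\psi_\alpha:\mathbb{Q}_\alpha\to\mathbb{P}$. The plan is to combine them into a single map
$$\psi:\prod_{\alpha<\lambda}\mathbb{Q}_\alpha\to\mathbb{P}$$
and use the $\lambda^+$-directedness of $\mathbb{P}$ to take upper bounds. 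For each $f\in\prod_{\alpha<\lambda}\mathbb{Q}_\alpha$, the set $\{\psi_\alpha(f(\alpha))\mid\alpha<\lambda\}$ has size at most $\lambda$. By $\lambda^+$-directedness it therefore has an upper bound in $\mathbb{P}$, and we let $\psi(f)$ be some chosen such bound. Here the product carries the pointwise order.

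It remains to check that $\psi$ is unbounded, arguing by contraposition. Suppose $\mathcal{A}\subseteq\prod_{\alpha<\lambda}\mathbb{Q}_\alpha$ and $\psi''\mathcal{A}$ is bounded in $\mathbb{P}$, say by $p$. Fix $\alpha<\lambda$ and consider the projection $\mathcal{A}_\alpha=\{f(\alpha)\mid f\in\mathcal{A}\}\subseteq\mathbb{Q}_\alpha$. For each $f\in\mathcal{A}$ we have $\psi_\alpha(f(\alpha))\leq_P\psi(f)\leq_P p$, so $\psi_\alpha''\mathcal{A}_\alpha$ is bounded by $p$. Since $\psi_\alpha$ is unbounded, $\mathcal{A}_\alpha$ must be bounded in $\mathbb{Q}_\alpha$, say by $q_\alpha$. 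Using choice, the function $g=(q_\alpha)_{\alpha<\lambda}$ lies in the product and bounds $\mathcal{A}$ pointwise. Hence $\psi$ maps unbounded sets to unbounded sets, which gives $\prod_{\alpha<\lambda}\mathbb{Q}_\alpha\leq_T\mathbb{P}$.

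The only delicate point is the translation between cofinal maps and unbounded maps. If the paper insists on cofinal maps $Q\to P$, we first invoke the classical equivalence due to Tukey and Schmidt: a cofinal map $\varphi:\mathbb{P}\to\mathbb{Q}_\alpha$ yields an unbounded map $\mathbb{Q}_\alpha\to\mathbb{P}$ by sending $q$ to some $p$ with $q\not\leq\varphi(p')$ for all $p'\leq p$. Equivalently, we choose for $q$ any $p$ such that every $p'\geq p$ has $\varphi(p')\not\leq q$ fails appropriately. Rather than re-derive this, we will simply cite the equivalence, since it holds for arbitrary directed sets. Once it is in place, the argument above is routine, and the main conceptual input is that $\lambda^+$-directedness lets $\lambda$ many coordinates be bounded simultaneously.
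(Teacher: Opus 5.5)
Your proof is correct and is essentially the paper's argument. Like the paper, you combine the unbounded maps $\psi_\alpha$ by bounding $\{\psi_\alpha(f(\alpha))\mid\alpha<\lambda\}$ via $\lambda^+$-directedness, and you check unboundedness through coordinate projections; your argument is just the contrapositive of the paper's step ``some projection of an unbounded $\mathcal{A}$ is unbounded.''

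The only blemish is your sketch of the translation from cofinal maps to unbounded maps, which is garbled: the right choice is a $p$ such that $q\leq\varphi(p')$ for all $p'\geq p$. Since you, like the paper, simply invoke the standard equivalence, this does not affect the proof.
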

\begin{proof}
    Let $\psi_\alpha:\mathbb{Q}_\alpha\to\mathbb{P}$ be unbounded maps. Let $(q_\alpha)_{\alpha<\lambda}\in \prod_{\alpha<\lambda}\mathbb{Q}_\alpha$, and consider 
    $(\psi_{\alpha}(q_\alpha))_{\alpha<\lambda}\subseteq\mathbb{P}$. Since $\mathbb{P}$ is $\lambda^+$-directed, there is $p$ such that for every $\alpha$, $\phi_\alpha(q_\alpha)\leq p$. 
    Set $\psi((q_\alpha)_{\alpha<\lambda})=p$. Let us argue that $\psi:\prod_{\alpha<\lambda}\mathbb{Q}_\alpha\to\mathbb{P}$ is unbounded witnessing $\prod_{\alpha<\lambda}\mathbb{Q}_\alpha\leq_T\mathbb{P}$. 
    Indeed, if $\mathcal{A}\subseteq\prod_{\alpha<\lambda}\mathbb{Q}_\alpha$ is unbounded, then for some $\alpha$, $\mathcal{A}\restriction\{\alpha\}=\{q_\alpha\mid \vec{q}\in\mathcal{A}\}$ in unbounded in $\mathbb{Q}_\alpha$. Since $\psi_\alpha$ is unbounded in $\mathbb{Q}_\alpha$, $\psi_\alpha[\mathcal{A}\restriction\{\alpha\}]$ is unbounded in $\mathbb{P}$. 
    Since for each $\vec{q}$, $\psi(\vec{q})\geq q_\alpha$, we conclude that also $\{\psi(\vec{q})\mid \vec{q}\in \mathcal{A}\}$ must also be unbounded in $\mathbb{P}$.
\end{proof}

Next, let us introducing some notations from Pcf theory. 
Let $B$ be a set of regular cardinal without a maximal element. In this section we consider $\prod B$-- the set of all function $f:B\to \sup(B)$ such that for every $b\in B$, $f(b)<b$. Given an ideal $\mathcal{J}$ on $B$, we consider $\prod B/\mathcal{J}$ ordered by $f\leq_{\mathcal{J}}g$ if and only if $\{b\in B\mid f(b)> g(b)\}\in \mathcal{J}$. $f<_{\mathcal{J}}g$ is defined similarily. A \textit{scale} in $\prod B/\mathcal{J}$ is sequence $\l f_\alpha\mid \alpha<\lambda\r$ which is $<_{\mathcal{J}}$-increasing and $<_{\mathcal{J}}$-cofinal.
A scale does not always exist. If $\mathcal{J}$ is a maximal ideal (i.e. the dual filter is an ultrafilter), then a scale exist. Let us say that the \textit{true cofinality} $tcf(\prod B/\mathcal{J})$ is equal to $ \mu$ if $\mu$ is the least length of a scale in $\prod B/\mathcal{J}$.
\begin{remark}
    Let $\mathfrak{b}(\prod B/\mathcal{J})$ be the minimal size of a $\leq_{\mathcal{J}}$-unbounded set of functions $\mathcal{F}\subseteq \prod B$ and let $\mathfrak{d}(\prod B/\mathcal{J})$ be the minimal size of a $\leq_{\mathcal{J}}$-cofinal family $\mathcal{F}\subseteq \prod B$. The (existence) statement $tcf(\prod B/\mathcal{J})=\mu$ is equivalent to $\mathfrak{b}(\prod B/\mathcal{J})=\mu=\mathfrak{d}(\prod B/\mathcal{J})$. 
\end{remark}
For a set $A$ of regular cardinals we define
$$Pcf(A)=\{cf(\prod A/U)\mid U\text{ is an ultrafilter over }A\}$$ For more information on PCF theory we refer the reader to \cite{AbrahamMagidor2010,ShelahPCF}.
\begin{corollary}
    Suppose that $B\subseteq Sp_T(\mathbb{P})$ and $\mathbb{P}$ is $|B|^+$-directed. Then $Pcf(B)\subseteq Sp_T(\mathbb{P})$.
\end{corollary}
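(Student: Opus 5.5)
We derive this from Theorem \ref{Simple case}, followed by a Tukey reduction from the full product to a quotient by an ultrafilter. Let $\mu\in Pcf(B)$, witnessed by an ultrafilter $U$ over $B$ with $\mu=cf(\prod B/U)$. Since $U$ is an ultrafilter, the ordering $\leq_U$ on $\prod B$ is linear modulo $U$. Hence $\prod B/U$ has a scale $\l f_\alpha\mid\alpha<\mu\r$, and $\mu=tcf(\prod B/U)$ is a regular cardinal. The goal is to show $\mu\leq_T\mathbb{P}$, and we will do it in two steps:
$$\mu\leq_T \prod_{b\in B} b\leq_T \mathbb{P}.$$

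For the second inequality we apply Theorem \ref{Simple case}. Index $B$ by $|B|$ and take $\mathbb{Q}_b=(b,<)$ for $b\in B$. Each $b\in B$ lies in $Sp_T(\mathbb{P})$, so $\mathbb{Q}_b\leq_T\mathbb{P}$. By hypothesis $\mathbb{P}$ is $|B|^+$-directed. The theorem then gives that $\prod_{b\in B}(b,<)$, with the coordinatewise (everywhere) order, is Tukey below $\mathbb{P}$. This product is exactly $\prod B$ with the pointwise order.

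For the first inequality we exhibit an unbounded map $g:\mu\to\prod B$ (pointwise order), namely $g(\alpha)=f_\alpha$. Suppose $X\subseteq\mu$ is unbounded. We claim $\{f_\alpha\mid\alpha\in X\}$ is unbounded pointwise. If instead some $h\in\prod B$ bounded every $f_\alpha$ pointwise, then $f_\alpha\leq_U h$ for all $\alpha\in X$. Since the scale is $<_U$-increasing and $X$ is cofinal in $\mu$, this would give $f_\alpha\leq_U h$ for every $\alpha<\mu$. Now choose $\beta$ with $h<_U f_\beta$, which exists because the scale is cofinal. Then $f_{\beta+1}\leq_U h<_U f_\beta<_U f_{\beta+1}$, a contradiction. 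Composing the two reductions gives $\mu\leq_T\mathbb{P}$, so $\mu\in Sp_T(\mathbb{P})$.

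There are two points to check carefully. First, the existence of the scale: a $<_U$-cofinal well-ordered sequence of length $cf(\prod B/U)$ exists because $\prod B/U$ is linearly ordered, so the reduct is well defined. Second, a compatibility detail: Theorem \ref{Simple case} is phrased for a product indexed by $\lambda$ with $\mathbb{P}$ $\lambda^+$-directed. It applies here by reindexing $B$ via a bijection with $|B|$; no other issue arises. The argument never uses that $B$ has no maximum, so the degenerate case where $B$ has a maximum also goes through; in that case $\mu=\max B$ is already in $Sp_T(\mathbb{P})$.
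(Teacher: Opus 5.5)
Your proof is correct and is the paper's argument: Theorem \ref{Simple case} gives $\prod B\leq_T\mathbb{P}$, and you supply the verification, via a scale modulo $U$, of the step $Pcf(B)\subseteq Sp_T(\prod B)$ that the paper only calls easy. One small slip in your closing aside: when $B$ has a maximum, a non-principal $U$ concentrating on $B\setminus\{\max B\}$ can give $\mu\neq\max B$; this is harmless, since your main argument never needs $B$ to lack a maximum.
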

\begin{proof}
    By Theorem \ref{Simple case}, $\prod B\leq_T \mathbb{P}$, and it is easy to see that $Pcf(B)\subseteq Sp_T(\prod B)\subseteq Sp_T(\mathbb{P})$.
\end{proof}

The following proposition provides the  main tool to study the limit points of the spectrum above the directedness degree of a directed set $\mathbb{P}$. We will need to analyze scales modulo the bounded ideal $\mathcal{J}^\lambda_{bd}$-- the ideal consisting of all bounded subsets of $\lambda$. 

\begin{proposition}\label{Main tool}
    Suppose that $\lambda$ is in $acc(Sp_T(\mathbb{P}))$ and that there is $B\subseteq Sp_T(\mathbb{P})$ such that:
    \begin{enumerate}
        \item [(i)] $B$ is unbounded in $\lambda$.
        \item [(ii)] $cf(|B|)=|B|\notin Sp_T(\mathbb{P})$.
        \item [(iii)] $tcf(\prod B/\mathcal{J}^\lambda_{bd})=\mu$.
    \end{enumerate}  
Then $\mu\in Sp_T(\mathbb{P})$.
\end{proposition}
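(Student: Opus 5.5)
Since $\mu$ is regular (being a true cofinality), it suffices to build a map $\psi:\prod B\to\mathbb{P}$ that sends every $\leq_{\mathcal{J}^\lambda_{bd}}$-unbounded family to an unbounded family, and then compose with a scale. Concretely, fix a scale $\langle f_\xi\mid\xi<\mu\rangle$ in $\prod B/\mathcal{J}^\lambda_{bd}$. For each $b\in B$ fix an unbounded map $\psi_b:b\to\mathbb{P}$ witnessing $b\leq_T\mathbb{P}$. Replacing $\psi_b(\alpha)$ by an upper bound of $\{\psi_b(\beta)\mid\beta\leq\alpha\}$ is not directly available, so we use the standard fact that $\psi_b$ is unbounded exactly when $\psi_b[X]$ is unbounded in $\mathbb{P}$ for every $X\in[b]^b$. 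We then aim to show that $\xi\mapsto$ (some point attached to $f_\xi$) is an unbounded map $\mu\to\mathbb{P}$.

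The natural choice is to attach to $f\in\prod B$ the family $\{\psi_b(f(b))\mid b\in B\}$. This family has size $|B|$, and $\mathbb{P}$ need not be $|B|^+$-directed, so we cannot simply take an upper bound as in Theorem~\ref{Simple case}. Instead we use hypothesis (ii). Set $\kappa=|B|$, enumerate $B$ increasingly as $\langle b_i\mid i<\kappa\rangle$ (possible after thinning, since $\kappa=cf(\lambda)$-type issues can be handled by taking $B$ of order type $\kappa$; by regularity of $\kappa$ and unboundedness we may assume $\operatorname{otp}(B)=\kappa$). Since $\kappa\notin Sp_T(\mathbb{P})$, every $\kappa$-sequence in $\mathbb{P}$ has a subset of size $\kappa$ that is bounded. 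Indeed, otherwise the sequence itself would be an unbounded map from $\kappa$ witnessing $\kappa\leq_T\mathbb{P}$. So for each $\xi<\mu$ choose $X_\xi\in[\kappa]^\kappa$ and $p_\xi\in\mathbb{P}$ with $\psi_{b_i}(f_\xi(b_i))\leq p_\xi$ for all $i\in X_\xi$. Define $\psi(\xi)=p_\xi$.

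Next we show that $\xi\mapsto p_\xi$ is unbounded, i.e.\ for $Y\in[\mu]^\mu$ the set $\{p_\xi\mid\xi\in Y\}$ is unbounded. Suppose toward a contradiction that $p_\xi\leq p$ for all $\xi\in Y$. For each $i<\kappa$, the set $A_i=\{\alpha<b_i\mid\psi_{b_i}(\alpha)\leq p\}$ has size $<b_i$, hence is bounded in $b_i$ by regularity. Let $g(b_i)=\sup A_i+1<b_i$, so $g\in\prod B$. Since $\{f_\xi\mid\xi\in Y\}$ is cofinal and $Y$ is unbounded in $\mu$, there is $\xi\in Y$ with $g<_{\mathcal{J}^\lambda_{bd}}f_\xi$. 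That is, $f_\xi(b_i)>g(b_i)$ for all $i$ beyond some $i_0$, and so $\psi_{b_i}(f_\xi(b_i))\not\leq p$ there. But $X_\xi$ is unbounded in $\kappa$, so there is $i\in X_\xi$ with $i>i_0$, and then $\psi_{b_i}(f_\xi(b_i))\leq p_\xi\leq p$, a contradiction. Hence $\mu\leq_T\mathbb{P}$.

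The main obstacle is the bookkeeping between $\mathcal{J}^\lambda_{bd}$ (bounded subsets of $\lambda$) and the index set $\kappa$. We need that a tail of $B$ in $\lambda$ meets every $X_\xi$ viewed as a subset of $B$, which requires $B$ to have order type exactly $\kappa=|B|$ so that the sets of size $\kappa$ in $B$ are exactly the unbounded ones. I would first try to arrange this by intersecting $B$ with a cofinal subset of order type $cf(\lambda)$. I would then check that true cofinality modulo the bounded ideal is preserved when passing to such a cofinal subset, which holds because restriction to a cofinal subset is compatible with $\mathcal{J}_{bd}$. The one point to verify is whether hypothesis (ii) forces $|B|=cf(\lambda)$ or whether order type $|B|$ can be assumed outright.
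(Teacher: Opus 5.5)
Your core argument is the paper's. You fix witnesses $\psi_b$ for $b\in B$ and a scale $\langle f_\xi\mid\xi<\mu\rangle$. You use $|B|\notin Sp_T(\mathbb{P})$ to get $X_\xi$ of size $|B|$ on which $\psi_b(f_\xi(b))$ is bounded by $p_\xi$. Then a function in $\prod B$ refutes any bound on $\{p_\xi\mid\xi\in Y\}$. Your $g(b)=\sup\{\alpha<b\mid\psi_b(\alpha)\leq p\}+1$ is a slightly more direct form of the paper's claim, whose $\alpha_\rho$ bounds a subset of your set. When every subset of $B$ of size $|B|$ is unbounded in $\lambda$, i.e.\ $\text{otp}(B)=|B|$, this is a complete proof.

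The gap is the step you flagged, and it is a real one. The claim ``by regularity of $\kappa$ and unboundedness we may assume $\text{otp}(B)=\kappa$'' is false when $|B|>cf(\lambda)$. Any cofinal subset of $\lambda$ of regular order type has order type $cf(\lambda)$, so thinning to order type $|B|$ is impossible. Thinning to order type $cf(\lambda)$ preserves (i) and (iii), but turns (ii) into a hypothesis about $cf(\lambda)$, which is not given. (When $|B|=cf(\lambda)$ the thinning is harmless.) To answer your question: (ii) does not force $|B|=cf(\lambda)$, and in that case ``$X_\xi\in[B]^{|B|}$, hence unbounded in $\lambda$'' fails. The paper's proof makes exactly this inference (``$I_\beta\in[B]^{|B|}$ (in particular $I_\beta$ is unbounded in $\lambda$)'').

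In fact the statement is false in that case. Here is a counterexample.
\begin{itemize}
\item Let $A=\{\aleph_{\alpha+2}\mid\alpha<\omega_1\}$ and let $\mathbb{Q}_1=\prod A$ with the coordinatewise order. It is $\aleph_2$-directed, $A\subseteq Sp_T(\mathbb{Q}_1)$, and every member of $Sp_T(\mathbb{Q}_1)$ is at most $\chi(\mathbb{Q}_1)\leq 2^{\aleph_{\omega_1}}$.
\item Take $\lambda>2^{\aleph_{\omega_1}}$ of cofinality $\omega$, and regular $\kappa_0<\kappa_1<\cdots$ cofinal in $\lambda$ with $\kappa_0>2^{\aleph_{\omega_1}}$ and $tcf(\prod_n\kappa_n/\mathrm{fin})=\lambda^+$ (Shelah).
\item Let $\mathbb{Q}_2$ be the finitely supported members of $\prod_n\kappa_n$ under the pointwise order. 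Given $\theta$ elements of $\mathbb{Q}_2$, with $\theta$ uncountable regular and $\theta\notin\{\kappa_n\mid n<\omega\}$, $\theta$ of them share a finite support $s$. One then shrinks coordinate by coordinate along $s$ to a bounded subfamily of size $\theta$. Hence $\aleph_1,\lambda^+\notin Sp_T(\mathbb{Q}_2)$.
\item Shrinking twice shows $Sp_T(\mathbb{Q}_1\times\mathbb{Q}_2)=Sp_T(\mathbb{Q}_1)\cup Sp_T(\mathbb{Q}_2)$.
\item Let $\mathbb{P}=\mathbb{Q}_1\times\mathbb{Q}_2$ and $B=A\cup\{\kappa_n\mid n<\omega\}$. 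Then $\lambda\in acc(Sp_T(\mathbb{P}))$ and (i)--(iii) hold, with $|B|=\aleph_1\notin Sp_T(\mathbb{P})$. Also $\mu=\lambda^+$, since $A\in\mathcal{J}^\lambda_{bd}$. Yet $\lambda^+\notin Sp_T(\mathbb{P})$.
\end{itemize}

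So the correct hypothesis is $\text{otp}(B)=|B|$, or equivalently (after thinning $B$) $cf(\lambda)\notin Sp_T(\mathbb{P})$ in place of (ii). This is the form in which the paper actually applies the proposition. With that hypothesis, and $B$ thinned to order type $cf(\lambda)$, your proof goes through verbatim.
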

\begin{proof}
       For each $\rho\in B$, let $\l p_{\alpha,\rho}\mid \alpha<\rho\r$ witness that $\rho\in Sp_T(\mathbb{P})$ that is:
       \begin{equation}\label{equation Tukey}
           \forall S\in [\rho]^\rho, \ \{p_{\alpha,\rho}\mid \alpha\in S\}\text{ is unbounded in }\mathbb{P}.
       \end{equation} Let $\l f_\beta \mid \beta<\mu\r$ be a scale modulo $\mathcal{J}^B_{bd}$. For every $\beta<\mu$, consider $\{p_{f_\beta(\rho),\rho}\mid \rho\in B\}$. Note that $f_\beta(\rho)<\rho$ hence $p_{f_\beta(\rho),\rho}$ is well defined. Since $|B|\notin Sp_T(\mathbb{P})$, there is $I_\beta\in [B]^{|B|}$ (in particular $I_\beta$ is unbodned in $\lambda$) such that $\{p_{f_\beta(\rho),\rho}\mid \rho\in I_\beta\}$ is bounded by some $p^*_\beta$. 
We claim that $\l p^*_\beta\mid \beta<\mu\r$ witness that $\mu \in Sp_T(\mathbb{P})$. Otherwise, there is $I\in \mu$ such that $\{p^*_\beta\mid \beta\in I\}$ is bounded by $p^*$.

\begin{claim}
    There is $\rho\in B$ such that $\{f_\beta(\rho)\mid \beta\in I, \rho\in I_\beta\}$ is unbounded in $\rho$. 
\end{claim}
\begin{proof}
    Otherwise, for every $\rho\in B$, $\{f_\beta(\rho)\mid \beta\in I, \rho\in I_\beta\}$ is bounded by $\alpha_\rho<\rho$. The function $f(\rho)=\alpha_\rho$ is then bounded modulo $\mathcal{J}^B_{bd}$ by some $f_\beta$ for $\beta<\mu$ and since $I$ is unbounded in $\mu$, we may assume that $\beta\in I$.  Namely, $f<_{\mathcal{J}^B_{bd}} f_{\beta}$. Since $I_{\beta}$ is unbounded in $\lambda$, there is $\rho_{\beta}\in I_{\beta}$ such that $f(\rho_{\beta})<f_{\beta}(\rho_{\beta})$. 

    But then $\beta\in I$ and $\rho_\beta\in I_\beta$ and therefore $f_\beta(\rho_\beta)\leq \alpha_\beta=f(\rho_\beta)<f_\beta(\rho_\beta)$, contradiction.
\end{proof}
Fix $\rho$ as in the claim. 
Then by (\ref{equation Tukey}), $\{p_{f_{\beta}(\rho),\rho}\mid \beta\in I,\rho\in I_\beta\}$ must be unbounded. On the other hand, it is bounded by $p^*$. Contradiction.
    
\end{proof}
To show some specific cases of interest. Let us start with singular cardinals and some basic PCF theory. Recall that for a set $A$ of regular cardinals and for every cardinal $\rho$, we define the ideal $\mathcal{J}_{<\rho}^A$ to consist of all $X\subseteq A$ such that $Pcf(X)\subseteq \rho$. Equivalently, for every ultrafilter $U$ over $A$ such that $X\in U$, $cf(\prod A/U)<\rho$. Here are some properties of $\mathcal{J}^{A}_{<\rho}$ which can be found in \cite{AbrahamMagidor2010}:
\begin{enumerate}
    \item $\mathcal{J}^A_{<\rho}$ is increasing and continuous with $\rho$.
    \item $\bigcup_{\rho}\mathcal{J}^A_{<\rho}=P(A)$.
\end{enumerate}
\begin{theorem}[PCF generators]
Let $A$ be a progressive set (i.e. $|A|<\min(A)$). There are sets $\{B^A_\theta\mid\theta\in Pcf(A)\}$ such that:
\begin{enumerate}
        \item $\mathcal{J}^A_{<\rho}$ is generated by $\{B^A_\theta\mid \theta\in Pcf(A)\cap\rho\}$.
        \item If $\theta\in Pcf(A)$ then  $tcf(\prod B^A_\theta/\mathcal{J}^A_{<\theta})=\theta$.
    \end{enumerate}
\end{theorem}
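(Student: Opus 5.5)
The statement is the standard existence theorem for PCF generators (Shelah), quoted from \cite{AbrahamMagidor2010}. I would prove it along the classical route, with $A$ progressive throughout.

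\textbf{Step 1: directedness and the $\mathcal{J}_{<\lambda}$ theorem.} First I would show that $\prod A/\mathcal{J}^A_{<\lambda}$ is $\lambda$-directed for every cardinal $\lambda$. Given $\mathcal{F}\subseteq\prod A$ with $|\mathcal{F}|<\lambda$, I would find a single $g$ bounding every $f\in\mathcal{F}$ modulo $\mathcal{J}^A_{<\lambda}$. This goes by induction on $|\mathcal{F}|$. The case $|\mathcal{F}|\le|A|$ is handled by pointwise suprema, which works because $|A|<\min A$. Larger regular $|\mathcal{F}|$ is handled by iterating bounds along a sequence and exploiting progressiveness; this is the usual ``exact upper bound'' argument.

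From directedness I would get the key fact: for every ultrafilter $U$ on $A$,
\[
\cf(\prod A/U)<\lambda \iff \mathcal{J}^A_{<\lambda}\cap U=\emptyset \ \text{fails, i.e.}\ U\cap\mathcal{J}^A_{<\lambda}\neq\emptyset .
\]
In addition, $\lambda=\cf(\prod A/U)$ exactly when $U$ meets $\mathcal{J}^A_{<\lambda^+}\setminus\mathcal{J}^A_{<\lambda}$.

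\textbf{Step 2: a $\lambda$-scale modulo $\mathcal{J}^A_{<\lambda}$.} For $\lambda\in Pcf(A)$ I would build a sequence $\langle f_\xi\mid\xi<\lambda\rangle$ that is $<_{\mathcal{J}^A_{<\lambda}}$-increasing and cofinal in $\prod A/U$ for every ultrafilter $U$ with $\cf(\prod A/U)=\lambda$. The construction uses the directedness from Step 1 at each successor and limit stage, together with a club-guessing or ``universal sequence'' argument to catch cofinality. The point is that only $2^{|A|}<\lambda$ many ultrafilters exist when $\lambda$ is above $2^{|A|}$. The general case is handled by Shelah's trick of taking exact upper bounds at points of cofinality $>|A|$ and using $|A|^+<\min A$.

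\textbf{Step 3: extracting the generator.} Given the universal sequence, I would take its exact upper bound $h$, which exists by the progressive bounded-limit argument, and set
\[
B^A_\lambda=\{a\in A\mid h(a)=a\}.
\]
I would then verify two things. First, $\mathcal{J}^A_{<\lambda^+}=\mathcal{J}^A_{<\lambda}+B^A_\lambda$: any $U$ concentrating off $B_\lambda$ has cofinality $<\lambda$ because it is bounded by $h$, while $U\ni B_\lambda$ gives cofinality $\lambda$. Second, $tcf(\prod B_\lambda/\mathcal{J}^A_{<\lambda})=\lambda$, witnessed by the $f_\xi$.

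\textbf{Step 4: generation by the $B_\theta$.} Property (1) would then follow by induction on $\rho$, using continuity of $\mathcal{J}^A_{<\rho}$ at limits and compactness at successors. A set $X\in\mathcal{J}^A_{<\rho}$ is covered by finitely many $B_\theta$ with $\theta<\rho$. Otherwise the family $\{X\setminus B_\theta\mid\theta\in Pcf(A)\cap\rho\}$ has the finite intersection property, and the resulting ultrafilter has cofinality some $\theta<\rho$ while avoiding $B_\theta$, a contradiction.

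I expect the main obstacle to be Step 2/3: producing the exact upper bound. This requires the ``trichotomy'' theorem for sequences of length of cofinality greater than $|A|^+$.
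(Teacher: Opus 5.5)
Your route is the standard Shelah/Abraham--Magidor proof, which is all the paper offers for this statement: it quotes the theorem with a citation and gives no argument. That route consists of $\lambda$-directedness of $\prod A/\mathcal{J}^A_{<\lambda}$, a universal sequence for each $\lambda\in Pcf(A)$, $B^A_\lambda=\{a\in A\mid h(a)=a\}$ for an exact upper bound $h$ of that sequence, and compactness for (1).

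Your Step 4 is correct, and so is your Step 3 verification once an eub is in hand. In Step 1 the ``in addition'' clause should read: $\cf(\prod A/U)=\lambda$ iff $U\cap\mathcal{J}^A_{<\lambda^+}\neq\emptyset=U\cap\mathcal{J}^A_{<\lambda}$. As you wrote it, it fails, because an ultrafilter meeting $\mathcal{J}^A_{<\lambda}$ also meets $\mathcal{J}^A_{<\lambda^+}\setminus\mathcal{J}^A_{<\lambda}$ whenever that difference is nonempty.

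The real gap is the existence of $h$. You say the universal sequence has an eub ``by the progressive bounded-limit argument'', but no such argument works for an arbitrary universal sequence. Trichotomy only says that a $<_{\mathcal{J}^A_{<\lambda}}$-increasing $\lambda$-sequence is Good, Bad or Ugly. Bad and Ugly are ruled out automatically when $\lambda>2^{|A|}$, since then there are fewer than $\lambda$ subsets of $A$ and fewer than $\lambda$ functions in $\prod_a S(a)$ with $|S(a)|\le|A|$; but they are not ruled out in general. (That count, $2^{|A|}$, is also the relevant one in your Step 2; there are $2^{2^{|A|}}$ ultrafilters on $A$, not $2^{|A|}$.) Progressiveness allows $2^{|A|}\ge\min A$, and that is exactly where the theorem goes beyond the easy case.

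The missing idea is that the universal sequence must be \emph{constructed} to satisfy $(*)_\kappa$ for some regular $\kappa$ with $|A|<\kappa<\lambda$. Shelah's $(*)_\kappa$ criterion then gives the eub. In the standard treatment:
\begin{itemize}
\item one takes $\kappa=|A|^+$ and fixes a club-guessing sequence $\langle C_\delta\mid\delta\in S^\lambda_\kappa\rangle$ on $S^\lambda_\kappa=\{\delta<\lambda\mid\cf(\delta)=\kappa\}$, which needs $\kappa^+<\lambda$;
\item for $\delta\in S^\lambda_\kappa$ one lets $f_\delta$ be the pointwise supremum of $\langle f_\xi\mid\xi\in C_\delta\rangle$;
\item one obtains universality for a sequence of this kind.
\end{itemize}
So at points of cofinality $\kappa$ one takes a pointwise supremum along a guessed club, not an exact upper bound as your Step 2 says. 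Club guessing is needed precisely to secure $(*)_\kappa$, and hence the eub.

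That supremum lies in $\prod A$ only if $\kappa<\min A$, whereas progressiveness gives only $|A|^+\le\min A$. You must therefore first set aside the at most two members of $A$ that are $\le|A|^{++}$. Every nonprincipal ultrafilter on $A$ contains the cofinite set of members above $|A|^{++}$, so it has cofinality at least $|A|^{+3}$. Hence those small cardinals are realized only by principal ultrafilters and have singleton generators. For the remaining $\lambda$ one has both $\kappa<\min A$ and $\kappa^+<\lambda$. This is what justifies the assumption $|A|^+<\min A$ that you use without comment.
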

Using the PCF generator we can prove the following lemma. The author would like to thank James Cummings for showing him the proof.
\begin{lemma}
    Let $A$ be a progressive set. Then there is $\mu\in Pcf(A)\setminus \sup(A)$ such that $B^A_\mu$ is unbounded in $\sup(A)$ and $\mathcal{J}^A_{<\mu}\subseteq \mathcal{J}^{\sup(A)}_{bd}$. 
\end{lemma}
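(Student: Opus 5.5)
We will take $\mu$ to be the least element of $Pcf(A)$ whose generator is unbounded, that is,
$$\mu=\min\{\theta\in Pcf(A)\mid B^A_\theta \text{ is unbounded in }\sup(A)\},$$
and then check the three required properties. Throughout we assume, as is implicit in the statement, that $A$ has no maximal element. Otherwise $\mathcal{J}^{\sup(A)}_{bd}$-boundedness is degenerate.

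\emph{Step 1: the set over which we minimize is nonempty.} By the listed properties of the ideals, $\bigcup_\rho\mathcal{J}^A_{<\rho}=P(A)$. So $A\in\mathcal{J}^A_{<\rho}$ for some $\rho$. By the PCF generators theorem, $\mathcal{J}^A_{<\rho}$ is the ideal generated by $\{B^A_\theta\mid\theta\in Pcf(A)\cap\rho\}$. Hence $A$ is covered by finitely many generators $B^A_{\theta_1},\dots,B^A_{\theta_n}$. A finite union of bounded subsets of $\sup(A)$ is bounded, while $A$ itself is unbounded. Therefore some $B^A_{\theta_i}$ is unbounded, and $\mu$ is well defined.

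\emph{Step 2: $\mathcal{J}^A_{<\mu}\subseteq\mathcal{J}^{\sup(A)}_{bd}$.} By minimality of $\mu$, every generator $B^A_\theta$ with $\theta\in Pcf(A)\cap\mu$ is bounded in $\sup(A)$. Every member of $\mathcal{J}^A_{<\mu}$ is contained in a finite union of such generators, so it is bounded. Together with Step 1 this also shows that $B^A_\mu$ is unbounded.

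\emph{Step 3: $\mu\notin\sup(A)$.} This is the step that needs an additional fact about generators. We claim that $B^A_\theta\cap(\theta,\sup(A))=\emptyset$ for every $\theta\in Pcf(A)$.
\begin{itemize}
\item Put $X=B^A_\theta\setminus(\theta+1)$. Every ultrafilter $U$ containing $X$ concentrates on cardinals $>\theta$, so $cf(\prod A/U)>\theta$.
\item On the other hand $B^A_\theta\in\mathcal{J}^A_{<\theta^+}$, because it is one of the generators of that ideal. Hence every ultrafilter containing $B^A_\theta\supseteq X$ has cofinality $<\theta^+$, i.e. $\leq\theta$.
\item No ultrafilter can satisfy both, so no ultrafilter contains $X$, which forces $X=\emptyset$.
\end{itemize}
Consequently, if $\theta\in Pcf(A)$ and $\theta<\sup(A)$, then $B^A_\theta\subseteq\theta+1$ is bounded in $\sup(A)$. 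Hence $\mu\geq\sup(A)$.

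Finally, $\sup(A)$ is singular. Indeed, $A$ has no maximum, so $cf(\sup(A))\leq|A|<\min(A)<\sup(A)$. Every element of $Pcf(A)$ is a regular cardinal, so $\mu\neq\sup(A)$. Hence $\mu>\sup(A)$, i.e. $\mu\in Pcf(A)\setminus\sup(A)$.

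The main obstacle is the boundedness fact in Step 3. It relies on the precise meaning of the generators, namely that $B^A_\theta\in\mathcal{J}^A_{<\theta^+}$. That membership follows from the generating property at $\rho=\theta^+$ in the PCF generators theorem, which is the step I would double check against the cited source.
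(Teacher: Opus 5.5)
Your proof is correct. Its skeleton is the same as the paper's: take $\mu$ minimal in $Pcf(A)$ with $B^A_\mu$ unbounded, then read off $\mathcal{J}^A_{<\mu}\subseteq\mathcal{J}^{\sup(A)}_{bd}$ from the generating property. The difference is in how you get $\mu\geq\sup(A)$.

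The paper argues by contradiction and uses clause (2) of the generator theorem. If $\mu<\sup(A)$, minimality makes $B^A_\mu\setminus(\mu+1)$ unbounded, hence $\mathcal{J}^A_{<\mu}$-positive. On that set, the pointwise supremum (plus one) of a scale $\langle f_i\mid i<\mu\rangle$ in $\prod B^A_\mu/\mathcal{J}^A_{<\mu}$ stays below each regular $b>\mu$. This function escapes every $f_i$, contradicting cofinality of the scale.

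You instead apply clause (1) at $\rho=\theta^+$ to get $B^A_\theta\in\mathcal{J}^A_{<\theta^+}$. From this you deduce the unconditional fact $B^A_\theta\subseteq\theta+1$ for every $\theta\in Pcf(A)$. Already the principal ultrafilter at any $b>\theta$ in $B^A_\theta$ gives the contradiction, since its cofinality is $b$.

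What each route buys:
\begin{itemize}
\item Yours is shorter and uses only clause (1) of the generator theorem throughout. It needs neither the minimality of $\mu$ nor a scale for this step, and it yields a reusable structural bound on every generator.
\item The paper's is a direct contradiction at the single $\mu$ in question. It uses the scale on $B^A_\mu$, which the subsequent corollary needs anyway.
\end{itemize}

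You also fill in two points the paper leaves implicit or postpones. One is that the set over which $\mu$ is minimized is nonempty. The other is the strict inequality $\mu>\sup(A)$, which follows from the singularity of $\sup(A)$; the paper only notes this in the next corollary.
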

\begin{proof}
    
Let $\mu\in Pcf(A)$ be minimal such that $B^A_\mu$ is unbounded in $\sup(A)$.  
    Then by property $(1)$ of the PCF generators, $\mathcal{J}_{<\mu}\subseteq \mathcal{J}^{\sup(A)}_{bd}$, and by $(2)$ there is a scale $\l f_\alpha\mid \alpha<\mu\r$ in $\prod B^A_\mu/\mathcal{J}_{<\mu}$. Let us argue that $\mu\geq \sup(A)$. Otherwise, since $\mathcal{J}_{<\mu}\subseteq \mathcal{J}^\lambda_{bd}$, $B^A_\mu\setminus \mu+1\in \mathcal{J}_{<\mu}^+$. Define for $b\in B^A_\mu\setminus \mu+1$  $g(b)=\sup_{i<\mu}f_i(b)<b$ or $0$ otherwise. Then for every $i<\mu$, $g(b)>f_i(b)$ on a positive set which in turn implies that $g\not\leq_{\mathcal{J}_{<\mu}}f_i$, contradiction. 
\end{proof}
The following corollary improves Isbell's Theorem \ref{thm more isbell} items $(1)$.
\begin{corollary}
    Suppose that $\lambda\in acc(Sp_T(\mathbb{P}))$ is such that $\lambda>\cf(\lambda)\notin Sp_T(\mathbb{P})$. Then $Sp_T(\mathbb{P})\cap(\lambda,2^\lambda]\neq \emptyset$. 
\end{corollary}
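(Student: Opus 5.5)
Let $\kappa=cf(\lambda)<\lambda$; by hypothesis $\kappa\notin Sp_T(\mathbb{P})$. The plan is to build a progressive set $A\subseteq Sp_T(\mathbb{P})\cap\lambda$ of size exactly $\kappa$ and cofinal in $\lambda$. Then the preceding Lemma yields a generator $B^A_\mu$, and Proposition \ref{Main tool} applied to $B=B^A_\mu$ gives $\mu\in Sp_T(\mathbb{P})$. Finally we bound $\mu$ between $\lambda$ and $2^\lambda$.

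\textbf{Step 1 (choosing $A$).} Since $\lambda$ is a limit point of $Sp_T(\mathbb{P})$ and $\kappa<\lambda$, we can choose an increasing sequence $\langle\rho_i\mid i<\kappa\rangle$ of members of $Sp_T(\mathbb{P})$, cofinal in $\lambda$, with $\rho_0>\kappa$. Set $A=\{\rho_i\mid i<\kappa\}$. Then $|A|=\kappa<\min(A)$, so $A$ is progressive, and $\sup(A)=\lambda$.

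\textbf{Step 2 (generator and scale).} By the preceding Lemma there is $\mu\in Pcf(A)\setminus\lambda$ such that $B:=B^A_\mu$ is unbounded in $\lambda$ and $\mathcal{J}^A_{<\mu}\subseteq\mathcal{J}^{\lambda}_{bd}$. By property (2) of the PCF generators there is a scale of length $\mu$ in $\prod B/\mathcal{J}^A_{<\mu}$. We need a scale modulo the bounded ideal, as (iii) of Proposition \ref{Main tool} requires.

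Since $\mathcal{J}^A_{<\mu}\restriction B\subseteq\mathcal{J}^B_{bd}$ and both are proper on $B$, a $<_{\mathcal{J}^A_{<\mu}}$-increasing sequence is also $<_{\mathcal{J}^B_{bd}}$-increasing. Cofinality, however, goes the wrong way. Two routes are available:
\begin{itemize}
\item[(a)] Check that the proof of Proposition \ref{Main tool} uses only that the $f_\beta$ are $\leq_{\mathcal{J}}$-cofinal for some ideal $\mathcal{J}$ contained in the bounded ideal. In the Claim we need $f<_{\mathcal{J}}f_\beta$ to produce $\rho\in I_\beta$ with $f(\rho)<f_\beta(\rho)$. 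The set where this fails is in $\mathcal{J}\subseteq\mathcal{J}_{bd}$, while $I_\beta$ is unbounded, so such $\rho$ exists. Hence the Proposition holds verbatim with $\mathcal{J}^A_{<\mu}$ in place of $\mathcal{J}^B_{bd}$.
\item[(b)] Observe directly that the scale is also cofinal mod bounded.
\end{itemize}
I would take route (a), since the Proposition's proof transparently works for any ideal inside $\mathcal{J}_{bd}$.

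We also need $|B|$ regular and not in the spectrum. Because $B$ is unbounded in $\lambda$ and $|B|\le\kappa=cf(\lambda)$, we get $|B|=\kappa$, which is regular and $\notin Sp_T(\mathbb{P})$ by hypothesis. Hypotheses (i) and (ii) therefore hold, and $\mu\in Sp_T(\mathbb{P})$.

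\textbf{Step 3 (bounds).} We have $\mu\ge\lambda$, and $\mu\ne\lambda$ since $\mu$ is regular and $\lambda$ is singular, so $\mu>\lambda$. For the upper bound, $\mu=tcf\le|\prod A|\le\lambda^{\kappa}\le 2^\lambda$. Hence $\mu\in Sp_T(\mathbb{P})\cap(\lambda,2^\lambda]$.

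The main obstacle is the ideal mismatch in Step 2: the scale lives modulo $\mathcal{J}^A_{<\mu}$, not modulo the bounded ideal. It is resolved by noting that the proof of Proposition \ref{Main tool} tolerates any ideal contained in $\mathcal{J}_{bd}$.
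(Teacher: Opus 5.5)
Your proof is correct and is essentially the paper's argument. You take a progressive $A\subseteq Sp_T(\mathbb{P})$ of order type $cf(\lambda)$ cofinal in $\lambda$, use the Lemma to obtain $\mu$ and $B=B^A_\mu$ (being an unbounded subset of $A$, $B$ has order type $cf(\lambda)$, so every $I_\beta\in[B]^{|B|}$ is unbounded), and apply Proposition \ref{Main tool}; you also supply the bound $\mu\le|\prod A|\le 2^\lambda$ that the paper leaves implicit. One correction: the ``ideal mismatch'' is not a real obstacle, because passing to a larger ideal preserves both being increasing and being cofinal (if $g\le_{\mathcal{J}^A_{<\mu}}f_\beta$ then $g\le_{\mathcal{J}^\lambda_{bd}}f_\beta$, since $\mathcal{J}^A_{<\mu}\subseteq\mathcal{J}^\lambda_{bd}$), so your route (b) is immediate and is exactly how the paper gets $tcf(\prod B^A_\mu/\mathcal{J}^\lambda_{bd})=\mu$, while route (a) is valid but unnecessary.
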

\begin{proof}
    By the assumption on $\lambda$, there is a progressive set $A\subseteq Sp_T(\mathbb{P})$ such that $\text{otp}(A)=\cf(\lambda)$ and $\sup(A)=\lambda$, Then by the previous lemma, there is $B\subseteq A$ unbounded in $\lambda$, and $\mu\in Pcf(A)\setminus\lambda$ such that $\mathcal{J}^A_{<\mu}\subseteq \mathcal{J}^\lambda_{bd}$. Note that $\text{otp}(B)=cf(\lambda)$ and $\mu>\lambda$ (since $\mu$ is regular while $\lambda$ is singular). By the properties of PCF generators,
    $tcf(\prod B^A_\mu/\mathcal{J}^A_{<\theta})$ exists and equals $\mu$. Hence $tcf(\prod B^A_\mu/\mathcal{J}^\lambda_{bd})=\mu$.  By Proposition \ref{Main tool} we conclude that $\mu\in Sp_T(\mathbb{P})\cap (\lambda,2^\lambda]$.
\end{proof}
The second corollary provides more restrictions on the $\sup(Sp_T(\mathbb{P}))$.
\begin{corollary}
    If $\sup(Sp_T(\mathbb{P}))$ is a singular cardinal, then $cf(\sup(Sp_T(\mathbb{P})))\in Sp_T(\mathbb{P})$.
\end{corollary}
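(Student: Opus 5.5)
Let $\lambda=\sup(Sp_T(\mathbb{P}))$, assume $\lambda$ is singular, and suppose toward a contradiction that $\kappa=cf(\lambda)\notin Sp_T(\mathbb{P})$. The plan is to feed the previous corollary a set of points of $Sp_T(\mathbb{P})$ accumulating to $\lambda$, and to see that it produces a member of the spectrum above the supremum of the spectrum.

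First we check that the hypotheses of that corollary hold. Since $\lambda$ is singular it is a limit cardinal, hence not a regular cardinal, so $\lambda\notin Sp_T(\mathbb{P})$. On the other hand $\lambda$ is the supremum of $Sp_T(\mathbb{P})$ (and $Sp_T(\mathbb{P})$ is nonempty in the nontrivial case, for instance $cf(\chi(\mathbb{P}))\in Sp_T(\mathbb{P})$). Hence $Sp_T(\mathbb{P})\cap\lambda$ is unbounded in $\lambda$, i.e. $\lambda\in acc(Sp_T(\mathbb{P}))$. Together with $\lambda>cf(\lambda)\notin Sp_T(\mathbb{P})$, these are exactly the hypotheses of the preceding corollary.

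That corollary then yields some $\mu\in Sp_T(\mathbb{P})\cap(\lambda,2^\lambda]$. In particular $\mu>\lambda=\sup(Sp_T(\mathbb{P}))$, which is impossible since every element of $Sp_T(\mathbb{P})$ is at most its supremum. This contradiction shows $cf(\lambda)\in Sp_T(\mathbb{P})$.

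The only delicate point is inside the preceding corollary itself, not in this deduction. There one chooses a progressive $A\subseteq Sp_T(\mathbb{P})$ cofinal in $\lambda$ of order type $cf(\lambda)$. This is possible because $cf(\lambda)<\lambda$: discard an initial segment so that $\min(A)>cf(\lambda)=|A|$. One then applies the lemma on PCF generators to obtain $B^A_\mu$ unbounded with $tcf=\mu$ modulo $\mathcal{J}^\lambda_{bd}$, and finally Proposition \ref{Main tool}, using $|B^A_\mu|=cf(\lambda)\notin Sp_T(\mathbb{P})$. Note that $B^A_\mu$, being unbounded in $\lambda$ and contained in $A$, has size exactly $cf(\lambda)$, which is regular, so hypothesis (ii) of Proposition \ref{Main tool} holds. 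Since we may invoke the corollary as stated, the argument above is complete.
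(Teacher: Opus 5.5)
Your proof is correct and takes the same route as the paper. The paper states this corollary without a separate proof, as an immediate consequence of the preceding corollary: if $\lambda=\sup(Sp_T(\mathbb{P}))$ were singular with $cf(\lambda)\notin Sp_T(\mathbb{P})$, that corollary would place a member of $Sp_T(\mathbb{P})$ in $(\lambda,2^\lambda]$, contradicting that $\lambda$ is the supremum. Your checks that $\lambda\in acc(Sp_T(\mathbb{P}))$ and that $B^A_\mu\subseteq A$ has regular size $cf(\lambda)$ fill in details the paper leaves implicit. In fact $B^A_\mu$ has order type $cf(\lambda)$, so its full-size subsets are unbounded in $\lambda$, which Proposition \ref{Main tool} tacitly uses.
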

Using Shelah's representation theorem we can prove something slightly more accurate:
\begin{corollary}
    Suppose that $\lambda$ is a singular limit cardinal of uncountable cofinality such that $\cf(\lambda)\notin Sp_T(\mathbb{P})$ and for some club $C\subseteq \lambda$, $C^{(+)}:=\{c^+\mid c\in C\}\subseteq Sp_T(\mathbb{P})$. Then $\lambda^+\in Sp_T(\mathbb{P})$. 
\end{corollary}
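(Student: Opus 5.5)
We apply Proposition \ref{Main tool} to a suitable set of successor cardinals, using Shelah's representation theorem to supply the scale. Let $\kappa=\cf(\lambda)$, which is uncountable and regular. Recall that the representation theorem states the following. For a singular $\lambda$ of uncountable cofinality and any club $C\subseteq\lambda$, we have
$$tcf\Big(\prod_{c\in C}c^+\big/\mathcal{J}^{C}_{bd}\Big)=\lambda^+ .$$
More precisely, this holds for any club $C$ of order type $\kappa$, and the bounded ideal is taken in the order type $\kappa$. First, shrink the given club to a club $D\subseteq C$ of order type $\kappa$, with $\min(D)>\kappa$. This is possible since $\cf(\lambda)=\kappa$: intersect $C$ with a club of order type $\kappa$ and drop an initial segment. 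Then $D^{(+)}\subseteq C^{(+)}\subseteq Sp_T(\mathbb{P})$.

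Set $B=D^{(+)}=\{c^+\mid c\in D\}$. We check the hypotheses of Proposition \ref{Main tool} for this $B$.
\begin{enumerate}
\item [(i)] $B$ is unbounded in $\lambda$, because $D$ is. Also $\lambda\in acc(Sp_T(\mathbb{P}))$, since $B\subseteq Sp_T(\mathbb{P})$ is unbounded in $\lambda$.
\item [(ii)] $|B|=|D|=\kappa$, and $\kappa=\cf(\kappa)\notin Sp_T(\mathbb{P})$ by assumption.
\item [(iii)] The map $c\mapsto c^+$ is an order isomorphism from $D$ onto $B$. A subset of $B$ is bounded in $\lambda$ exactly when its preimage is bounded in $D$. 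Hence $\mathcal{J}^\lambda_{bd}\restriction B$ corresponds to $\mathcal{J}^D_{bd}$, and $\prod B/\mathcal{J}^\lambda_{bd}$ is isomorphic to $\prod_{c\in D}c^+/\mathcal{J}^D_{bd}$. Its true cofinality is therefore $\lambda^+$ by the representation theorem applied to the club $D$.
\end{enumerate}
Proposition \ref{Main tool} then yields $\lambda^+\in Sp_T(\mathbb{P})$.

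The main obstacle is stating the representation theorem in exactly the form used in (iii). The standard formulation, as in Abraham--Magidor, is for a club $C\subseteq\lambda$ of order type $\cf(\lambda)$, with the scale taken modulo the bounded ideal on $C$. If the reference only gives the theorem for some club, we would use that any club $D'$ witnessing it can be intersected with our $D$. True cofinality modulo the bounded ideal is preserved when passing to an unbounded subset $B'\subseteq B$. This is because the restriction map $\prod B\to\prod B'$ is cofinal and order preserving modulo bounded sets, and any $g\in\prod B'$ extends to $\prod B$ by setting it to $0$ off $B'$. Hence we may always work with $C\cap D'$, which is still a club contained in $C$. The remaining steps are routine bookkeeping.
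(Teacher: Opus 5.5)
Your argument is correct and is essentially the paper's proof: shrink $C$ to order type $\cf(\lambda)$, use Shelah's representation theorem to obtain a club $C_0\subseteq C$ with $tcf(\prod C_0^{(+)}/\mathcal{J}^\lambda_{bd})=\lambda^+$, and apply Proposition \ref{Main tool} with $|B|=\cf(\lambda)\notin Sp_T(\mathbb{P})$. One caution: the representation theorem only provides \emph{some} such club, and true cofinality $\lambda^+$ need not hold for an arbitrary club, so it is your fallback in the last paragraph (intersect with a witnessing club and pass to the unbounded subset) that is the correct form, and this is exactly the paper's ``there is a club $C_0\subseteq C$''.
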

\begin{proof}
    Let  $cf(\lambda)=\theta<\lambda$. Fix any  $C\subseteq Sp_T(\mathbb{P})$ and suppose without loss of generality that $C$ has order type $\theta$. By Shelah's representation theorem (see  \cite[Thm. 2.23]{AbrahamMagidor2010}), there is a club $C_0\subseteq C$ such that $tcf(\prod C_0^{(+)}/\mathcal{J}^\lambda_{bd})=\lambda^+$. Since $C_0^{(+)}\subseteq Sp_T(\mathbb{P})$, we may apply Proposition \ref{Main tool} to conclude that $\lambda^+\in Sp_T(\mathbb{P})$.
\end{proof}

Moving to the case of regular cardinals, Proposition \ref{Main tool} yield the following:
\begin{corollary}
    If $\sup(Sp_T(\mathbb{P}))$ an is (weakly) inaccessible then for any unbounded set $B\subseteq Sp_T(\mathbb{P})$, $\mathfrak{d}(\prod B/\mathcal{J}_{bd})>\mathfrak{b}(\prod B/\mathcal{J}_{bd})$. In particular $2^{\sup(Sp_T(\mathbb{P}))}>(\sup(Sp_T(\mathbb{P})))^+$. 
\end{corollary}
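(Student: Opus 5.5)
Write $\lambda=\sup(Sp_T(\mathbb{P}))$, assumed weakly inaccessible, and fix an unbounded $B\subseteq Sp_T(\mathbb{P})\cap\lambda$. The plan is to argue by contradiction: suppose $\mathfrak{b}(\prod B/\mathcal{J}_{bd})=\mathfrak{d}(\prod B/\mathcal{J}_{bd})$. By the Remark following the definition of true cofinality, this says exactly that $tcf(\prod B/\mathcal{J}^\lambda_{bd})=\mu$ exists, with $\mu$ this common value. We then aim to apply Proposition \ref{Main tool}.

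To check the hypotheses of Proposition \ref{Main tool}, first shrink $B$ so that $|B|=\lambda$. Since $\lambda$ is regular and $B$ is unbounded in $\lambda$, we have $|B|\geq\cf(\lambda)=\lambda$, and $B\subseteq\lambda$ gives $|B|\le\lambda$; so no shrinking is needed and $|B|=\lambda=\cf(\lambda)$.

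The remaining hypothesis is $\lambda\notin Sp_T(\mathbb{P})$. This is the only point where care is needed. If $\lambda\in Sp_T(\mathbb{P})$, then $\lambda=\max Sp_T(\mathbb{P})$, and the proposition cannot be applied directly. I would treat this as a case to be excluded, or note that the intended reading is that the supremum is not attained, since otherwise the statement may fail. The natural formulation is for $\lambda$ a limit of the spectrum that is not itself in it. So I assume $\lambda\notin Sp_T(\mathbb{P})$.

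Under this assumption, Proposition \ref{Main tool} yields $\mu\in Sp_T(\mathbb{P})$. Now bound $\mu$ from below. Since $B$ is unbounded in the regular cardinal $\lambda$, any family of fewer than $\lambda$ functions in $\prod B$ is $\le_{\mathcal{J}_{bd}}$-bounded: given $\langle g_i\mid i<\kappa\rangle$ with $\kappa<\lambda$, define $g(b)=\sup_i g_i(b)+1$ for every $b\in B$ with $b>\kappa$. Hence $\mathfrak{b}\ge\lambda$, so $\mu\ge\lambda$. Since $\mu$ is regular, $\mu\in Sp_T(\mathbb{P})$, and $\lambda\notin Sp_T(\mathbb{P})$, we get $\mu>\lambda=\sup Sp_T(\mathbb{P})$, a contradiction. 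Therefore $\mathfrak{d}>\mathfrak{b}$.

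For the "in particular" clause, apply the above to $B=$ all regular cardinals of $Sp_T(\mathbb{P})$. The lower bound gives $\mathfrak{b}\ge\lambda$. The cofinal-family bound $\mathfrak{d}\le|\prod B|\le\lambda^\lambda=2^\lambda$ is immediate. Moreover $\mathfrak{b}$ is regular and $\mathfrak{b}\le\mathfrak{d}$, and $\mathfrak{b}=\lambda$ is impossible by a diagonal argument against $\lambda$-sequences: given $\langle g_i\mid i<\lambda\rangle$, define $g(b)=\sup_{i<b}g_i(b)+1$, which bounds every $g_i$ modulo bounded sets. So $\mathfrak{b}\ge\lambda^+$, and hence $2^\lambda\ge\mathfrak{d}>\mathfrak{b}\ge\lambda^+$. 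The main obstacle I expect is the case distinction concerning whether $\lambda$ itself lies in the spectrum.
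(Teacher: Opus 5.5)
Your argument for the first clause is the intended one. Assuming $\mathfrak{b}=\mathfrak{d}$ yields a scale of length $\mu$. Proposition \ref{Main tool} with $|B|=\lambda$ then puts $\mu$ into $Sp_T(\mathbb{P})$, and your elementary bound $\mu\ge\lambda$ together with $\lambda\notin Sp_T(\mathbb{P})$ gives the contradiction.

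You are also right that $\lambda\notin Sp_T(\mathbb{P})$ must be assumed; the paper uses it silently through clause (ii) of Proposition \ref{Main tool}. Without it the corollary is false. Under GCH, let $\lambda$ be inaccessible, $B$ the successor cardinals below $\lambda$, and $\mathbb{P}=\{f\in\prod B\mid \mathrm{supp}(f)\text{ bounded in }\lambda\}$ ordered pointwise. Then $|\mathbb{P}|=\lambda$ and $B\cup\{\lambda\}\subseteq Sp_T(\mathbb{P})$, so $\sup(Sp_T(\mathbb{P}))=\lambda$. Yet $\mathfrak{b}=\mathfrak{d}=\lambda^+=2^\lambda$.

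The gap is in the ``in particular'' clause. Your diagonal function $g(b)=\sup_{i<b}g_i(b)+1$ need not belong to $\prod B$. It is a supremum of $b$ many ordinals below the regular cardinal $b$, and that supremum can be $b$ itself, e.g. when $g_i(b)=i$ for $i<b$.

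Nor is the fact you are asserting true in general. Suppose $B$ is stationary in $\lambda$, say $\lambda$ weakly Mahlo and $B$ all regular cardinals below $\lambda$. Let $g_i(b)=i$ for $i<b$ and $g_i(b)=0$ otherwise. If $g$ dominates each $g_i$ above some $\beta_i$, then at any $b\in B$ with $\beta_i<b$ for all $i<b$ you get $g(b)\ge b$. Such $b$ exist by stationarity, so $\mathfrak{b}=\lambda$ is possible. In that case $\mathfrak{d}>\mathfrak{b}$ only gives $2^\lambda\ge\lambda^+$.

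The repair is to apply the first clause to a nonstationary $B$. You can get one in either of two ways:
\begin{enumerate}
\item thin $Sp_T(\mathbb{P})\cap\lambda$ to an unbounded subset containing none of its own limit points;
\item observe via Theorem \ref{thm more isbell}(2) that $Sp_T(\mathbb{P})\cap\lambda$ is already nonstationary, since $\lambda\notin Sp_T(\mathbb{P})$.
\end{enumerate}
Given a club $C$ disjoint from $B$, set $\gamma_b=\sup(C\cap b)<b$ and $g(b)=\sup_{i\le\gamma_b}g_i(b)+1<b$. This dominates every $g_i$ on a tail, so $\mathfrak{b}\ge\lambda^+$, and $2^\lambda\ge\mathfrak{d}>\mathfrak{b}\ge\lambda^+$ follows.
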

\begin{remark}
    {}\ Suppose that $\lambda$ is strongly inaccessible and $B\subseteq Sp_T(\mathbb{P})$. Then: \begin{enumerate}
        \item It is possible that $\mathfrak{b}(\prod B/J)<\mathfrak{d}(\prod B/J)$, for example upon adding to a model of $\mathsf{GCH}$ $\mu$-many Cohen functions to $\lambda$, $\{f_i:B\to\lambda\mid i<\mu\}$ for some $\mu\geq cf(\mu)>\lambda$, define $f^*_i(b)=\begin{cases}
            0 & f_i(b)\geq b\\
            f_i(b) & \text{otherwise}
        \end{cases}$.  Then a standard chain condition argument shows that the set $\{f^*_i\mid i<\mu\}$ cannot be dominated by fewer than $cf(\mu)$-many functions. Hence in the extension $\mathfrak{d}(\prod B/\mathcal{J}_{bd})=cf(\mu)$. On the other hand, the usual argument showing that $\mathfrak{b}_\lambda=\lambda^+$ after Cohen forcing can be slightly modified to show $\mathfrak{b}(\prod B/\mathcal{J}^\lambda_{bd})=\lambda^+$.
        \item It is also possible that $\mathfrak{b}(\prod B/\mathcal{J}^\lambda_{bd})=\mathfrak{d}(\prod B/\mathcal{J}^\lambda_{bd})<2^\lambda$, for that we start with a model of GCH, and iterate with $<\lambda$-support, $\aleph_{\lambda^+}$-many times, the suitable Hechler forcing: conditions are of the form $(\varphi,f)$, where $\varphi\in \prod B\cap\mu$ for some $\mu<\lambda$ and $f\in\prod B\setminus \mu$. $\varphi$ is an initial segment of a generic function $\varphi_G\in\prod B$ which $\leq_{\mathcal{J}^\lambda_{bd}}$-dominates every ground model function $g\in \prod B$. The order is defined by $(\varphi,f)\leq (\varphi',f')$ if $\varphi'\subseteq \varphi$ (i.e. end-extension) and for every $b\in B\setminus \mu$, $f'(b)\leq f(b)$. See \cite[Definition 2.3]{GartiShelah} 
    \end{enumerate}
        
\end{remark} 

\section{Two generalizations}
\begin{definition}
    Let $\mathbb{P}$ be a directed set and $A\subseteq \mathbb{P}$. Define $$J_{bd}(A,\mathbb{P})=\{X\subseteq A\mid X\text{ is bounded in }\mathbb{P}\}.$$
    Let $J_{bd}(\mathbb{P})=J_{bd}(\mathbb{P},\mathbb{P})$.
\end{definition}
\begin{proposition}
    {}\ 
    \begin{enumerate}
        \item $J_{bd}(A,\mathbb{P})$ is an ideal on $A$.
        \item $J_{bd}(A,\mathbb{P})$ is proper if and only if $A$ is unbounded in $\mathbb{P}$.
        \item $(J_{bd}(A,\mathbb{P}),\subseteq)\leq_T \mathbb{P}$.
        \item $(J_{bd}(\mathbb{P}),\subseteq)\equiv_T\mathbb{P}$.
    \end{enumerate}
\end{proposition}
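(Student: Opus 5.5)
We check the four items in order; each is a direct unwinding of the definitions, with the Tukey maps defined explicitly.

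\emph{Item (1).} $J_{bd}(A,\mathbb{P})$ is closed under subsets: any bound for $X$ also bounds every $Y\subseteq X$. It is closed under finite unions: if $X$ is bounded by $p$ and $Y$ by $q$, directedness gives $r\geq p,q$, and $r$ bounds $X\cup Y$. It also contains $\emptyset$.

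\emph{Item (2).} The ideal is proper exactly when $A\notin J_{bd}(A,\mathbb{P})$, that is, exactly when $A$ is unbounded in $\mathbb{P}$.

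\emph{Item (3).} We exhibit a cofinal map $f:\mathbb{P}\to J_{bd}(A,\mathbb{P})$ given by $f(p)=\{a\in A\mid a\leq_P p\}$. This set is bounded by $p$, so $f$ lands in the ideal. Let $\mathcal{B}$ be $\mathbb{P}$-cofinal and let $X\in J_{bd}(A,\mathbb{P})$ be bounded by some $p$. Choose $b\in\mathcal{B}$ with $b\geq p$. Then $X\subseteq f(b)$, so $f[\mathcal{B}]$ is cofinal. The map is also monotone, which is convenient but not needed.

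\emph{Item (4).} Item (3) with $A=P$ already gives $(J_{bd}(\mathbb{P}),\subseteq)\leq_T\mathbb{P}$. For the converse we need a cofinal map $g:J_{bd}(\mathbb{P})\to\mathbb{P}$. Using choice, let $g(X)$ be some upper bound of $X$. Let $\mathcal{C}$ be cofinal in $J_{bd}(\mathbb{P})$ and $p\in P$. The singleton $\{p\}$ is bounded, so there is $X\in\mathcal{C}$ with $p\in X$, and then $p\leq g(X)$. Hence $g[\mathcal{C}]$ is cofinal.

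Equivalently, one can use the unbounded-map formulation: a set $\mathcal{A}$ that is unbounded in $J_{bd}(\mathbb{P})$ has unbounded union. Any $p$ bounding $g[\mathcal{A}]$ would bound $\bigcup\mathcal{A}$ by transitivity, and then $\bigcup\mathcal{A}$ would be a bound for $\mathcal{A}$ in $J_{bd}(\mathbb{P})$.

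The only point needing care is using the right direction of the definition of $\leq_T$ in each item. Item (4) is the one step that needs an actual choice of bounds rather than a canonical map.
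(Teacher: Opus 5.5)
Your proof is correct. For (3) you work on the other side of the Tukey duality from the paper. The paper uses Isbell's argument: it takes $F:J_{bd}(A,\mathbb{P})\to\mathbb{P}$ sending each $X$ to a chosen bound, and shows $F$ is unbounded, using that an unbounded family $\mathcal{A}$ has $\bigcup\mathcal{A}\notin J_{bd}(A,\mathbb{P})$. You instead give the canonical cofinal map $p\mapsto\{a\in A\mid a\leq_P p\}$, which is monotone and needs no choices. For (4) you and the paper do the same thing, namely show that the bound-choosing map is cofinal. The paper tests against $\{q\mid q\leq_P p\}$, which is just your $f(p)$ for $A=P$, whereas your singleton $\{p\}$ already suffices. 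What the paper's route buys is economy: one map $F$ is shown to be both unbounded and cofinal, so it witnesses both reductions at once.

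One correction: your ``Equivalently'' paragraph in (4) does not prove the converse. Showing that $g$ sends unbounded subsets of $J_{bd}(\mathbb{P})$ to unbounded subsets of $\mathbb{P}$ witnesses $J_{bd}(\mathbb{P})\leq_T\mathbb{P}$. That is the direction you already had from (3), and it is exactly the paper's proof of (3). An unbounded-map proof of $\mathbb{P}\leq_T J_{bd}(\mathbb{P})$ needs a map $\mathbb{P}\to J_{bd}(\mathbb{P})$, e.g.\ $p\mapsto\{p\}$, or your $f$ itself since $p\in f(p)$. Indeed, if $\mathcal{A}\subseteq P$ is unbounded, any $Y\in J_{bd}(\mathbb{P})$ containing all the images would contain $\mathcal{A}$ and hence bound it. Your cofinal-map argument already settles the converse, so this is a mislabelled remark rather than a gap.
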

\begin{proof}
    $(1)$ easily follows from the directedness of $\mathbb{P}$. Also $(2)$ follows directly from the definition.  For $(3)$, this is Isbell's argument, let $F:J_{bd}(A,\mathbb{P})\to \mathbb{P}$ be defined by $F(X)=p$ where $p$ is some bound for $X$. We claim that $F$ is unbounded. If $\mathcal{A}\subseteq J_{bd}(A,\mathbb{P})$ is unbounded, then $\bigcup \mathcal{A}\notin J_{bd}(A,\mathbb{P})$. If $F[\mathcal{A}]$ was bounded by some $p$, then $p$ would also be a bound for $\bigcup\mathcal{A}$, contradiction.  
    To see $(4)$ we will prove that for $A=\mathbb{P}$, $F$ is also cofinal. Let $\mathcal{B}\subseteq J_{bd}(\mathbb{P})$ be cofinal and let $p\in\mathbb{P}$. Consider $E=\{q\in\mathbb{P}\mid q\leq p\}\in J_{bd}(\mathbb{P})$. Since $\mathcal{B}$ is cofinal, there is $B\in \mathcal{B}$ such that $E\subseteq B$. Hence $p\in B$. Since $F(B)$ is above all $b\in B$, $p\leq F(B)$, as wanted.
\end{proof}
From $(3)$ we get:
\begin{corollary}
    $Sp_T(J_{bd}(A,\mathbb{P}))\subseteq Sp_T(\mathbb{P})$
\end{corollary}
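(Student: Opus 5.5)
The plan is to get the corollary from item $(3)$ of the preceding proposition, $(J_{bd}(A,\mathbb{P}),\subseteq)\leq_T\mathbb{P}$, together with transitivity of the Tukey order. The claim $Sp_T(J_{bd}(A,\mathbb{P}))\subseteq Sp_T(\mathbb{P})$ is then just the observation that the Tukey spectrum is monotone with respect to $\leq_T$.

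Concretely, let $\lambda\in Sp_T(J_{bd}(A,\mathbb{P}))$, so $\lambda$ is regular and $(\lambda,<)\leq_T (J_{bd}(A,\mathbb{P}),\subseteq)$. I would work with the unbounded-map formulation used throughout the paper, since that is how Tukey reductions are handled in the proofs of Theorem \ref{Simple case} and Proposition \ref{Main tool}. So fix an unbounded map $g:\lambda\to J_{bd}(A,\mathbb{P})$, meaning it sends unbounded subsets of $\lambda$ to unbounded subsets. By $(3)$ there is also an unbounded map $F:J_{bd}(A,\mathbb{P})\to\mathbb{P}$; concretely, $F(X)$ is a chosen bound of $X$.

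The composition $F\circ g:\lambda\to\mathbb{P}$ is again unbounded. Indeed, if $S\subseteq\lambda$ is unbounded, then $g[S]$ is unbounded in $J_{bd}(A,\mathbb{P})$, and hence $F[g[S]]$ is unbounded in $\mathbb{P}$. Therefore $\lambda\leq_T\mathbb{P}$, and since $\lambda$ is regular, $\lambda\in Sp_T(\mathbb{P})$.

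The only point that needs any care is the equivalence between the cofinal-map definition of $\leq_T$ given in the introduction and the unbounded-map formulation (the Schmidt/Tukey duality). That equivalence is standard and is already used implicitly in the paper, so I expect no real obstacle. If one insists on the cofinal-map version, the argument is even more direct, because a composition of cofinal maps is cofinal.
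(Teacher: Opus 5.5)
Your proposal is correct and takes the same route as the paper, which derives the corollary directly from item $(3)$ of the preceding proposition, $(J_{bd}(A,\mathbb{P}),\subseteq)\leq_T\mathbb{P}$, together with transitivity of $\leq_T$. Your composition of the unbounded maps $g:\lambda\to J_{bd}(A,\mathbb{P})$ and $F:J_{bd}(A,\mathbb{P})\to\mathbb{P}$ is just the explicit verification of that transitivity.
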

We can also say something about the uniformity of these ideals. Given a collection of sets $\mathcal{M}$ we let $\delta_{\mathcal{M}}$ be the minimal cardinality of a set $X\in \mathcal{M}$. In particular $[X]^{<\delta_\mathcal{M}}\cap \mathcal{M}=\emptyset$ and and $[X]^{\leq \delta_\mathcal{M}}\cap \mathcal{M}\neq\emptyset$. In particular, for an ideal $I$ over $Y$ we obtain the cardinals  $\delta_{I^+}$ and $\delta_{I^*}$, which satisfy $\delta_{I^+}\leq \delta_{I^*}\leq |Y|$. 
Note that if $A\in I^+$ is such that $|A|=\delta_{I^+}$, then $\delta_{I[A]^+}=\delta_{I[A]^*}$. It is not hard to see that $\delta_{J_{bd}(\mathbb{P})^+}$ is exactly the directness of $\mathbb{P}$.

\begin{proposition}
    Let $\theta<\chi(\mathbb{P})$ be any cardinal. Suppose that $\l A_\alpha\mid \alpha<\theta\r$ is a $\theta$-sequence of subsets of $\mathbb{P}$ of size $<\chi(\mathbb{P})$ such that $\bigcup_{\alpha<\theta}A_\alpha$ is cofinal in $\mathbb{P}$. Then $\sup\{\delta_{(J_{bd}(A_\alpha,\mathbb{P}))^*}\mid \alpha<\theta\}=\chi(\mathbb{P})$. 
\end{proposition}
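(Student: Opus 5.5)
The plan is to prove the two inequalities separately. Throughout, $\delta_{(J_{bd}(A_\alpha,\mathbb{P}))^*}$ is the least cardinality of a set $X\subseteq A_\alpha$ that lies in the dual filter. Concretely, $X$ must be such that $A_\alpha\setminus X$ is bounded in $\mathbb{P}$. The upper bound is immediate: $A_\alpha$ itself belongs to the dual filter, since $A_\alpha\setminus A_\alpha=\emptyset$ is bounded. Hence $\delta_{(J_{bd}(A_\alpha,\mathbb{P}))^*}\leq |A_\alpha|<\chi(\mathbb{P})$ for every $\alpha$, and the supremum is at most $\chi(\mathbb{P})$. If some $A_\alpha$ happens to be bounded, the ideal is improper and the corresponding $\delta$ is $0$; this does no harm below.

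For the lower bound I would argue by contradiction. Suppose $\kappa:=\sup_{\alpha<\theta}\delta_{(J_{bd}(A_\alpha,\mathbb{P}))^*}<\chi(\mathbb{P})$. For each $\alpha<\theta$ choose $X_\alpha\subseteq A_\alpha$ with $|X_\alpha|\leq\kappa$ such that $A_\alpha\setminus X_\alpha$ is bounded, and fix a bound $p_\alpha\in\mathbb{P}$ for $A_\alpha\setminus X_\alpha$. Set
$$\mathcal{B}=\bigcup_{\alpha<\theta}X_\alpha\cup\{p_\alpha\mid\alpha<\theta\}.$$
Then $|\mathcal{B}|\leq\theta\cdot\kappa+\theta$.

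Next I would check that $\mathcal{B}$ is cofinal. Let $p\in\mathbb{P}$. Since $\bigcup_{\alpha<\theta}A_\alpha$ is cofinal, there are $\alpha<\theta$ and $a\in A_\alpha$ with $p\leq_P a$. If $a\in X_\alpha$, then $a\in\mathcal{B}$ already lies above $p$. Otherwise $a\in A_\alpha\setminus X_\alpha$, so $p\leq_P a\leq_P p_\alpha\in\mathcal{B}$. Thus $\chi(\mathbb{P})\leq|\mathcal{B}|$.

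The only point needing care is the cardinal arithmetic showing $|\mathcal{B}|<\chi(\mathbb{P})$. Both $\theta$ and $\kappa$ are below $\chi(\mathbb{P})$. If $\chi(\mathbb{P})$ is infinite, then $\theta\cdot\kappa+\theta\leq\max(\theta,\kappa,\aleph_0)$. This is below $\chi(\mathbb{P})$ when $\chi(\mathbb{P})>\aleph_0$; when $\chi(\mathbb{P})=\aleph_0$, both $\theta$ and $\kappa$ are finite and so is $|\mathcal{B}|$. If $\chi(\mathbb{P})$ is finite, it equals $1$ since $\mathbb{P}$ is directed, and then no cardinal $\theta<\chi(\mathbb{P})$ indexes a nonempty sequence with cofinal union, so the statement is vacuous. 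In all cases $|\mathcal{B}|<\chi(\mathbb{P})$, contradicting $\chi(\mathbb{P})\leq|\mathcal{B}|$. Hence the supremum equals $\chi(\mathbb{P})$. I expect no real obstacle; the heart of the argument is the observation that each ``exceptional'' bounded part $A_\alpha\setminus X_\alpha$ costs only one extra point $p_\alpha$.
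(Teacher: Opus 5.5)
Your proof is correct and follows the paper's argument: you bound each $\delta$ by $|A_\alpha|$, and then, assuming the supremum is some $\kappa<\chi(\mathbb{P})$, you build a cofinal set of size $<\chi(\mathbb{P})$ from the small filter sets $X_\alpha$ together with bounds $p_\alpha$ for the bounded remainders $A_\alpha\setminus X_\alpha$. Your set $\bigcup_{\alpha<\theta}X_\alpha\cup\{p_\alpha\mid\alpha<\theta\}$ is the intended one (the paper's displayed $\mathcal{C}$ has $A_\alpha\setminus X_\alpha$ in place of $X_\alpha$, evidently a typo), and your treatment of the finite and countable cases supplies cardinal arithmetic that the paper leaves implicit.
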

\begin{proof}
    By the assumption of the Proposition, $\delta_{(J_{bd}(A_\alpha,\mathbb{P}))^*}\leq |A_\alpha|< \chi(\mathbb{P})$ and therefore $\sup\{\delta_{J_{bd}(A_\alpha,\mathbb{P})}\mid \alpha<\theta\}\leq\chi(\mathbb{P})$. Suppose towards a contradiction that $\sup\{\delta_{(J_{bd}(A_\alpha,P))^*}\mid \alpha<\theta\}\leq \mu^*<\chi(\mathbb{P})$. For each $\alpha<\theta$ let $X_\alpha\subseteq A_\alpha$ such that $|X_\alpha|\leq \mu^*$ and $A_\alpha\setminus X_\alpha$ is bounded, say, by $p_\alpha\in \mathbb{P}$. Consider $\mathcal{C}=\bigcup_{\alpha<\theta}(A_\alpha\setminus X_\alpha)\cup\{p_\alpha\}$. Then Clearly, $|\mathcal{C}|\leq\theta\cdot\mu<\chi(\mathbb{P})$, by every element of $p\in \bigcup_{\alpha<\theta}A_\alpha$ is dominated by some member of $\mathcal{C}$. This is a contradiction to the minimality of $\chi(\mathbb{P})$.
\end{proof}
    The previous proposition is useful in the case where $\chi(\mathbb{P})$ is a singular cardinal of uncountable cofinality (e.g. if $\mathbb{P}=(\mathcal{U},\supseteq)$). Take any cofinal sequence $\l p_\alpha\mid \alpha<\chi(\mathbb{P})\r$ witnessing  $\chi(\mathbb{P})$, and suppose that $(\theta_i)_{i<cf(\chi(\mathbb{P}))}$ is increasing continuous and cofinal in $\chi(\mathbb{P})$. Then for every $i<cf(\chi(\mathbb{P}))$, we let $A_i=\{p_\alpha\mid \alpha<\theta_i\}$. By F\"{o}dor's theorem and the previous proposition, we must have that for club many $i$'s, $\delta_{(J_{bd}(A_i,\mathbb{P}))^*}=|A_i|$.

Next, let us isolate some generalizations of the results from the previous section in more general settings. Let $\{\mathbb{P}^p\mid p\in\mathbb{P}\}$ be directed sets and $\mathbb{Q}$ be a directed set such that for each $p\in\mathbb{P}$, $\mathbb{P}^p\leq_T \mathbb{Q}$ as witnessed by $\pi^p:\mathbb{P}^p\to \mathbb{Q}$. Also
Suppose also that $\mathbb{P}\not\leq_T \mathbb{Q}$ and consider $$(\prod_{p\in\mathbb{P}}\mathbb{P}^p/J_{bd}(\mathbb{P}),\leq_{J_{bd}(\mathbb{P})})$$
Define for each $\vec{a}\in \prod_{p\in\mathbb{P}}\mathbb{P}^p$ an element $F(\vec{a})\in \mathbb{Q}$ as follows:
Consider $f^{\vec{a}}:\mathbb{P}\to\mathbb{Q}$ defined by $f^{\vec{a}}(p)=\pi^p(\vec{a}(p))$. Since $\mathbb{P}\not\leq_T\mathbb{Q}$, $f^{\vec{a}}$ is not unbounded which means that there is an unbounded set $I_{\vec{a}}\in J_{bd}(\mathbb{P})^+$ such that $f^{\vec{a}}[I_{\vec{a}}]$ is bounded by some $F(\vec{a})\in\mathbb{Q}$. 
\begin{theorem}
    $F$ maps a cofinal subset of $(\prod_{p\in\mathbb{P}}\mathbb{P}^p/J_{bd}(\mathbb{P}),\leq_{J_{bd}(\mathbb{P})})$ to an unbounded subset of $\mathbb{Q}$.
\end{theorem}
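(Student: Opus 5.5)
The plan is to run the argument of Proposition \ref{Main tool} in this abstract setting. I will in fact show something slightly stronger than the statement: for \emph{every} $\leq_{J_{bd}(\mathbb{P})}$-cofinal family $\mathcal{C}\subseteq\prod_{p\in\mathbb{P}}\mathbb{P}^p$, the image $F[\mathcal{C}]$ is unbounded in $\mathbb{Q}$.

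Throughout I assume that no $\mathbb{P}^p$ has a maximal element. This is the usual non-triviality convention: a directed set with a maximum has trivial Tukey type and contributes nothing here. I am not sure this assumption can be dropped. It is used exactly once, in the choice of $b(p)$ below.

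\textbf{Setup.} Fix a cofinal $\mathcal{C}$ and suppose, towards a contradiction, that $F[\mathcal{C}]$ is bounded by some $q^*\in\mathbb{Q}$. Recall the definition of $F$. For each $\vec{a}\in\mathcal{C}$ we have a set $I_{\vec{a}}\in J_{bd}(\mathbb{P})^+$, that is, $I_{\vec{a}}$ is unbounded in $\mathbb{P}$. Moreover, for every $p\in I_{\vec{a}}$,
$$\pi^p(\vec{a}(p))=f^{\vec{a}}(p)\leq F(\vec{a})\leq q^*.$$

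\textbf{Key claim.} There is $p\in\mathbb{P}$ such that $D_p:=\{\vec{a}(p)\mid \vec{a}\in\mathcal{C},\ p\in I_{\vec{a}}\}$ is unbounded in $\mathbb{P}^p$. Once the claim holds, the theorem follows. Since $\pi^p$ is an unbounded map witnessing $\mathbb{P}^p\leq_T\mathbb{Q}$, the set $\pi^p[D_p]$ is unbounded in $\mathbb{Q}$. But every element of $\pi^p[D_p]$ is $\leq q^*$ by the displayed inequality, which is a contradiction.

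\textbf{Proof of the claim.} This is the main step, and it mirrors the inner claim of Proposition \ref{Main tool}. Suppose that every $D_p$ is bounded in $\mathbb{P}^p$, say by $c_p$.

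In the ordinal setting one defines a new function from the bounds and uses strictness. Here the orders are only preorders, so ``strictly above'' is not available. Instead, for each $p$ choose $b(p)\in\mathbb{P}^p$ with $b(p)\not\leq c_p$. Such an element exists because $c_p$ is not a maximum of $\mathbb{P}^p$.

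By cofinality of $\mathcal{C}$, there is $\vec{a}\in\mathcal{C}$ with $\vec{b}\leq_{J_{bd}(\mathbb{P})}\vec{a}$. That is, the set $E=\{p\mid b(p)\not\leq a(p)\}$ is bounded in $\mathbb{P}$. Since $I_{\vec{a}}$ is unbounded, $I_{\vec{a}}\not\subseteq E$, so we can pick $p\in I_{\vec{a}}\setminus E$. For this $p$ we get $b(p)\leq a(p)$. Also $a(p)\in D_p$ because $p\in I_{\vec{a}}$, so $a(p)\leq c_p$. Hence $b(p)\leq c_p$, contradicting the choice of $b(p)$.

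The only delicate point is this replacement of strict domination by ``not below the bound''. Everything else is a direct translation of the earlier PCF argument, with $B$ replaced by $\mathbb{P}$ and $\mathcal{J}^B_{bd}$ replaced by $J_{bd}(\mathbb{P})$.
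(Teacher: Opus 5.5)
Your proof is correct and is essentially the paper's argument: the same key claim that some $D_p$ must be unbounded in $\mathbb{P}^p$, refuted by $J_{bd}(\mathbb{P})$-dominating the vector of bounds with some $\vec{a}$ from the cofinal family and evaluating at a point of $I_{\vec{a}}$ outside the bounded exceptional set. The only difference is that the paper picks bounds $b_p\notin D_p$ and finishes via $b'_{p_0}\le b_{p_0}\le b'_{p_0}$ and antisymmetry, while your choice $b(p)\not\le c_p$ reaches the contradiction by transitivity alone, so it also covers preorders; it relies on the same no-maximal-element assumption that the paper invokes.
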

\begin{remark}
    Proposition \ref{Main tool} is a special case of the above theorem when $\mathbb{P}=cf(\lambda)$ and $\mathbb{P}_i=\lambda_i$ for some unbounded sequence $\lambda_i$ in $\lambda$. Indeed, in this case $J_{bd}(\mathbb{P})=\mathcal{J}^\lambda_{bd}$, and the requirament that the true cofinality exists simply means that $(\prod_{p\in\mathbb{P}}\mathbb{P}^p/J_{bd}(\mathbb{P}),\leq_{J_{bd}(\mathbb{P})})$ is a linear order which makes unbounded and cofinal sequence to be the same. Hence $F$ would be an unbounded map in this case. 
\end{remark}
\begin{proof}
    Suppose otherwise that $X\subseteq \prod_{p\in\mathbb{P}}\mathbb{P}^p$ is $\leq_{J_{bd}(\mathbb{P})}$-cofinal but $F[X]$ was bounded by $q^*$.
    We claim that there is $p\in\mathbb{P}$ such that $E_p:=\{\vec{a}(p)\mid p\in I_{\vec{a}}, \vec{a}\in X\}$ is unbounded in $\mathbb{P}^p$. This will yield a contradiction as for any  $\vec{a}(p)\in E_p$, the fact that $p\in I_{\vec{a}}$,  implies that $\pi^p(\vec{a}(p))\leq F(\vec{a})$. Also since $\vec{a}\in X$, $F(\vec{a})\leq q^*$, contradicting the fact that $\pi^p$ in an unbounded map. 

    So assume otherwise that for every $p\in\mathbb{P}$, $\{\vec{a}(p)\mid p\in I_{\vec{a}}, \vec{a}\in X\}$ is bounded in $\mathbb{P}^p$ by $b_p$, and we may assume that $b_p\notin \{\vec{a}(p)\mid p\in I_{\vec{a}}, \vec{a}\in X\}$ (for example if $\mathbb{P}^p$ does not have a maximal element) let $\vec{b}=\langle b_p\mid p\in\mathbb{P}\rangle$. Since $X$ is cofinal, there is $\vec{b}\leq_{J_{bd}}\vec{b}'\in X$. By the definition of $J_{bd}$, this means that there is $p^*\in\mathbb{P}$ such that whenever $p\not\leq p^*$, $b_p\leq b'_p$. Since $I_{\vec{b}'}$ is unbounded, there is $p_0\in I_{\vec{b}'}$ such that $p_0\not\leq p^*$ which means that $b_{p_0}\leq b'_{p_0}$. On the other hand, since $p_0\in I_{\vec{b}'}$ and $\vec{b}'\in X$, $b'_{p_0}\leq b_{p_0}$. Hence $b'_{p_0}=b_{p_0}$ contradicting the assumption that $b_{p_0}\notin \{\vec{a}(p_0)\mid p_0\in I_{\vec{a}}, \vec{a}\in X\}$.
\end{proof}
The following definition generalizes the notion of $(\lambda,\lambda)$-cohesive from \cite{Kanamori1978} and more generally, the equivalent condition for $\lambda\in Sp_T(\mathbb{P})$ (i.e. $\lambda\leq_T\mathbb{P}$).  
\begin{definition}
Let $\mathcal{I}$ be an ideal and let $\text{Space}(\mathcal{I})=\bigcup\mathcal{I}$. We say that $\mathbb{P}$ is $\mathcal{I}$-cohesive if there is a sequence $\l p_i\mid i\in \text{Space}(\mathcal{I})\r$ such that for every $S\in \mathcal{I}^+$ (i.e. $S\in P(\text{Space}(\mathcal{I}))\setminus \mathcal{I}$), $\{p_i\mid i\in S\}$ is unbounded in $\mathbb{P}$.
\end{definition}
Clearly, $\lambda\in Sp_T(\mathbb{P})$ if and only if $\mathbb{P}$ is $\mathcal{J}^\lambda_{bd}$-cohesive and more generally:
\begin{fact}
    Suppose that $cf(\mu)=\mu\leq\lambda$ and $\chi([\lambda]^{<\mu},\subseteq)=\lambda$. Then $[\lambda]^{<\mu}\leq_T \mathbb{P}$ if and only if $\mathbb{P}$ is $[\lambda]^{<\mu}$-cohesive.
\end{fact}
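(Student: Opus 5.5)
Both directions go through the notion of an unbounded map. We use the standard reformulation of the Tukey order: $\mathbb{Q}\leq_T\mathbb{P}$ iff there is a map $\psi:\mathbb{Q}\to\mathbb{P}$ sending unbounded subsets of $\mathbb{Q}$ to unbounded subsets of $\mathbb{P}$. This is the form used in the proofs of Theorem \ref{Simple case} and Proposition \ref{Main tool}. We read ``$[\lambda]^{<\mu}$-cohesive'' as cohesiveness for the ideal $\mathcal{I}$ on $\lambda$ generated by $[\lambda]^{<\mu}$, i.e. $\mathcal{I}=[\lambda]^{<\mu}$ itself, an ideal since $\mu$ is regular. Then $\mathcal{I}^+=\{S\subseteq\lambda\mid |S|\geq\mu\}$, and cohesiveness says there are $\langle p_i\mid i<\lambda\rangle$ in $\mathbb{P}$ such that every set of $\mu$ many of them is unbounded.

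($\Leftarrow$) Suppose $\langle p_i\mid i<\lambda\rangle$ witnesses cohesiveness. Define $\psi:[\lambda]^{<\mu}\to\mathbb{P}$ as follows. For each $x\in[\lambda]^{<\mu}$ with $\{p_i\mid i\in x\}$ bounded, let $\psi(x)$ be some bound; otherwise let $\psi(x)$ be arbitrary. Now let $\mathcal{A}\subseteq[\lambda]^{<\mu}$ be unbounded, so $|\bigcup\mathcal{A}|\geq\mu$, and suppose toward a contradiction that $\psi[\mathcal{A}]$ is bounded by $p$.
\begin{itemize}
\item If every $x\in\mathcal{A}$ has $\{p_i\mid i\in x\}$ bounded, then $p$ bounds $\{p_i\mid i\in\bigcup\mathcal{A}\}$, a set indexed by a set in $\mathcal{I}^+$. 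This contradicts cohesiveness.
\item The other case is when some $x\in\mathcal{A}$ has $\{p_i\mid i\in x\}$ unbounded. To avoid it, first shrink the domain: replace $\psi$ by a map defined via a cofinal family of ``good'' sets, or argue as follows. The sets $x$ with $\{p_i\}_{i\in x}$ bounded form a subfamily closed under subsets. Every $x\in[\lambda]^{<\mu}$ also yields the smaller bounded pieces. A cleaner route is to define $\psi(x)$ as a bound of $\{p_i\mid i\in x\}$ whenever one exists, and otherwise set $\psi(x)=\psi(y)$ for some smaller witness.
\end{itemize}

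I expect this second case to be the main technical nuisance. The fix I would try first is to use the hypothesis $\chi([\lambda]^{<\mu})=\lambda$. Fix a cofinal family $\{x_\xi\mid\xi<\lambda\}\subseteq[\lambda]^{<\mu}$, and instead index the witnesses by $\xi<\lambda$. This gives an unbounded map from $[\lambda]^{<\mu}$ into the set $\lambda$ ordered via these $x_\xi$, and we then compose.

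Alternatively, and more robustly, use the general equivalence quoted in the introduction. There it is stated that under $\chi([\lambda]^{<\mu})=\lambda$, $[\lambda]^{<\mu}\leq_T\mathbb{P}$ iff every ${<}\mu$-directed poset is Tukey below $\mathbb{P}$. Use it only to justify replacing $[\lambda]^{<\mu}$ by any Tukey-equivalent $\mu$-directed poset of size $\lambda$. Then define $\psi(x)$ only for sets $x$ on which $\{p_i\}_{i\in x}$ is bounded. Such sets are cofinal: given $x$ with $|x|<\mu$, any subset $y\subseteq x$ with $\{p_i\}_{i\in y}$ unbounded must have size $\geq\mu$ by \ldots{} — this is exactly where we need that small index sets give bounded families. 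That holds iff $\mathbb{P}$ is ${<}\mu$-directed on the $p_i$'s, which I would check or derive.

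($\Rightarrow$) Let $\psi:[\lambda]^{<\mu}\to\mathbb{P}$ be unbounded and set $p_i=\psi(\{i\})$. If $S\in\mathcal{I}^+$, then $\{\{i\}\mid i\in S\}$ has union of size $\geq\mu$, hence is unbounded in $([\lambda]^{<\mu},\subseteq)$. Therefore $\{p_i\mid i\in S\}$ is unbounded in $\mathbb{P}$. This direction is immediate, and the work lies in the converse as described.
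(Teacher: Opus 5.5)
Your ($\Rightarrow$) direction is correct. It is exactly the argument the paper gives right after this Fact: the Proposition that $\mathcal{I}\leq_T\mathbb{P}$ implies $\mathcal{I}$-cohesiveness, via $p_\alpha=f(\{\alpha\})$. The paper states the Fact itself without further proof.

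The ($\Leftarrow$) direction, however, is not established. You leave the ``Case 2'' obstruction open and offer routes that are either not carried out or would fail. The obstruction is real. Take $\lambda=\mu=\omega_1$, $\mathbb{P}=([\omega_1]^{<\omega},\subseteq)$ and $p_i=\{i\}$. This sequence is $[\omega_1]^{<\omega_1}$-cohesive, yet every infinite set of $p_i$'s is unbounded. So if $\psi$ is ``arbitrary'' on infinite $x$ (say constantly $\emptyset$), it sends the unbounded family $[\omega_1]^{\aleph_0}$ to a single point.

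The same example kills your ``more robust'' route. The index sets $x$ with $\{p_i\mid i\in x\}$ bounded form a downward closed family, so they are cofinal in $[\lambda]^{<\mu}$ only if they are all of $[\lambda]^{<\mu}$. That would mean every ${<}\mu$-sized subfamily of the $p_i$'s is bounded, and this cannot be derived from cohesiveness. More generally, any attempt to make $\psi(x)$ an upper bound of $\{p_i\mid i\in x\}$ is doomed.

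The missing idea is that $\psi(x)$ should not be tied to the elements of $x$ at all. Your half-sketched reindexing via a cofinal family is the right move, but you never say what has to be checked.

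\begin{itemize}
\item Fix a cofinal $\langle y_\xi\mid \xi<\lambda\rangle$ in $[\lambda]^{<\mu}$; this is where $\chi([\lambda]^{<\mu},\subseteq)=\lambda$ is used.
\item Let $\xi(x)=\min\{\xi\mid x\subseteq y_\xi\}$ and set $\psi(x)=p_{\xi(x)}$.
\item Let $\mathcal{A}\subseteq[\lambda]^{<\mu}$ be unbounded. Then $\Xi=\{\xi(x)\mid x\in\mathcal{A}\}$ has size $\geq\mu$. Otherwise $\bigcup\mathcal{A}\subseteq\bigcup_{\xi\in\Xi}y_\xi$, a union of fewer than $\mu$ sets each of size $<\mu$. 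By regularity of $\mu$ this union has size $<\mu$, so $\mathcal{A}$ would be bounded.
\item Hence $\Xi\in\mathcal{I}^+$, and $\psi[\mathcal{A}]=\{p_\xi\mid\xi\in\Xi\}$ is unbounded by cohesiveness.
\end{itemize}

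This uses no directedness of $\mathbb{P}$ beyond what is given, and it is exactly where both hypotheses of the Fact enter.
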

\begin{proposition}
    Let $\mathcal{I}$ be an ideal such that $\mathcal{I}\leq_T\mathbb{P}$. Then $\mathbb{P}$ is $\mathcal{I}$-cohesive. 
\end{proposition}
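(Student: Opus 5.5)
The plan is to copy Isbell's argument from part (3) of the previous proposition about $J_{bd}(A,\mathbb{P})$, now using a Tukey map out of $\mathcal{I}$. Here $\mathcal{I}$ is ordered by $\subseteq$, and $\mathcal{I}\leq_T\mathbb{P}$ gives an unbounded map $g:\mathcal{I}\to\mathbb{P}$: it sends every $\subseteq$-unbounded subfamily of $\mathcal{I}$ to an unbounded subset of $\mathbb{P}$. We may take $g$ from the unbounded-map formulation of the Tukey order already used in Theorem \ref{Simple case} and Proposition \ref{Main tool}. Write $X=\text{Space}(\mathcal{I})=\bigcup\mathcal{I}$. For each $i\in X$ the singleton $\{i\}$ lies in $\mathcal{I}$, since $i$ belongs to some member of $\mathcal{I}$ and ideals are closed downward. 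So we can define $p_i:=g(\{i\})$, and the goal is to show that $\l p_i\mid i\in X\r$ witnesses $\mathcal{I}$-cohesiveness.

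Fix $S\in\mathcal{I}^+$, so $S\subseteq X$ and $S\notin\mathcal{I}$. Consider the family $\mathcal{A}_S=\{\{i\}\mid i\in S\}\subseteq\mathcal{I}$. We claim $\mathcal{A}_S$ is unbounded in $(\mathcal{I},\subseteq)$. Suppose some $Y\in\mathcal{I}$ contained every $\{i\}$ with $i\in S$. Then $S\subseteq Y$, so $S\in\mathcal{I}$ by downward closure, which is a contradiction. Since $g$ is unbounded, $g[\mathcal{A}_S]=\{p_i\mid i\in S\}$ is unbounded in $\mathbb{P}$, which is exactly the requirement in the definition of $\mathcal{I}$-cohesive.

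The main point to check is that the Tukey reduction $\mathcal{I}\leq_T\mathbb{P}$ really provides an unbounded map $\mathcal{I}\to\mathbb{P}$ in this sense. The paper defines $\leq_T$ via cofinal maps $\mathbb{P}\to\mathcal{I}$, but the standard Tukey duality gives an unbounded map $\mathcal{I}\to\mathbb{P}$, and that is the form used in the earlier proofs of this paper. We also need $\mathcal{I}$ to be proper, i.e.\ $X\notin\mathcal{I}$, so that $(\mathcal{I},\subseteq)$ is directed without a maximum and $\mathcal{I}^+$ is nonempty. If $X\in\mathcal{I}$ then $\mathcal{I}^+=\emptyset$ and the statement holds vacuously. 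Apart from these checks, no further obstacle is expected; the argument is the dual of Proposition (3) and should take only a few lines.
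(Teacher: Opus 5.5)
Your proposal is correct and is essentially the paper's argument. The paper also takes an unbounded map $f:\mathcal{I}\to\mathbb{P}$, sets $p_\alpha=f(\{\alpha\})$, and notes that for $S\in\mathcal{I}^+$ the family $\{\{\alpha\}\mid\alpha\in S\}$ is $\subseteq$-unbounded in $\mathcal{I}$, so its image $\{p_\alpha\mid\alpha\in S\}$ is unbounded in $\mathbb{P}$; your extra checks (singletons lie in $\mathcal{I}$, the cofinal/unbounded-map duality, and the vacuous improper case) are details the paper leaves implicit.
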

\begin{proof}
    Let $f:\mathcal{I}\to \mathbb{P}$ be unbounded. For each $\alpha\in \text{Space}(\mathcal{I})$ let $f(\{\alpha\})=p_\alpha$. If $S\in\mathcal{I}^+$, then $\{\{\alpha\}\mid \alpha\in S\}\subseteq\mathcal{I}$ is unbounded, and therefore $f[\{\{\alpha\}\mid\alpha\in S\}]=\{p_\alpha\mid \alpha\in S\}$ is unbounded. Hence $\mathbb{P}$ is $\mathcal{I}$-cohesive.
\end{proof}
The following monotonicity facts are easily verifiable:
\begin{fact}
    Suppose that $\mathcal{I}\supseteq \mathcal{J}$. If $\mathbb{P}$ is $\mathcal{J}$-cohesive then $\mathbb{P}$ is $\mathcal{I}$-cohesive.
\end{fact}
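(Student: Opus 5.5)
The plan is to use the witnessing sequence for $\mathcal{J}$-cohesiveness directly as a witness for $\mathcal{I}$-cohesiveness. Concretely, suppose $\mathcal{J}\subseteq\mathcal{I}$ and fix a sequence $\l p_i\mid i\in \text{Space}(\mathcal{J})\r$ such that for every $S\in\mathcal{J}^+$, the set $\{p_i\mid i\in S\}$ is unbounded in $\mathbb{P}$. The key observation is that enlarging the ideal shrinks the family of positive sets: if $S\notin\mathcal{I}$ then, since $\mathcal{J}\subseteq\mathcal{I}$, also $S\notin\mathcal{J}$. Hence $\mathcal{I}^+\subseteq\mathcal{J}^+$ for subsets of a common space, and the unboundedness requirement for $\mathcal{I}$ becomes a special case of the one for $\mathcal{J}$.

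The only real point to address is the bookkeeping of the underlying spaces. Since $\mathcal{J}\subseteq\mathcal{I}$, we have $\text{Space}(\mathcal{J})=\bigcup\mathcal{J}\subseteq\bigcup\mathcal{I}=\text{Space}(\mathcal{I})$. So the sequence must be extended to indices $i\in\text{Space}(\mathcal{I})\setminus\text{Space}(\mathcal{J})$. I would do this by assigning each such $i$ an arbitrary value, say a fixed $p_0\in\mathbb{P}$. In the intended setting of the paper, where both ideals live on the same $\lambda$ and contain all singletons, the two spaces coincide and this step is vacuous.

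Now let $S\in\mathcal{I}^+$, that is, $S\subseteq\text{Space}(\mathcal{I})$ with $S\notin\mathcal{I}$. Put $S'=S\cap\text{Space}(\mathcal{J})$. I would show that $S'\in\mathcal{J}^+$. Otherwise $S'\in\mathcal{J}\subseteq\mathcal{I}$, and the leftover part $S\setminus S'$ would also have to be handled. In general, $S\setminus S'$ need not lie in $\mathcal{I}$, so this is the one place where the argument could fail. It is also the only place where I expect friction: if $\text{Space}(\mathcal{I})$ is genuinely larger, the fact as literally stated can fail. I would therefore state the fact for ideals on a common underlying set, which is the reading the paper intends (e.g. $\mathcal{J}^\lambda_{bd}\subseteq\mathcal{I}$ on $\lambda$).

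Under that reading $S'=S$, so $S\in\mathcal{J}^+$, and therefore $\{p_i\mid i\in S\}$ is unbounded in $\mathbb{P}$ by the choice of the sequence. This proves that $\mathbb{P}$ is $\mathcal{I}$-cohesive. As a consequence, together with the remark that $\lambda\in Sp_T(\mathbb{P})$ iff $\mathbb{P}$ is $\mathcal{J}^\lambda_{bd}$-cohesive, this yields property (1) from the introduction: every ideal on $\lambda$ extending $\mathcal{J}^\lambda_{bd}$ is a cohesive ideal for $\mathbb{P}$ once $\lambda\in Sp_T(\mathbb{P})$.
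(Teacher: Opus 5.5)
Your proof is correct and is exactly the argument the paper intends; the paper leaves this fact unproved as ``easily verifiable.'' On a common underlying space, $\mathcal{J}\subseteq\mathcal{I}$ gives $\mathcal{I}^+\subseteq\mathcal{J}^+$, so any sequence witnessing $\mathcal{J}$-cohesiveness also witnesses $\mathcal{I}$-cohesiveness. Your caveat about $\text{Space}(\mathcal{J})\subsetneq\text{Space}(\mathcal{I})$ is justified: $\mathbb{P}=(\omega,<)$ is $[\omega]^{<\omega}$-cohesive but not cohesive for the ideal on $\omega\sqcup\omega_1$ generated by $[\omega]^{<\omega}\cup[\omega_1]^{\leq\omega}$, since every sequence is constant on some uncountable $S\subseteq\omega_1$. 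So the same-space reading you adopt is the right one.
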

\begin{fact}
    If $\mathbb{P}\leq_T\mathbb{Q}$, then if $\mathbb{P}$ is $\mathcal{I}$-cohesive then also $\mathbb{Q}$ is $\mathcal{I}$-cohesive.
\end{fact}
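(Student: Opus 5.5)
The statement to prove is the last fact: if $\mathbb{P}\leq_T\mathbb{Q}$ and $\mathbb{P}$ is $\mathcal{I}$-cohesive, then $\mathbb{Q}$ is $\mathcal{I}$-cohesive. The plan is to transport the cohesive sequence along an unbounded map. By the convention used throughout the paper (e.g.\ in Theorem \ref{Simple case} and in the preceding proposition), $\mathbb{P}\leq_T\mathbb{Q}$ is witnessed by a map $g:\mathbb{P}\to\mathbb{Q}$ that sends unbounded subsets of $\mathbb{P}$ to unbounded subsets of $\mathbb{Q}$. Fix such a $g$, and fix a sequence $\l p_i\mid i\in \text{Space}(\mathcal{I})\r$ witnessing that $\mathbb{P}$ is $\mathcal{I}$-cohesive. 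Define $q_i:=g(p_i)$ for each $i\in\text{Space}(\mathcal{I})$; this $\l q_i\mid i\in\text{Space}(\mathcal{I})\r$ is the candidate witness for $\mathbb{Q}$.

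For the verification, let $S\in\mathcal{I}^+$. By the cohesiveness of $\mathbb{P}$, the set $\{p_i\mid i\in S\}$ is unbounded in $\mathbb{P}$. Since $g$ is unbounded, its image $g[\{p_i\mid i\in S\}]=\{q_i\mid i\in S\}$ is unbounded in $\mathbb{Q}$. As $S\in\mathcal{I}^+$ was arbitrary, $\mathbb{Q}$ is $\mathcal{I}$-cohesive.

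The only point needing care is the direction of the witnessing map. The introduction phrases $\mathbb{P}\leq_T\mathbb{Q}$ via a cofinal map $f:\mathbb{Q}\to\mathbb{P}$, while the argument above uses an unbounded map $g:\mathbb{P}\to\mathbb{Q}$. I would invoke the standard equivalence between the two, due to Tukey and Schmidt: for directed sets, a cofinal map $f:\mathbb{Q}\to\mathbb{P}$ exists if and only if an unbounded map $g:\mathbb{P}\to\mathbb{Q}$ exists. Given $f$, one obtains $g$ by choosing $g(p)$ to be some $q$ with $f[\{q'\mid q'\geq q\}]\subseteq\{p'\mid p'\geq p\}$. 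This equivalence is already used implicitly in the earlier proofs of the paper, so I expect no further obstacle.
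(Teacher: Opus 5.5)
Your argument is correct and is the intended one. The paper gives no proof, calling this fact ``easily verifiable''; pushing the cohesive sequence $\langle p_i\mid i\in\text{Space}(\mathcal{I})\rangle$ forward along an unbounded map $g:\mathbb{P}\to\mathbb{Q}$ mirrors the paper's proof of the preceding proposition, and your passage from a cofinal map $\mathbb{Q}\to\mathbb{P}$ to an unbounded map $\mathbb{P}\to\mathbb{Q}$ via convergence witnesses is the standard, valid equivalence.
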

Let us show that the character is also an upper bound for $\lambda$'s for which $\mathbb{P}$ is $\mathcal{I}$-cohesive:
\begin{proposition}
    Suppose that $\mathcal{I}$ is a $\chi(\mathbb{P})^+$-complete. Then $\mathbb{P}$ is not $\mathcal{I}$-cohesive. In particular if $\mathbb{P}$ is $\mathcal{I}$-cohesive for some $\lambda$-complete ideal over $\lambda$, then $\lambda\leq\chi(\mathbb{P})$.
\end{proposition}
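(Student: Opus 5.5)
Fix a cofinal family $\mathcal{B}\subseteq\mathbb{P}$ with $|\mathcal{B}|=\chi(\mathbb{P})$, and suppose towards a contradiction that $\langle p_i\mid i\in \text{Space}(\mathcal{I})\rangle$ witnesses that $\mathbb{P}$ is $\mathcal{I}$-cohesive. The plan is to cover the whole space by $\chi(\mathbb{P})$ many sets on each of which the sequence is bounded. For each $b\in\mathcal{B}$ let
$$S_b=\{i\in\text{Space}(\mathcal{I})\mid p_i\leq_{\mathbb{P}} b\}.$$
Each $i$ lies in some $S_b$, because $p_i$ is dominated by some member of the cofinal family $\mathcal{B}$. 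Hence $\text{Space}(\mathcal{I})=\bigcup_{b\in\mathcal{B}}S_b$.

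Every $\{p_i\mid i\in S_b\}$ is bounded by $b$. Cohesiveness says that every $\mathcal{I}$-positive set carries an unbounded subfamily, so each $S_b\in\mathcal{I}$. By $\chi(\mathbb{P})^+$-completeness, a union of $\chi(\mathbb{P})$ many members of $\mathcal{I}$ lies in $\mathcal{I}$, so $\text{Space}(\mathcal{I})\in\mathcal{I}$. This contradicts properness of $\mathcal{I}$, which I take as part of the convention (otherwise $\mathcal{I}^+=\emptyset$ and cohesiveness is vacuous).

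For the ``in particular'' clause, suppose $\mathcal{I}$ is a $\lambda$-complete ideal over $\lambda$ and $\mathbb{P}$ is $\mathcal{I}$-cohesive. If $\lambda>\chi(\mathbb{P})$, then $\lambda\geq\chi(\mathbb{P})^+$, so $\mathcal{I}$ is $\chi(\mathbb{P})^+$-complete, contradicting the first part. Hence $\lambda\leq\chi(\mathbb{P})$.

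The only delicate point is bookkeeping. I need to be sure that ``$\kappa$-complete'' means closure under unions of fewer than $\kappa$ sets, so that $\chi(\mathbb{P})^+$-completeness covers unions of exactly $\chi(\mathbb{P})$ many sets. I also need to use that the ideal is proper on its space. Otherwise the argument is direct.
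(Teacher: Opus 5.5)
Your proposal is correct and is essentially the paper's argument: the paper also dominates each $p_\alpha$ by some $b_\beta$ from a cofinal family of size $\chi(\mathbb{P})$ and uses $\chi(\mathbb{P})^+$-completeness to get a single $\beta$ whose set of indices is $\mathcal{I}$-positive. That is exactly the contrapositive of your covering argument, and it likewise tacitly requires $\mathcal{I}$ to be proper, a point you rightly make explicit. Your derivation of the ``in particular'' clause, which the paper leaves implicit, is also fine.
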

\begin{proof}
    Let $\l b_\alpha\mid \alpha<\chi(\mathbb{P})\r$ be a cofinal subset of $\mathbb{P}$ and let $\l p_\alpha\mid \alpha<\lambda\r$ be any sequence of conditions in $\mathbb{P}$. Then for each $\alpha<\lambda$ there is $\beta<\chi(\mathbb{P})$ such that $p_\alpha\leq b_\beta$. Since $\mathcal{I}$ is $\chi(\mathbb{P})^+$-complete, there is $\beta<\chi(\mathbb{P})$ and a positive set $S\in \mathcal{I}^+$ such that for every $\alpha\in S$, $p_\alpha\leq b_\beta$. Hence $\mathbb{P}$ is not $\mathcal{I}$-cohesive.
\end{proof}
Let us denote by $Sp^*_T(\mathbb{P})$ the set which consist of all regular cardinals $\lambda$ such that there is a $\lambda$-complete ideal $\mathcal{I}$ on $\lambda$ for which $\mathbb{P}$ is $\mathcal{I}$-cohesive. 
\begin{corollary}
    $Sp_T(\mathbb{P})\subseteq Sp^*_T(\mathbb{P})\subseteq \chi(\mathbb{P})$.
\end{corollary}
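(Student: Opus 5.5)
The corollary asserts two inclusions, $Sp_T(\mathbb{P})\subseteq Sp^*_T(\mathbb{P})$ and $Sp^*_T(\mathbb{P})\subseteq \chi(\mathbb{P})$; I will prove them separately, each from results established just above.

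\emph{First inclusion.} Let $\lambda\in Sp_T(\mathbb{P})$, so $\lambda$ is regular and $\lambda\leq_T\mathbb{P}$. By the remark following the definition of cohesiveness, $\mathbb{P}$ is $\mathcal{J}^\lambda_{bd}$-cohesive. Concretely, take $\l p_\alpha\mid\alpha<\lambda\r$ to be the images of ordinals under an unbounded map $\lambda\to\mathbb{P}$. A set $S\subseteq\lambda$ lies outside $\mathcal{J}^\lambda_{bd}$ exactly when it is unbounded in $\lambda$, so $S$ is unbounded in $(\lambda,<)$, and hence $\{p_\alpha\mid\alpha\in S\}$ is unbounded in $\mathbb{P}$. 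It remains to check that $\mathcal{J}^\lambda_{bd}$ is $\lambda$-complete: a union of fewer than $\lambda$ bounded subsets of $\lambda$ is bounded, by regularity of $\lambda$. Also $\text{Space}(\mathcal{J}^\lambda_{bd})=\lambda$. So $\mathcal{J}^\lambda_{bd}$ witnesses $\lambda\in Sp^*_T(\mathbb{P})$.

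\emph{Second inclusion.} Let $\lambda\in Sp^*_T(\mathbb{P})$, witnessed by a $\lambda$-complete ideal $\mathcal{I}$ on $\lambda$ for which $\mathbb{P}$ is $\mathcal{I}$-cohesive. By the last proposition, $\lambda\leq\chi(\mathbb{P})$. To conclude $\lambda\in\chi(\mathbb{P})$, i.e.\ $\lambda<\chi(\mathbb{P})$, I must exclude $\lambda=\chi(\mathbb{P})$, and this is the main obstacle. The proof of that proposition used only that $\mathcal{I}$ is $\chi(\mathbb{P})^+$-complete, to find one $b_\beta$ dominating $p_\alpha$ for all $\alpha$ in a positive set. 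When $\lambda=\chi(\mathbb{P})$, the ideal is merely $\chi(\mathbb{P})$-complete, so this pigeonhole step fails.

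\emph{The case $\lambda=\chi(\mathbb{P})$.} Here $\lambda$ is regular, and $\lambda\leq_T\mathbb{P}$ is actually possible: for example $\mathbb{P}=(\lambda,<)$ has character $\lambda$ and is $\mathcal{J}^\lambda_{bd}$-cohesive. So the strict inclusion $Sp^*_T(\mathbb{P})\subseteq\chi(\mathbb{P})$ would be false as literally stated, unless $\chi(\mathbb{P})$ is read as the set of ordinals $\leq\chi(\mathbb{P})$, i.e.\ $\chi(\mathbb{P})+1$. I therefore intend to prove the statement in the form $Sp^*_T(\mathbb{P})\subseteq\chi(\mathbb{P})+1$, meaning every member is at most $\chi(\mathbb{P})$, which is exactly what the proposition gives. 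In the write-up I will note that the intended reading is $\leq$, and point to the example $\mathbb{P}=(\lambda,<)$ to show the bound cannot be improved to a strict inequality.
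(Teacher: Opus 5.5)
Your proposal is correct and follows the paper's own (implicit) argument: the first inclusion comes from the observation that $\lambda\in Sp_T(\mathbb{P})$ iff $\mathbb{P}$ is $\mathcal{J}^\lambda_{bd}$-cohesive, together with the $\lambda$-completeness of $\mathcal{J}^\lambda_{bd}$ for regular $\lambda$, and the second inclusion is the ``in particular'' clause of the preceding proposition. Your reading of the second inclusion as ``every member is $\leq\chi(\mathbb{P})$'' is the intended one (the introduction states it exactly this way), and your example $\mathbb{P}=(\lambda,<)$ correctly shows that the literal strict reading fails whenever $\chi(\mathbb{P})$ is regular.
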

The next theorem shows that a reasonable strategy to address Question \ref{Queation1} would be to show that $\sup(Sp^*_T(\mathbb{P}))=\chi(\mathcal{U})$.

\begin{theorem}
    Let $\lambda$ be a regular cardinal, $\mathcal{U}$ be an ideal on $\lambda$ and $\mathcal{U}$ be an ultrafilter. Then the following are equivlanet.
    \begin{enumerate}
        \item $\mathcal{U}$ is $\mathcal{I}$-cohesive.
        \item there is a cover $X\in M_{\mathcal{U}}$ for $j_U''\lambda$ such that for any $S\in I_\lambda^+$, $j_U(S)\not\subseteq X$.
    \end{enumerate}
\end{theorem}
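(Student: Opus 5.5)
We prove the two directions separately, through the standard correspondence between sequences $\langle A_\alpha\mid \alpha<\lambda\rangle$ of sets in $\mathcal{U}$ and elements of $M_{\mathcal{U}}$. Read the statement with $\mathcal{I}$ an ideal on $\lambda$ and $\mathcal{U}$ an ultrafilter over a set $Y$. Boundedness in $(\mathcal{U},\supseteq)$ means: a family $\{A_\alpha\mid\alpha\in S\}$ is bounded iff $\bigcap_{\alpha\in S}A_\alpha\in\mathcal{U}$.

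\emph{From (1) to (2).} Let $\langle A_\alpha\mid\alpha<\lambda\rangle$ witness $\mathcal{I}$-cohesiveness. For $y\in Y$ put $X_y=\{\alpha<\lambda\mid y\in A_\alpha\}$ and $X=[y\mapsto X_y]_{\mathcal{U}}$.

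\begin{itemize}
\item $X$ covers $j_{\mathcal{U}}''\lambda$: by \L o\'s, $j_{\mathcal{U}}(\alpha)\in X$ iff $\{y\mid \alpha\in X_y\}=A_\alpha\in\mathcal{U}$, which holds.
\item $X$ is $\mathcal{I}^+$-thin: for $S\subseteq\lambda$, $j_{\mathcal{U}}(S)\subseteq X$ iff $\{y\mid S\subseteq X_y\}=\bigcap_{\alpha\in S}A_\alpha\in\mathcal{U}$. For $S\in\mathcal{I}^+$ this fails by cohesiveness.
\end{itemize}

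\emph{From (2) to (1).} Given $X=[g]_{\mathcal{U}}$, we may assume $g(y)\subseteq\lambda$ for all $y$, by intersecting with $j_{\mathcal{U}}(\lambda)$; this does not affect either property. Set $A_\alpha=\{y\mid \alpha\in g(y)\}$. The cover property gives $A_\alpha\in\mathcal{U}$. For $S\in\mathcal{I}^+$, thinness gives $\{y\mid S\subseteq g(y)\}=\bigcap_{\alpha\in S}A_\alpha\notin\mathcal{U}$.

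It remains to pass from this to unboundedness of $\{A_\alpha\mid\alpha\in S\}$. If $B\in\mathcal{U}$ satisfied $B\subseteq A_\alpha$ for all $\alpha\in S$, then $B\subseteq\bigcap_{\alpha\in S}A_\alpha$, so the intersection would lie in $\mathcal{U}$, a contradiction. Hence $\langle A_\alpha\mid\alpha<\lambda\rangle$ witnesses that $\mathcal{U}$ is $\mathcal{I}$-cohesive.

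The only delicate point is the \L o\'s computation for $j_{\mathcal{U}}(S)\subseteq X$ when $S$ is large, since $j_{\mathcal{U}}(S)$ need not equal $j_{\mathcal{U}}''S$. The calculation uses only that $j_{\mathcal{U}}(S)=[c_S]_{\mathcal{U}}$, so $M_{\mathcal{U}}\models [c_S]\subseteq[g]$ iff $\{y\mid S\subseteq g(y)\}\in\mathcal{U}$, and this needs no completeness of $\mathcal{U}$. I expect no genuine obstacle here beyond this bookkeeping and the normalization $g(y)\subseteq\lambda$. The argument mirrors the proof of the earlier characterization theorem from \cite{TomCohesive}, which is the special case $\mathcal{I}=\mathcal{J}^\lambda_{bd}$.
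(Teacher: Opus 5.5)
Your proposal is correct and is essentially the paper's proof: the same translation $X=[y\mapsto\{\alpha<\lambda\mid y\in A_\alpha\}]_{\mathcal{U}}$ and, conversely, $A_\alpha=\{y\mid \alpha\in g(y)\}$, with \L o\'{s} applied to $j_{\mathcal{U}}(S)\subseteq X$ through the identity $\{y\mid S\subseteq g(y)\}=\bigcap_{\alpha\in S}A_\alpha$. Your normalization $g(y)\subseteq\lambda$ is harmless but unnecessary. Your explicit check that $\bigcap_{\alpha\in S}A_\alpha\notin\mathcal{U}$ is equivalent to unboundedness in $(\mathcal{U},\supseteq)$ fills in a step the paper leaves implicit.
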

\begin{proof}
    Suppose that $\mathcal{U}$ is $\mathcal{I}$-cohesive and let $\l A_i\mid i<\lambda\r\subseteq\mathcal{U}$ witness this. For each $\alpha<\kappa$, let $f(\alpha)=\{i<\lambda\mid \alpha\in A_i\}$ and let $X=[f]_{\mathcal{U}}$. We claim that $X$ is as wanted. Indeed, for every $i<\lambda$, $\{\alpha<\lambda\mid i\in f(\alpha)\}=A_i\in\mathcal{U}$. By \L o\'{s} Theorem $M_{\mathcal{U}}\models j_{\mathcal{U}}(i)\in [f]_{\mathcal{U}}$. Suppose that $S\in \mathcal{I}^+$ and note that
    \begin{equation}\label{equaiton2}
        \bigcap_{i\in S}A_i=\{\alpha<\kappa\mid S\subseteq f(\alpha)\}
    \end{equation}
    Since $\{A_i\mid i\in S\}$ is unbounded, $\bigcap_{i\in S}A_i\notin \mathcal{U}$, hence by \L o\'{s} Theorem $M_{\mathcal{U}}\models j_{\mathcal{U}}(S)\not\subseteq X$. 
    
    In the other direction, suppose that $X\in M_{\mathcal{U}}$ is a a cover as in $(2)$. Let $f$ be a representative function $[f]_{\mathcal{U}}=X$. Set $A_i=\{\alpha\mid i\in f(\alpha)\}$. Since $M_{\mathcal{U}}\models i\in X$, $A_i\in\mathcal{U}$. Given any $S\in \mathcal{I}^+$, Equation \ref{equaiton2} still holds true, and again use $M_{\mathcal{U}}\models j_{\mathcal{U}}(X)\not\subseteq S$ to see that$\bigcap_{i\in S}A_i\notin \mathcal{U}$. Hence $\l A_i\mid i<\lambda\r$ witnesses that $\mathcal{U}$ is $\mathcal{I}$-cohesive.
\end{proof}
\begin{theorem}
    Suppose that $B\subseteq Sp_T(\mathbb{P})$ and:
    \begin{enumerate}
        \item $cf(|B|)=|B|$ and let $\mathcal{I}$ be an ideal on $|B|$ such that $\mathbb{P}$ is not $\mathcal{I}$-cohesive.
        \item $tcf(\prod B /\mathcal{I})=\mu$.
    \end{enumerate}
    Then $\mu\in Sp_T(\mathbb{P})$.
\end{theorem}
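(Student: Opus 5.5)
This follows the proof of Proposition \ref{Main tool}, with the bounded ideal replaced by $\mathcal{I}$ and the hypothesis $|B|\notin Sp_T(\mathbb{P})$ replaced by the failure of $\mathcal{I}$-cohesiveness. Let $\kappa=|B|$ and enumerate $B=\{\rho_i\mid i<\kappa\}$, so that $\mathcal{I}$ is read as an ideal on $B$. We assume, as is implicit in the statement, that $\mathcal{I}$ contains the bounded subsets of $B$ (more precisely, that every $\mathcal{I}$-positive set is unbounded in $\sup B$). This is needed so that the scale argument can use that positive sets meet every co-small set.

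\textbf{Setup.} For each $\rho\in B$ fix $\l p_{\alpha,\rho}\mid\alpha<\rho\r$ witnessing $\rho\in Sp_T(\mathbb{P})$: every $\rho$-sized subfamily is unbounded in $\mathbb{P}$. Fix a scale $\l f_\beta\mid\beta<\mu\r$ in $\prod B/\mathcal{I}$.

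\textbf{Choosing the $p^*_\beta$.} For each $\beta<\mu$, consider the sequence $\l p_{f_\beta(\rho_i),\rho_i}\mid i<\kappa\r$. Since $\mathbb{P}$ is not $\mathcal{I}$-cohesive, this sequence does not witness cohesiveness. So there is $I_\beta\in\mathcal{I}^+$ such that $\{p_{f_\beta(\rho),\rho}\mid\rho\in I_\beta\}$ is bounded by some $p^*_\beta$. We claim that $\l p^*_\beta\mid\beta<\mu\r$ witnesses $\mu\in Sp_T(\mathbb{P})$.

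\textbf{The contradiction.} Suppose instead that some $I\in[\mu]^\mu$ has $\{p^*_\beta\mid\beta\in I\}$ bounded by $p^*$. The key claim is that there is $\rho\in B$ such that $E_\rho=\{f_\beta(\rho)\mid\beta\in I,\ \rho\in I_\beta\}$ is unbounded in $\rho$. Granting the claim, $E_\rho$ has size $\rho$ because $\rho$ is regular. Then $\{p_{\gamma,\rho}\mid\gamma\in E_\rho\}$ is unbounded by the choice of the $p_{\alpha,\rho}$. Yet each of these elements is $\le p^*_\beta\le p^*$ for a suitable $\beta$, which is a contradiction.

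\textbf{Proving the claim.} Suppose that for every $\rho$ the set $E_\rho$ is bounded by some $\alpha_\rho<\rho$, and put $f(\rho)=\alpha_\rho$. Since the scale is cofinal and $I$ is unbounded in $\mu$, there is $\beta\in I$ with $f<_{\mathcal{I}}f_\beta$ (take $f_{\beta'}$ above $f+1$ and any $\beta\ge\beta'$ in $I$, using that the scale is increasing). Then $\{\rho\mid f(\rho)\ge f_\beta(\rho)\}\in\mathcal{I}$ while $I_\beta\in\mathcal{I}^+$. Hence there is $\rho\in I_\beta$ with $f_\beta(\rho)>f(\rho)=\alpha_\rho$. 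But $\beta\in I$ and $\rho\in I_\beta$ give $f_\beta(\rho)\in E_\rho$, so $f_\beta(\rho)<\alpha_\rho$, a contradiction.

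\textbf{Main obstacle.} The delicate point is the bookkeeping in the last step: the exceptional set where $f\not<f_\beta$ lies in $\mathcal{I}$, so it cannot contain the $\mathcal{I}$-positive set $I_\beta$, and this is exactly where non-cohesiveness is used. Note that if $\mathcal{I}\supseteq\mathcal{J}_{bd}$ this is automatic. Otherwise one still only needs that $I_\beta$ is $\mathcal{I}$-positive, which it is, so the argument goes through for an arbitrary ideal $\mathcal{I}$.
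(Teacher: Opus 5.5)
Your proof is correct and takes essentially the same approach as the paper. You fix a scale in $\prod B/\mathcal{I}$, use non-cohesiveness to get bounds $p^*_\beta$ on $\mathcal{I}$-positive sets $I_\beta$, and reach a contradiction by dominating the function $\rho\mapsto\alpha_\rho$ with some $f_\beta$, $\beta\in I$. Your strict bounds $\alpha_\rho$ together with $f<_{\mathcal{I}}f_\beta$ repair the paper's non-strict final inequality, and you spell out the closing contradiction via $|E_\rho|=\rho$, which the paper leaves implicit. The preliminary assumption that $\mathcal{I}$ contains the bounded subsets of $B$ is never used; only the positivity of $I_\beta$ is, as you note yourself. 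So drop it rather than call it implicit in the statement.
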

\begin{proof}
The proof is a generalization of \ref{Main tool}.
    Let $\l f_\beta\mid \beta<\mu\r$ be a scale in $\prod B/\mathcal{I}$. and $\l p^b_\alpha\mid \alpha<b\r$ witness $\mathbb{P}$ is $b\in Sp_T(\mathbb{P})$. For every $\beta<\mu$, consider $\{p^b_{f_\beta(b)}\mid b\in B\}$. Since $\mathbb{P}$ is note $\mathcal{I}$-cohesive There is $I_\beta\in \mathcal{I}^+$ and $p^*_\beta$ such that for every $b\in I_\beta$, $p^b_{f_\beta(b)}\leq p^*_\beta$ bounding things. We claim that $\l p^*_\beta\mid \beta<\mu\r$ witnesses that $\mu\in Sp_T(\mathbb{P})$. Otherwise, let $J\in [\mu]^\mu$ and $p^*$ be such that for every $\beta\in J$, $p^*_\beta\leq p^*$. 
    
    As before, we claim that there is $b\in B$ such that $E_b=\{f_\beta(b)\mid \beta\in J, \ b\in I_\beta\}$ is unbounded. Otherwise, suppose that $\sup E_b\leq g(b)<b$. Since $\langle f_\beta\mid \beta\in J\r$ is dominating, there is $f_\beta$ for some $\beta\in J$ such that $g\leq_{\mathcal{I}} f_\beta$. Since $I_\beta\in \mathcal{I}^+$, there is $b\in I_\beta$ such that $g(\beta)\leq f_\beta(b)$. We conclude that $f_\beta(b)<f_\beta(b)$, contradiction. 
    \subsection{Further Directions and Open Problems}
    The following question seek for $\mathsf{ZFC}$ theorem:
    \begin{question}
        Suppose that $\lambda$ is a singular cardinal such that $cf(\lambda)\notin Sp_T(\mathbb{P})$ and $Sp_T(\mathbb{P})$ is unbounded in $\lambda$. Must $\lambda^+$ be a member of $Sp_T(\mathbb{P})$?
    \end{question}
    \begin{question}
        Given an ideal $\mathcal{I}$. Is there a poset $\mathbb{P}_{\mathcal{I}}$ such that for every directed set $\mathbb{Q}$, $\mathbb{P}_{\mathcal{I}}\leq \mathbb{Q}$ if and only $\mathbb{Q}$ is $\mathcal{I}$-cohesive. 
    \end{question}
    \begin{question}
        Is $Sp^*_T(\mathbb{P})=Sp_T(\mathbb{P})$?
    \end{question}
    \begin{question}
        Is $\sup(Sp^*_T(\mathcal{U}))=\chi(\mathcal{U})$?
    \end{question}
    \subsection*{Aknowledgemnts}
    The author would like to thank James Cummings for kindly explaining to him some relevant concepts in PCF Theory. Also to the members of the Univeristy of Toronto 
\end{proof}
\bibliographystyle{amsplain}
\bibliography{ref}

\end{document}